\DeclareFontFamily{U}{dutchcal}{\skewchar\font=45 }
\DeclareFontShape{U}{dutchcal}{m}{n}{<-> s*[1.0] dutchcal-r}{}
\DeclareFontShape{U}{dutchcal}{b}{n}{<-> s*[1.0] dutchcal-b}{}
\DeclareMathAlphabet{\mathlcal}{U}{dutchcal}{m}{n}
\SetMathAlphabet{\mathlcal}{bold}{U}{dutchcal}{b}{n}
\theoremstyle{plain}
\newtheorem{theorem}{Theorem}[section]
\newtheorem{lemma}[theorem]{Lemma}
\newtheorem{proposition}[theorem]{Proposition}
\newtheorem{proposition*}{Proposition}
\newtheorem{corollary}[theorem]{Corollary}
\theoremstyle{definition}
\newtheorem{example}{Example}[section]
\newtheorem*{example*}{Example}
\theoremstyle{remark}
\newtheorem{remark}{Remark}
\numberwithin{equation}{section}
\newcommand\thickbar[1]{\accentset{\rule{.4em}{.5pt}}{#1}}
\DeclareMathOperator{\N}{\mathbb{N}}
\DeclareMathOperator{\Z}{\mathbb{Z}}
\DeclareMathOperator{\R}{\mathbb{R}}
\DeclareMathOperator{\Gr}{\Gamma}
\DeclareMathOperator{\tht}{\upvartheta}
\DeclareMathOperator{\Om}{\Upomega}
\DeclareMathOperator{\F}{\mathscr{F}}
\DeclareMathOperator{\p}{\mathbbm{P}}
\DeclareMathOperator{\E}{\mathbbm{E}}
\DeclareMathOperator{\g}{\mathfrak{g}}
\DeclareMathOperator{\valg}{\mathfrak{v}}
\DeclareMathOperator{\bull}{{\scriptscriptstyle\bullet}}
\DeclareMathOperator{\ab}{\operatorname{ab}}
\DeclareMathOperator{\GHto}{\xrightarrow{\text{GH}}}
\DeclareMathOperator{\tor}{\operatorname{tor}}
\title[Shape of subadditve processes on groups]{Asymptotic shape for subadditve processes on groups of polynomial growth}
\author[C. F. Coletti \and
L. R. de Lima]
{Cristian F. Coletti \and
Lucas R. de Lima}
\address{Centro de Matem\'atica, Computa\c{c}\~ao e Cogni\c{c}\~ao, Universidade Federal do ABC\\
Av. dos Estados, 5001\\
09210-580 Santo Andr\'e, S\~ao Paulo\\
Brazil.}
\email{cristian.coletti@ufabc.edu.br}
\email{lucas.roberto@ufabc.edu.br}
\thanks{{\bf Funding:} Research supported by grants \#2017/10555-0 and \#2019/19056-2, S\~ao Paulo Research Foundation (FAPESP)}
\keywords{Subadditive cocycle, shape theorem, random growth, groups, Cayley graphs}
\subjclass[2020]{Primary: 52A22, 60F15; Secondary: 60K35}
\begin{document}

\begin{abstract}
    This study delves into the exploration of the limiting shape theorem for subadditive processes on finitely generated groups with polynomial growth, commonly referred to as virtually nilpotent groups. Investigating the algebraic structures underlying these processes, we present a generalized form of the asymptotic shape theorem within this framework. Extending subadditive ergodic theory in this context, we consider processes which exhibit both at most and at least linear random growth. We conclude with applications and illustrative examples.
\end{abstract}

\maketitle


\section{Introduction}

The investigation of the asymptotic shape for subadditive processes on groups with polynomial growth, often synonymous with virtually nilpotent groups, has recently gained significant attention in the mathematical community. This is in part due to the fact that the usage of subadditive ergodic theorems for the limiting shape relies on vertex-transitive properties that are natural for group actions. Typically, these actions involve translations of the underlying space, providing motivation for the investigation of random processes defined on groups. Our study brings to light the algebraic structures inherent in a class of subadditive processes, offering a generalization beyond the fundamental settings of previously studied models.

The findings presented in this paper hold the potential to deepen our comprehension of various mathematical and scientific phenomena. For instance, they could be instrumental in exploring the geometry of random surfaces or modeling the propagation of information or diseases through networks. The techniques used in this paper could also be applied to other types of random processes on graphs or manifolds. 

Benjamini and Tessera \cite{benjamini2015} were the first to establish an asymptotic shape theorem for First-Passage Percolation (FPP) models on finitely generated groups of subexponential growth with i.i.d. random variables having finite exponential moments. Recently, Auffinger and Gorski \cite{auffinger2023} demonstrated a converse result, revealing that a Carnot-Carath\'eodory metric on the associated graded nilpotent Lie group serves as the scaling limit for certain FPP models on a Cayley graph under specified conditions. Broadening the investigation, Cantrell and Furman \cite{cantrell2017} explored the $L^\infty$ limiting shape for subadditive random processes on groups of polynomial growth. From a probabilistic standpoint, there is considerable interest in relaxing the almost-surely bi-Lipschitz condition imposed by $L^\infty$. Here, we modify this hypothesis by replacing it with $L^1$ conditions and introducing hypotheses for at least and at most linear growth. The implications and applicability of this new result are illustrated through examples presented at the end of the article. Notably, we enhance our previous result from \cite{coletti2021} on a limiting shape theorem obtained for the Frog Model, now extended to a broader class of non-abelian groups.

Addressing this challenge is primarily approached through the utilization of techniques from metric geometry and geometric group theory.
The existence of the limiting shape can be viewed as an extension of Pansu's theorem to random metrics. The primary strategy involves considering the subadditive cocycle determining a pseudo-quasi-random metric, with the standard case on $\Z^D$ and $\R^D$ extensively covered in the literature (see, for instance, \cite{bjoerklund2010, boivin1990}).

We describe the process and the obtained theorem below, more detailed definitions can be found in the next section.

\subsection*{Basic description and main results}

Let $(\Om,\F,\p)$ be a probability space and $(\Gr,.)$ a finitely generated group with polynomial growth rate. Set  $\tht: \Gr \curvearrowright (\Om,\F,\p)$ to be a $\p$-preserving (p.m.p.) 
ergodic group action. Consider the family $\{c(x)\}_{x \in \Gr}$ of non-negative random variables such that, $\p$-a.s.,
\begin{equation} \label{eq:subadditivity}
    {c}(xy) \leq c(y) + c(x)\circ\tht_y
\end{equation}

Write $c(x,\omega)$ for $c(x)(\omega)$ and let $z\cdot\omega:=\tht_z (\omega)$. A function $c:\Gr\times\Om\to \R_{\geq 0}$ satisfying \eqref{eq:subadditivity} is referred to as a \emph{subadditive cocycle}. Once given a subadditive cocycle $c$, there is a correspondent random pseudo-quasi metric $d_\omega$ defined by
\[
    d_{z\cdot\omega}(x,y):= \big(c(yx^{-1})\circ\tht_{x}\big)(z\cdot \omega),
\]
which is $\Gr$-right equivariant, \emph{i.e.}, for all $x,y,z \in \Gr$, and for every $ \omega \in \Om$,
\[
    d_\omega(x,y) = d_{z\cdot\omega}(xz^{-1},yz^{-1}).
\]
The correspondence is one-to-one since given a $\Gr$-right equivariant random pseudoquasimetric $d_\omega$, one can easily verify that
\begin{equation} \label{eq:cocycle.metric}
    c(x,\omega):= d_\omega(e, x)
\end{equation}
is a subadditive cocycle.

To avoid dealing with unnecessary technicalities, we initially consider $\Gr$ as the group of polynomial growth, which is nilpotent and torsion-free. Later, we address the more general case where $\Gr$ is virtually nilpotent. The essential definitions and notation are introduced as we proceed with the text. The group will be associated with a finite symmetric generating set $S \subseteq \Gr$. We write $\|-\|_S$ and $d_S$ for a word length and a word metric, respectively. The following conditions will be needed throughout the paper. We assume the existence of $\upbeta>0$ and $\upkappa>1$ such that, for all $x \in \Gr$,%
\begin{equation} \label{all} \tag{\sc i}
    \p\big( c(x) \geq  t \big) \leq {g}(t) \quad \text{for all } t >\upbeta\|x\|_S
\end{equation}
where $g(t) \in \mathcal{O}\left(1/t^{2D+\upkappa}\right)$ as $t \uparrow +\infty$. 

Let $[\Gr,\Gr]$ be the commutator subgroup of $\Gr$ and set $\|x\|_S^{\ab} := \inf_{y \in x[\Gr,\Gr]}\|y\|_S$. Suppose that there exists $a >0$ such that, for all $x \in \Gr \setminus [\Gr,\Gr]$ there is a sequence $\{n_j\}_{j\in \N}$ of positive integers depending on $x[\Gr,\Gr]$ with $\lim_{j \uparrow + \infty} n_j = +\infty$ and, for all $y \in x[\Gr,\Gr]$ and every $j \in \N$,
\begin{equation} \label{aml} \tag{\sc ii}
    a \|y^{n_j}\|_S^{\ab} \leq \E\left[c\left(y^{n_j}\right)\right].
\end{equation}
We say that the process grows \emph{at least linearly} when condition \eqref{all} is satisfied. Condition \eqref{aml} provides a lower bound for the norm of the rescaled process $\phi$, which will be defined later.

To obtain the asymptotic result, we will introduce an \textit{innerness assumption}. Specifically, for each $\upvarepsilon>0$, we require the existence of a finite generating set $F(\upvarepsilon) \subseteq \Gr\setminus[\Gr,\Gr]$ such that, for $\p$-\textit{a.s.} $\omega \in \Om$ and for every $x \in \Gr$, we can write $x = z_nz_{n-1}\dots z_1$  with $z_n,z_{n-1},\dots, z_1 \in F(\upvarepsilon)$ satisfying
\begin{equation} \label{innerness} \tag{\sc iii}
    \sum_{i=1}^n c(z_i,{z_{i-1}\dots z_1}\cdot\omega) \leq (1+\upvarepsilon)c(x,\omega).
\end{equation}
When considering First-Passage Percolation models where $S \subseteq \Gr \setminus [\Gr,\Gr]$, condition \eqref{innerness} is automatically fulfilled (see \cref{sec:fpp}). Additionally, in the case where $\Gr$ is abelian, we can eliminate the need for hypothesis \eqref{innerness} in the main theorem altogether.

\begin{theorem}[Limiting Shape for Torsion-Free Nilpotent Groups] \label{shape.thm}
    Let $(\Gr,.)$ be a torsion-free nilpotent finitely generated group with polynomial growth rate $D \geq 1$ and torsion-free abelianization. Consider $c:\Gr\times\Om \to \R_{\geq0}$ to be a subadditive cocycle associated with $d_\omega$ and a p.m.p. ergodic group action $\tht$. 
    
    Suppose that conditions \eqref{all}, \eqref{aml}, and \eqref{innerness} are satisfied for a finite symmetric generating set $S \subseteq \Gr$. Then
    \begin{equation} \label{eq:GH:conv.THM}
        \quad \quad \left(\Gr,\frac{1}{n}d_\omega,e\right) \GHto \left(G_\infty,d_\phi,\mathlcal{e}\right) \quad\quad \p\text{-a.s.}
    \end{equation}
    where $G_\infty$ is a simply connected graded Lie group, and $d_\phi$ is a quasimetric homogeneous with respect to a family of homotheties $\{\delta_t\}_{t>0}$. Moreover, $d_\phi$ is bi-Lipschitz equivalent to $d_\infty$ on $G_\infty$.
    
     In addition, if $\Gr$ is abelian, then \eqref{eq:GH:conv.THM} remains true even when condition \eqref{innerness} is not valid.
\end{theorem}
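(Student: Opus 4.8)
The plan is to realize the convergence \eqref{eq:GH:conv.THM} as a random version of Pansu's theorem, obtained by combining two one-sided estimates for the rescaled cocycle. First I would pass, via the standard Malcev/associated-graded construction, from $(\Gr,.)$ to its graded Lie group $G_\infty$ with homotheties $\{\delta_t\}_{t>0}$, so that $(\Gr,\tfrac1n d_S,e)\GHto(G_\infty,d_\infty,\mathlcal e)$ is already known (this is Pansu's theorem); the task is to replace $d_S$ by the random $d_\omega$ and identify the limit quasimetric $d_\phi$. To that end I would fix a direction $x\in\Gr$ and study $c(x^n,\omega)/n$. Condition \eqref{all} gives, through a Borel–Cantelli argument along the subsequence $x^n$ together with the polynomial bound $g(t)\in\mathcal O(1/t^{2D+\upkappa})$ (the exponent $2D+\upkappa$ being exactly what makes the sums over the ball of radius $n$ in $\Gr$, which has $\asymp n^D$ elements, summable), an almost-sure \emph{at most linear} bound $\limsup_n c(x^n,\omega)/n\le C\|x\|_S$. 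The cocycle relation \eqref{eq:subadditivity} makes $n\mapsto c(x^n,\cdot)$ a subadditive sequence for the p.m.p.\ ergodic transformation $\tht_x$, so Kingman's subadditive ergodic theorem yields an almost-sure and $L^1$ limit $\phi(x):=\lim_n c(x^n,\omega)/n=\lim_n \E[c(x^n)]/n$, a deterministic, $\Gr$-conjugation-invariant, subadditive, homogeneous function on $\Gr$.

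Next I would show $\phi$ descends to a well-defined, bi-Lipschitz-to-$d_\infty$ quasinorm on $G_\infty$. Subadditivity and \eqref{all} give the upper bound $\phi(x)\le C\|x\|_S$, hence $\phi$ factors through the bounded-distortion comparison with the word metric; homogeneity $\phi(x^n)=n\phi(x)$ plus conjugation-invariance let it pass to the abelianization and then, inductively up the lower central series, to the graded group, giving a $\delta_t$-homogeneous $d_\phi(g,h):=\phi_\infty(g^{-1}h)$ on $G_\infty$. For the matching lower bound I would use \eqref{aml}: along the sequence $n_j$, $a\|y^{n_j}\|_S^{\ab}\le\E[c(y^{n_j})]$, and dividing by $n_j$ and letting $j\to\infty$ shows $\phi(x)\ge a\,\ell(x)$ where $\ell$ is (the stable norm of) the abelianized word length; this is what prevents $d_\phi$ from degenerating and, combined with the homogeneity, upgrades to the full bi-Lipschitz equivalence $d_\phi\asymp d_\infty$. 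The innerness assumption \eqref{innerness} enters precisely here: to propagate the one-point-direction estimate $c(x^n,\omega)\approx n\phi(x)$ into a \emph{uniform-over-the-ball} estimate $d_\omega(e,x)\approx d_\phi(\mathlcal e, \pi_n(x))$ needed for Gromov–Hausdorff convergence, one decomposes an arbitrary $x$ as a product $z_n\cdots z_1$ of generators in $F(\upvarepsilon)$ with additive cost at most $(1+\upvarepsilon)c(x,\omega)$, estimates each factor by the already-established directional limit, and sums; the triangle-type counting then gives $\tfrac1n d_\omega(e,\cdot)\to d_\phi$ uniformly on $\tfrac1n$-balls, which is the definition of the pointed GH convergence.

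For the final clause, when $\Gr$ is abelian the generating set $S$ itself lies in $\Gr\setminus[\Gr,\Gr]=\Gr$, every element is a bounded-length word in $S$, and the decomposition required by \eqref{innerness} is produced for free by writing $x=s_k\cdots s_1$ along a geodesic word and using subadditivity \eqref{eq:subadditivity} directly — no extra hypothesis is needed because the abelianization map is the identity, so \eqref{aml} already controls $\phi$ on all directions and the uniformization step does not require choosing an $\upvarepsilon$-dependent generating set. I expect the main obstacle to be the uniformization step: turning the direction-by-direction ergodic limits into a single almost-sure statement valid simultaneously for all $x\in\Gr$ and controlling the error uniformly on balls of radius $n$ as $n\to\infty$. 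This is where the polynomial growth of $\Gr$, the summability built into \eqref{all}, and the innerness decomposition \eqref{innerness} all have to be balanced against each other; handling the non-abelian grading (so that the naive additive estimate along a word survives passing to the graded homothety scaling) is the technically delicate point, and is exactly why the hypothesis on the torsion-free abelianization and the structure of $F(\upvarepsilon)\subseteq\Gr\setminus[\Gr,\Gr]$ are imposed.
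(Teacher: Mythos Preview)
Your outline has the right architecture (Kingman for $\phi$, bi-Lipschitz bounds from \eqref{all}--\eqref{aml}, then uniformize via \eqref{innerness}), but two central steps do not work as you describe them. First, $\phi$ lives only on $\g_\infty^{\ab}\cong\mathfrak{v}_1$, and $d_\phi$ is \emph{not} obtained by ``conjugation-invariance and inducting up the lower central series''; it is the Carnot--Carath\'eodory metric $d_\phi(\mathlcal{g},\mathlcal{h})=\inf_\upgamma\int\phi(\upgamma')\,dt$ over admissible curves (those tangent to $\mathfrak{v}_1$). The bridge between the cocycle on $\Gr$ and distances in $G_\infty$ is then the polygonal approximation of such curves (\cref{polygonal.path}) together with an extension of the ergodic theorem from single powers $x^n$ to translated products $c(y_j^n)\circ\tht_{y_{j-1}^n\cdots y_1^n}$ (\cref{prop:ergodic.thm.path}); neither ingredient appears in your plan.

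Second---and this is the real gap---your use of \eqref{innerness} does not produce the lower bound. You propose to ``estimate each factor by the already-established directional limit'', but the factors $c(z_i,\cdot)$ with $z_i\in F(\upvarepsilon)$ are single increments drawn from a \emph{fixed} finite set, not powers $z_i^n$, so they are not individually comparable to any $\phi(z_i^{\ab})$. What \eqref{innerness} gives is $c(x,\omega)\ge\frac{1}{1+\upvarepsilon}\sum_i c(z_i,\cdot)$, and the difficulty is to bound this sum from below by $\Phi(1\bull x)$. The paper's argument (\cref{prop:asymptotic.approx}) is a competitor-curve construction: the decomposition $x=z_{n,k_n}\cdots z_{n,1}$ determines a piecewise-$d_\infty$-geodesic curve $\zeta_n$ in $G_\infty$; an Arzel\`a--Ascoli compactness (using a Lipschitz bound derived from \eqref{all} and the positivity of $\phi$) extracts a limiting curve $\zeta$ from $\mathlcal{e}$ to $\mathlcal{g}$; one then re-applies the polygonal approximation to $\zeta$ with new elements $\{w_i\}$ and invokes the translated ergodic theorem to obtain $\tfrac{1}{n}\sum c(z_{n,i},\cdot)\gtrsim\ell_\phi(\zeta)\ge d_\phi(\mathlcal{e},\mathlcal{g})$. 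Finally, your abelian-case reasoning is incorrect (elements of $\Z^D$ are certainly not bounded-length words); the actual reason \eqref{innerness} is unnecessary there is that commutativity collapses the polygonal path, $y_k^n\cdots y_1^n=(y_k\cdots y_1)^n$, so the ordinary subadditive ergodic theorem already delivers both inequalities directly (see the proof of \cref{lm:lim.phi.g}).
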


The limit space $G_\infty$ is also known as a Carnot group and $d_\infty$ coincides with the Carnot-Carath\'eodory metric obtained by the asymptotic cone of $\Gr$ as the limit of $\frac{1}{n}d_S$. More details about its construction and properties will be given in \cref{basic:def} along with the definitions of $\delta_t$ and $d_\phi$. The usage of the pointed Gromov-Hausdorff convergence arises naturally from its correspondence with geometric group theory.

Let now $(\Gr,.)$ be a finitely generated group with polynomial growth rate. Gromov's Theorem \cite{gromov1981} establishes the equivalence of polynomial growth and virtual nilpotency in finitely generated groups. Then there exists a normal nilpotent subgroup $N \unlhd \Gr$ with finite index $\kappa:=[\Gr:N] < + \infty$. Set $\tor N$ to be the torsion subgroup of $N$ and define \[\Gr' :=N/\tor N.\]

Pansu \cite{pansu1983} showed that $\Gr$ and $\Gr'$ share the same asymptotic cone. Let us fix $z_{(j)}$ as a representative of the coset $N_{(j)}= z_{(j)}N$ such that $\Gr = \bigcup_{j=1}^{\kappa} N_{(j)}$. Consider $z_{(j)}=e$ when $N_{(j)}=N$.  Set $\uppi_N :\Gr \to N$ to be given by $\uppi_N( x ) = z_{(j)}^{-1}x$ for $x \in N_{(j)}$. Define now $\llbracket - \rrbracket : \Gr \to \Gr'$ to be given by
\[\llbracket x \rrbracket := \uppi_N( x).\tor N.\]

To refine the first main theorem, let us introduce some new conditions. Suppose that there exists $a >0$ such that, for all $x \in \Gr$ there is a sequence $\{n_j\}_{j\in \N}$ of positive integers depending on $\llbracket x \rrbracket. [\Gr',\Gr']$ with $n_j \uparrow +\infty$ as $j \uparrow + \infty$,
\begin{equation} \label{aml2} \tag{\sc ii$^\prime$}
    a \|x^{n_j}\|_S \leq \E\left[c\left(x^{n_j}\right)\right].
\end{equation}

Let $c':\Gr'\times\Om \to \R_{\geq 0}$ by 
\begin{equation} \label{eq:def.c.prime}
    c'\big(\llbracket x \rrbracket\big) := \max_{\substack{y \in \llbracket x \rrbracket\\ z \in \tor N}} c(y)\circ\tht_z.
\end{equation}

Fix, for each $\llbracket x\rrbracket \in \Gr'$, a $\upupsilon_x \in \llbracket x \rrbracket$ and consider $\theta:\Gr' \curvearrowright (\Om,\F,\p)$ given by $\theta_{\llbracket x\rrbracket} \equiv \tht_{\upupsilon_x}$ and $\theta_z(\omega) = z\ast\omega$ (see Sec. \ref{sec:virt.nilpotent.proofs} and \cref{rmk:c.prime} for a detailed discussion). We consider a similar \textit{innerness assumption} to replace \eqref{innerness}. Suppose that, for each $\upvarepsilon>0$, there exists a finite $F(\upvarepsilon) \subseteq N\setminus[N,N]$ which is a generating set of $\Gr'$ such that, $\p$-\textit{a.s.}, for every $x \in \Gr$, one can write $\llbracket x \rrbracket = z_nz_{n-1}\dots z_1$  with $z_n,z_{n-1},\dots, z_1 \in F(\upvarepsilon)$ satisfying
\begin{equation} \label{innerness2} \tag{\sc iii$^\prime$}
    \sum_{i=1}^n c'(z_i,~{z_{i-1}\dots z_1}\ast\omega) \leq (1+\upvarepsilon)c'\big(\llbracket x \rrbracket,~\omega\big).
\end{equation}

Similar to \eqref{innerness}, First-Passage Percolation models satisfy \eqref{innerness2} under specific conditions. In the case where $\Gr=N$ is nilpotent, it suffices to have $S \subseteq N \setminus \big([N,N] \cup \tor N \big)$ for an FPP model to satisfy \eqref{innerness2}. The virtually nilpotent case is treated separately in \cref{sec:additional.FPP} with additional conditions imposed on $\llbracket S \rrbracket$ and $\tht$. Moreover, when $\Gr'$ is abelian, hypothesis \eqref{innerness2} is not required to verify the theorem below.

\begin{theorem}[Limiting Shape for Groups with Polynomial Growth] \label{thm:shape.polynomial}
    Let $(\Gr,.)$ be a finitely generated group with polynomial growth rate $D \geq 1$ and $\Gr'/[\Gr',\Gr']$ torsion-free. Consider $c:\Gr\times\Om \to \R_{\geq0}$ to be a subadditive cocycle associated with $d_\omega$ and a p.m.p. ergodic group action $\tht$. 
    
    Suppose that conditions \eqref{all}, \eqref{aml2}, and \eqref{innerness2} are satisfied for a finite symmetric generating set $S \subseteq \Gr$ is so that $\llbracket S \rrbracket$ generates $\Gr'$. Then
    \begin{equation} \label{eq:GH:conv.THM2}
        \quad \quad \left(\Gr,\frac{1}{n}d_\omega,e\right) \GHto \left(G_\infty,d_\phi,\mathlcal{e}\right) \quad\quad \p\text{-a.s.}
    \end{equation}
    where $G_\infty$ is a simply connected graded Lie group, and $d_\phi$ is a quasimetric homogeneous with respect to a family of homotheties $\{\delta_t\}_{t>0}$. Moreover, $d_\phi$ is bi-Lipschitz equivalent to $d_\infty$ on $G_\infty$.
    
     Furthermore, if $\Gr'$ is abelian, then \eqref{eq:GH:conv.THM2} remains true even when condition \eqref{innerness2} is not valid.
\end{theorem}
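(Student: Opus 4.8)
The plan is to reduce Theorem~\ref{thm:shape.polynomial} to Theorem~\ref{shape.thm} applied to the torsion-free nilpotent group $\Gr'$ equipped with the subadditive cocycle $c'$ and the action $\theta$. First I would verify that $c'$, as defined in \eqref{eq:def.c.prime}, is genuinely a subadditive cocycle over $\theta:\Gr'\curvearrowright(\Om,\F,\p)$: the $\p$-preserving and ergodic properties of $\theta$ must be checked from those of $\tht$ together with the finite-index structure (the map $z\ast\omega$ on $\tor N$ and the coset representatives $\upupsilon_x$), and the subadditivity inequality $c'(\llbracket x\rrbracket\llbracket y\rrbracket)\le c'(\llbracket y\rrbracket)+c'(\llbracket x\rrbracket)\circ\theta_{\llbracket y\rrbracket}$ follows from \eqref{eq:subadditivity} by maximizing over the finitely many lifts, using that $\tor N$ is finite and normal. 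This is mostly bookkeeping but needs care because $\llbracket-\rrbracket$ is not a homomorphism on all of $\Gr$ — only after passing through $\uppi_N$ — so I would restrict attention to how $c'$ sees the coset structure and invoke \cref{rmk:c.prime}.

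Next I would translate the hypotheses. Condition \eqref{all} for $c$ transfers to an analogous tail bound for $c'$: since $c'(\llbracket x\rrbracket)$ is a maximum of $\kappa\cdot|\tor N|$ terms each of the form $c(y)\circ\tht_z$ with $\|y\|_S$ comparable to $\|x\|_S$ up to an additive constant (the diameter of the finite sets involved), a union bound keeps the tail in $\mathcal{O}(1/t^{2D+\upkappa})$ after adjusting $\upbeta$; crucially the polynomial growth degree $D$ is unchanged because $\Gr'$ and $\Gr$ share the same growth rate (Pansu). Condition \eqref{aml2} is precisely the hypothesis needed to produce \eqref{aml} for $c'$ on $\Gr'$: for $\llbracket x\rrbracket\notin[\Gr',\Gr']$ one has $\|x^{n_j}\|_S$ comparable to $\|\llbracket x\rrbracket^{n_j}\|_S^{\ab}$ (both measured after abelianization, where the torsion and finite-index corrections are bounded), so $a\|\llbracket x\rrbracket^{n_j}\|_S^{\ab}\le \E[c(x^{n_j})]\le \E[c'(\llbracket x\rrbracket^{n_j})]$ up to a constant absorbed into $a$. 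Finally \eqref{innerness2} is by construction exactly \eqref{innerness} for the pair $(c',\theta)$ on $\Gr'$ with the generating set $F(\upvarepsilon)\subseteq N\setminus[N,N]$, which generates $\Gr'$.

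With all hypotheses of Theorem~\ref{shape.thm} verified for $(\Gr',c',\theta)$, that theorem yields $\bigl(\Gr',\frac1n d'_\omega,e\bigr)\GHto(G_\infty,d_\phi,\mathlcal{e})$ $\p$-a.s., with $G_\infty$ the Carnot group associated to $\Gr'$ — which, again by Pansu, is the same simply connected graded Lie group associated to $\Gr$ — and $d_\phi$ bi-Lipschitz to $d_\infty$. It then remains to compare $\bigl(\Gr,\frac1n d_\omega,e\bigr)$ with $\bigl(\Gr',\frac1n d'_\omega,e\bigr)$ in the pointed Gromov-Hausdorff sense. The map $\Gr\to\Gr'$, $x\mapsto\llbracket x\rrbracket$, is a surjection whose fibers have size $|\tor N|\cdot\kappa$ bounded, and it distorts word length by at most an additive constant; moreover $d'_\omega$ dominates $d_\omega$ up to the same additive correction by the definition of $c'$ as a maximum over lifts, while a matching lower bound follows because every geodesic-like path for $c$ projects to an admissible path for $c'$. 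After rescaling by $1/n$ these additive errors vanish, so the two pointed spaces have $\GHto$-limits that coincide; one formalizes this via an $\varepsilon$-isometry whose defect is $O(1/n)$.

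The main obstacle I expect is the verification that $c'$ is a bona fide subadditive cocycle for a p.m.p.\ ergodic action $\theta$ on the \emph{same} probability space — in particular pinning down the action $\theta_z(\omega)=z\ast\omega$ on the torsion part and checking ergodicity survives, since a priori passing to a finite-index subgroup and quotienting by torsion can break ergodicity or force one to work with an induced action on an enlarged space. I anticipate that \cref{rmk:c.prime} and the discussion in \cref{sec:virt.nilpotent.proofs} handle exactly this point, e.g.\ by noting that the $\Gr$-action restricted to $N$ is still ergodic (or by decomposing into ergodic components and arguing the shape is a.s.\ constant across them), and that the maximum in \eqref{eq:def.c.prime} is designed precisely to make $c'$ well-defined on cosets independently of the chosen representatives $\upupsilon_x$. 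The remaining comparison estimates are routine once the additive-distortion bounds between $\|-\|_S$, $\|-\|_S^{\ab}$ on $\Gr$ and their counterparts on $\Gr'$ are made explicit.
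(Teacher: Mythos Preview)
Your approach matches the paper's: reduce to Theorem~\ref{shape.thm} for $(\Gr',c',\theta)$, then transfer the conclusion back to $(\Gr,c,\tht)$ via a comparison of $c$ and $c'$. Two points in your sketch need sharpening relative to the paper's execution. First, the restriction $\tht\big|_N$ is \emph{not} in general ergodic on $(\Om,\F,\p)$, so one cannot apply Theorem~\ref{shape.thm} to $c'$ directly on the original space; the paper handles this by decomposing into finitely many $N$-ergodic components $B\in\mathfrak{B}_N$ (\cref{lm:erg.finite.index,lm:erg.torsion.quotient}), applying Theorem~\ref{shape.thm} to $c'$ on each $\big([B],\F_B',\p_B'\big)$ separately, and then patching --- exactly the alternative you mention in passing. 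Second, your claim that $d'_\omega$ and $d_\omega$ differ by ``the same additive correction'' as the word lengths is incorrect: the difference $\big|c(x)-c'(\llbracket x\rrbracket)\big|$ involves the random quantities $c(z_{(j)}^{\pm1})\circ\tht_\bullet$ and maxima of $c(y)\circ\tht_z$ over $y\in\llbracket x\rrbracket$, $z\in\tor N$, none of which are bounded by a deterministic constant. The paper controls these via Borel--Cantelli arguments from \eqref{all} (\cref{lm.T.bounded.nilpotent.as,lm.T.bounded.torsion.as}), obtaining that the error is $<\varepsilon n$ $\p$-a.s.\ uniformly over $B_S(e,rn)$ for $n$ large (\cref{prop:asympt.equiv.c.cPrime}); it then re-runs the two-point convergence argument from the proof of Theorem~\ref{shape.thm} with this comparison inserted at each step, rather than invoking an abstract $\varepsilon$-isometry between the rescaled spaces.
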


The primary technique employed in this work involves the approximation of admissible curves through the use of polygonal paths and ergodic theory. In \cref{sec:limiting.shape}, we introduce and delve into these tools, presenting their application in proving the theorems and a corollary for FPP models. \cref{sec:examples} showcases examples dedicated to illustrating the applicability of the theorems.

\section{Auxiliary Theory and Methodological Framework} \label{basic:def}

In this section, we delve into the fundamental concepts of geometric group theory, a field that provides tools to comprehend the relationship between algebraic properties and geometric structures. We begin by establishing the basic definitions that serve as the cornerstone for our exploration. Central to our analysis is the construction of the asymptotic cone, a powerful tool that reveals the geometric behavior of groups at infinity. To illustrate the versatility of our framework, we present concrete examples of groups that satisfy the conditions under consideration.

A key focus of our investigation lies in the construction of the norm in $G_\infty$, providing the foundation for defining the limiting shape. We explore crucial results and properties in the following subsections. For an in-depth discussion on this topic, we refer interested readers to \cite{breuillard2014,decornulier2011,decornulier2016,raghunathan1972}. This construction leverages subadditive ergodic theorems, revealing the asymptotic behavior of sequences in the group. Through this lens, we gain a deeper understanding of the interplay between algebraic properties and geometric structures.

Building on these concepts, we introduce and elaborate on First-Passage Percolation (FPP) models, serving as a suitable example for a comprehensive exploration of limiting shapes and their implications.

\subsection{Cayley graphs and volume growth}

The interplay among finitely generated groups, Cayley graphs, word metrics, and the convergence of metric spaces establishes a bridge between the algebraic properties of groups and geometric structures.

Let $(\Gr,.)$ be a group generated by a finite set $S$. The associated Cayley graph $\mathcal{C}(\Gr, S)$ represents elements of $G$ as vertices, with edges connecting $x$ and $y$ if and only if $y = sx$ for some $s \in S$. Formally, the right-invariant Cayley graph $\mathcal{C}(\Gr,S)=(V,E)$ is defined by
\[
    V= \Gr \quad \text{and} \quad E=\big\{\{x,sx\}:x \in \Gr, s \in S\big\}.
\]
Cayley graphs provide a visual representation of the group structure and are fundamental in the study of geometric group theory.

Denote by $u\sim v$ the relation $\{u,v\} \in E$. Let $\mathscr{P}(x,y)$ be the set of self-avoiding paths from $x$ to $y$, where each $\gamma \in \mathscr{P}(x,y)$ follows $\gamma=(x_0,\dots, x_m)$ with $m \in \N$, $x_i\sim x_{i+1}$, $x_0=x$, $x_m=y$, and $x_i \neq x_j$ for all $i \neq j$. We write $\mathtt{e} \in \gamma$ for $\mathtt{e} = \{x_{i}, x_{i+1}\} \in E$, and $|\gamma|=m$ represents the length of the path.

The word length on $\Gr$ with respect to $S$ is defined as follows: For any $x \in \Gr$, the length of the shortest word (or self-avoiding path) in $S$ that represents $x$ is its word length, denoted by $\|x\|_S = \inf_{\gamma\in\mathscr{P}(e, x)}\vert\gamma\vert$. The word metric $d_S$ on $\Gr$ is given by $d_S(x, y) = \|yx^{-1}\|_S$.

Throughout this article, various distinct metrics will be considered. Therefore, let us consider a (semi-pseudo-quasi) metric \(d_\diamondsuit\) on a non-empty set \(\mathbb{X}\), where the metric is indexed by \(\diamondsuit\). We define \(B_\diamondsuit(x, r) := \{y \in \mathbb{X} \colon d_\diamondsuit(x, y) < r\}\) as the open \(d_\diamondsuit\)-ball centered at \(x \in \mathbb{X}\). To streamline notation, let \(t \vee t' := \max\{t, t'\}\) and \(t \wedge t' := \min\{t, t'\}\). Here, the set $\N$ stands for $\{1,2, \dots\}$ and $\N_0=\N\cup\{0\}$.

A finitely generated group $\Gr$ has polynomial growth with respect to $S$ when $|B_S(e, r)| \in \mathcal{O}\big(n^{D'}\big)$ for a $D'\in \N_0$ as $n\uparrow+\infty$. The growth is associated with the Cayley graph $\mathcal{C}(\Gr,S)$. The polynomial growth rate of $\Gr$ is a constant $D \in \N_0$ such that there exists $\mathtt{k}\in (1,+\infty)$ for all $r>1$ satisfying \[\mathtt{k}^{-1}r^D \leq \vert B_S(e,r) \vert \leq \mathtt{k}r^D.\]
Thus $D = \min \big\{D'\in\N_0\colon \vert B_S(e,r)\vert \in \mathcal{O}\big(n^{D'}\big)\big\}$. Moreover, one can verify that the polynomial growth rate of $\mathcal{C}(\Gr,S)$ does not depend on the choice of $S$.

Recall the definition of the commutator element $[x,y] = xyx^{-1}y^{-1}$ and the subgroup $[U,V] := \big\langle [u,v] \colon u \in U, v\in V \big\rangle$ for any $U,V \subseteq \Gr$. Set $\Gr_0:=\Gr$ and let $\Gr_n := [\Gr,\Gr_{n-1}]$ for all $n \in \N$. Thus $\{\Gr_n\}_{n\in\N}$ forms a lower central series with $\Gr_n \unlhd \Gr_{n-1}$ for all $n\in\N$. The group $\Gr$ is called \textit{nilpotent} when there is an $n \in \N$ such that $\Gr_n = \{e\}$, i.e., when the sequence stabilizes in the trivial group. More specifically, $\Gr$ is nilpotent of class $n$ when $n$ is the minimal value such that $\Gr_n$ is the trivial group. A group is abelian if and only if it is nilpotent of class $1$. The abelianization of a group $\Gr$ is given by $\Gr^{\ab}\cong \Gr/[\Gr,\Gr]$.

The group $\Gr$ is called \textit{virtually nilpotent} when there exists a normal subgroup $N \unlhd \Gr$ that is nilpotent with finite index $\kappa=[\Gr\colon N] < +\infty$. A noteworthy result obtained by Gromov \cite{gromov1981} is that a finitely generated group has polynomial growth exactly when it is virtually nilpotent. Therefore, the growth established by word metrics is strongly related to algebraic properties of the group.

The \textit{torsion subgroup} of a group $H$ is denoted by $\tor H$ and it is defined as the set of all elements with finite order. In other words, $\tor H := \big\langle h\in H \colon \exists n \in \N (h^n=e) \big\rangle$. The group $H$ is called \textit{torsion-free} when $\tor H$ is the trivial group.

Let $[U]_\varepsilon$ to be the \textit{$\varepsilon$-neighborhood} of $U \subseteq \mathbb{X}$ of in a metric space $(\mathbb{X}, d_\diamondsuit)$, \textit{i.e.}, the set $[U]_\varepsilon = \bigcup_{u \in U} B_\diamondsuit(e,\varepsilon)$. The Hausdorff distance $d_H$ detects the largest variations between sets with respect to the given metric
\[d_H(U,V) := \inf\{\varepsilon>0 \colon U \subseteq [V]_\varepsilon \text{ and } V \subseteq [U]_\varepsilon\}.\]  
We define the convergence of metric spaces used in the main theorems employing the Hausdorff distance. Let $(\mathbb{X}_n, d_{\diamondsuit_n}, o_n)_{n \in \N}$ be a sequence of centered, locally compact metric spaces. Consider $\{\psi_n\}_{n \in \N}$ as a family of isometric embeddings $\psi_n : (\mathbb{X}_n, d_{\diamondsuit_n}, o_n) \to (\mathbb{X}, d_\diamondsuit, o)$.

The \textit{pointed Gromov-Hausdorff convergence} of $(\mathbb{X}_n, d_{\diamondsuit_n}, o_n)$ to $(\mathbb{X}, d_\diamondsuit, o)$ is denoted by
\[
    (\mathbb{X}_n, d_{\diamondsuit_n}, o_n) \GHto (\mathbb{X}, d_\diamondsuit, o)
\]
and it implies, for all $r>0$,
\[
    \lim_{n \uparrow +\infty} d_H \Big( \psi_n\big( B_{\diamondsuit_n}(o_n,r) \big), ~B_\diamondsuit(o,r)\Big) = 0.
\]
The definitions above are immediately extended to random semi-pseudo-quasi metrics, as employed in the main theorems. The assumption of almost sure local compactness is also maintained. We are now prepared to present Pansu's theorem on the convergence of finitely generated virtually nilpotent groups.

\begin{theorem}[Pansu \cite{pansu1983}] \label{thm:Pansu}
    Let $\Gr$ be a virtually nilpotent group generated by a symmetric and finite $S \subseteq \Gr$. Then
    \[
        \left( \Gr, \frac{1}{n}d_S, e \right) \GHto (G_\infty, d_\infty, \mathlcal{e}),
    \]
    where $G_\infty$ is a simply connected real graded Lie group (Carnot group). The metric $d_\infty$ is a right-invariant sub-Riemannian (Carnot-Caratheodory) metric which is homogeneous with respect to a family of homotheties $\{\delta_t\}_{t>0}$, \textit{i.e.}, $d_\infty\big(\delta_t(\mathlcal{g}),\delta_t(\mathlcal{h})\big) = t~d_\infty(\mathlcal{g},\mathlcal{h})$ for all $t>0$ and $\mathlcal{g},\mathlcal{h} \in G_\infty$. 
\end{theorem}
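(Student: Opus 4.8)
\textit{Sketch of the intended argument.} The plan is to follow Pansu's strategy in three stages: reduce to a torsion-free finitely generated nilpotent group, pass to its Malcev Lie group and build the graded (Carnot) companion $G_\infty$, and finally prove that the rescaled metrics converge to a Carnot--Carath\'eodory metric on $G_\infty$.

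First I would reduce to the case where $\Gr$ is torsion-free, finitely generated and nilpotent. By Gromov's theorem $\Gr$ contains a nilpotent normal subgroup $N$ of finite index; in a finitely generated nilpotent group the torsion elements form a finite (characteristic) subgroup, so $N/\tor N$ is torsion-free, finitely generated and nilpotent, and the chain of a finite-index inclusion followed by a finite-kernel projection shows that $\Gr$, $N$ and $N/\tor N$ all have the same large-scale geometry. Since the pointed Gromov--Hausdorff limit of $\big(\,\cdot\,,\tfrac1n d_S,e\big)$ depends only on the quasi-isometry class, one may assume $\Gr$ is torsion-free nilpotent. Next, by Malcev's theorem $\Gr$ embeds as a cocompact lattice (indeed a net) in a unique simply connected nilpotent real Lie group $G$ with Lie algebra $\mathfrak{g}$, and $d_S$ restricted to $\Gr$ is bi-Lipschitz to the restriction of a left-invariant Riemannian metric $d_G$; hence $\big(\Gr,\tfrac1n d_S,e\big)$ and $\big(G,\tfrac1n d_G,e\big)$ share the same GH limit if it exists. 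Let $\mathfrak{g}=\mathfrak{g}^{(1)}\supseteq\mathfrak{g}^{(2)}\supseteq\cdots\supseteq\mathfrak{g}^{(s+1)}=0$ be the lower central series and form the associated graded Lie algebra $\mathfrak{g}_\infty=\bigoplus_i \mathfrak{g}^{(i)}/\mathfrak{g}^{(i+1)}$ with its graded bracket: it is stratified, generated by its first layer, and carries the dilations $\delta_t$ acting by $t^i$ on the $i$-th layer. Let $G_\infty$ be the corresponding simply connected graded Lie group and $d_\infty$ the left-invariant Carnot--Carath\'eodory metric whose horizontal distribution is spanned by the image of a generating set of $\mathfrak{g}$ in $\mathfrak{g}^{(1)}/\mathfrak{g}^{(2)}$.

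The core is to prove $\tfrac1n d_G \GHto d_\infty$. I would combine two ingredients. For precompactness: the spaces $\big(\Gr,\tfrac1n d_S,e\big)$ have uniformly polynomial growth, with the constants $\mathtt{k}$ and $D$ independent of $n$, hence are uniformly doubling, so Gromov's compactness theorem yields subsequential pointed-GH limits, each a complete geodesic homogeneous metric space admitting a dilation inherited from composing rescalings; by the Montgomery--Zippin structure theory together with Berestovskii's characterisation, such a space is a Carnot--Carath\'eodory space on a Carnot group. For identification and uniqueness: writing group elements in exponential coordinates of the first kind, the Baker--Campbell--Hausdorff formula shows that the map sending $x$ to its rescaled coordinates $\big(\dots,n^{-i}\log_i x,\dots\big)$ intertwines $\tfrac1n d_G$ with a metric on $G$ that converges to $d_\infty$; the point is that the degree of a group element with respect to $S$ (the index $i$ with $x\in\mathfrak{g}^{(i)}\setminus\mathfrak{g}^{(i+1)}$) matches the homogeneity weight of $\delta_t$, so the lower-order BCH corrections are killed in the limit. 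Running Fekete subadditivity on $n\mapsto d_S(e,\gamma^n)$ along one-parameter subgroups produces a well-defined $\delta_t$-homogeneous quasi-norm on $G_\infty$, and a compactness/geodesic argument identifies it with the CC norm $d_\infty(\cdot,\mathlcal{e})$, pinning down the subsequential limit uniquely and hence giving convergence of the whole sequence. Right-invariance of $d_\infty$ follows because the Cayley graph used here is right-invariant, and the homogeneity $d_\infty\big(\delta_t\mathlcal{g},\delta_t\mathlcal{h}\big)=t\,d_\infty(\mathlcal{g},\mathlcal{h})$ is built into the construction.

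The main obstacle is the identification step: showing that a subsequential GH limit is \emph{unique} and is exactly a Carnot--Carath\'eodory metric rather than merely some homogeneous geodesic metric. This requires the delicate matching between the algebraic filtration and the metric --- aligning the word-growth degree with the dilation weights, controlling the non-commutative BCH corrections uniformly as $n\to\infty$, and invoking the structure theory to recognise the limit. By comparison, the reduction steps and the compactness half of the argument are routine.
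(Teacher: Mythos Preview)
The paper does not provide its own proof of this statement: Theorem~\ref{thm:Pansu} is stated with attribution to Pansu~\cite{pansu1983} and used as a black-box input for the rest of the paper (the subsequent Section~\ref{sec:asymptotic.cone} reviews the construction of $G_\infty$, $\g_\infty$, $\delta_t$, and $d_\infty$ in order to set notation, but does not attempt to reprove convergence). So there is no ``paper's own proof'' to compare your proposal against.

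That said, your sketch is a reasonable high-level outline of the classical argument and is consistent with how the paper summarises the setup: the reduction to torsion-free nilpotent via $N/\tor N$, the Mal'cev completion, the graded algebra $\g_\infty=\bigoplus_i \g^{(i)}/\g^{(i+1)}$, and the dilations $\delta_t$ acting by weight $t^i$ on the $i$-th layer all match what the paper records in Section~\ref{sec:asymptotic.cone}. Your honest flagging of the identification step as the hard part is accurate; that is indeed where Pansu's original argument (and later streamlined versions such as Breuillard's~\cite{breuillard2014}) does the real work, and a full proof would need substantially more than what you wrote there. But since the paper treats this theorem as a citation rather than a result to be proved, your level of detail already exceeds what the paper itself supplies.
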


Note that \cref{shape.thm,thm:shape.polynomial} are generalizations of the theorem above. Therefore, the shape theorems under investigation can be interpreted as the convergence of random metric spaces in large-scale geometry. The next subsection is dedicated to the construction of the asymptotic cone $G_\infty$ and related results.

\subsection{Rescaled distance and asymptotic cone} \label{sec:asymptotic.cone}

Consider for now $\Gr$  as a nilpotent and torsion-free group, unless stated otherwise. We also assume that its abelianization is torsion-free. In this subsection, we use $\Gr$ instead of $\Gr'$ to simplify notation, but we will subsequently extend the results to the more general case.

Let $G$ denote the real Mal'cev completion of $\Gr$. The group $G$ can be defined as the smallest simply connected real Lie group such that $\Gr \leq G$ and, for all $z \in \Gr$ and $n \in \N$, there exists $\mathlcal{z} \in G$ with $\mathlcal{z}^n = z$. In this case, $G$ is nilpotent of the same order of $\Gr$ and it is uniquely defined. Furthermore, $G$ is simply connected it is associated with the Lie algebra $(\g, [\cdot,\cdot]_1)$ where $\Gr$ is cocompact in $G$. We write $\log:G \to \g$ for the Lie logarithm map.

Define $\g^1 := \g$  and $\g^{i+1} := [\g,\g^i]_1$. It follows from the nilpotency of $\Gr$ that threre exists $l \in \N$ such that $\Gr_l = \{e\}$. Thus $\g^{l+1} = (0)$. Since $[\g^{i},\g^{j}]_1 \subseteq \g^{i+j}$ and, in particular, $[\g^{i+1},\g^{j}]_1,[\g^{i},\g^{j+1}]_1 \subseteq \g^{i+j+1}$, the Lie bracket on $\g$ determines a bilinear map
\[(\g^{i}/\g^{i+1}) \otimes (\g^{j}/\g^{j+1}) \longrightarrow \g^{i+j}/\g^{i+j+1}\]
which in turn defines a Lie bracket $[\cdot,\cdot]_\infty$ on

\[\g_\infty := \bigoplus\limits_{i=1}^l \valg_i~~~\mbox{with}~~ \valg_i := \g^i/\g^{i+1}.\]

Consider the decomposition $\g = V_1 \oplus \cdots \oplus V_l$ given by $\g^i:= V_i \oplus \cdots \oplus V_l$. Thus, $(\g_\infty, [\cdot,\cdot]_\infty)$ is a graded Lie algebra. Let us define a family of linear maps $\delta_t : \g_\infty \to \g_\infty$ given by
\[\delta_t(v_1+v_2+ \dots + v_l) = tv_1+t^2v_2+ \dots+ t^lv_l\]
for each $t >0$ and $v_i \in \mathfrak{v}_i$ with $i \in \{1,\dots,l\}$. It follows from the definition of $\delta_t$ that $\delta_t([u,v]_\infty) = [\delta_t(u),\delta_t(v)]_\infty$ and $\delta_{tt'} = \delta_t \circ \delta_{t'}$ for all $u,v\in \g_\infty$ and $t,t'>0$. Hence, $\{\delta_t\}_{t>0}$ defines a family of automorphisms in the graded Lie group $G_\infty := \exp_\infty\left[ \g_\infty \right]$. Here we write $\exp_\infty:\g_\infty \to G_\infty$ and $\exp:\g \to G$ to differentiate the distinct exponential maps of $\g_\infty$ and $\g$. Similarly, $\log_\infty$ and $\log$ stand for their correspondent Lie logarithm maps.

Let $\g = V_1 \oplus \cdots \oplus V_l$ be the decomposition given by $\g^i= V_i \oplus \cdots \oplus V_l$. Set $L: \g \to \g_\infty$ to be an linear map such that $L(V_i) = \valg_i$. Consider now $\sigma_t$ to be the linear automorphism on $\g$ so that $\sigma_t(v_i)=t^iv_i$ for each $v_i\in V_i$ and $i \in \{1, \dots, l\}$. Define the Lie brackets $[\cdot,\cdot]_t$ on $\g$ by
\[[v,w]_t = \sigma_{1/t}\big([\sigma_t(v),\sigma_t(w)]\big),~\mbox{ for all }t>0,\]
thus $(\g, [\cdot,\cdot]_t)$ is isomorphic to $(\g, [\cdot,\cdot]_1)$ via $\sigma_t$. Furthermore,
\[[L(v),L(w)]_\infty = \lim\limits_{t\uparrow+\infty} [v,w]_t\]
since, given $v \in V_i$ and $w \in V_j$, one has that the main term belongs to $V_{i+j}$, the other terms of superior order belong to $V_{i+j+1} \oplus \cdots \oplus V_l$ and it makes them insignificant in the rescaled limit (see \cite{breuillard2014,pansu1983} for a detailed discussion). Set
\[
	\frac{1}{t_n}\bull x_n := (\exp_\infty \circ L \circ \sigma_{1/t_n}\circ \log)(x_n).
\]
The convergence established by \cref{thm:Pansu} determines the metric $d_\infty$ such that
\[\left(\Gamma,\frac{1}{n}d_S,e\right) \GHto \left(G_\infty,d_\infty,\mathlcal{e}\right).\]
Hence, $\lim_{n\uparrow +\infty} \frac{1}{t_n}\bull x_n = \mathlcal{g}$ exactly when $\frac{1}{t_n}\bull x_n$ converges to $\mathlcal{g}$ in $(G_\infty,d_\infty)$. The corresponding metric statement shows that, given sequences $\{x_n\}_{n\in\N}$, $\{x_n'\}_{n\in\N}$ in $\Gamma$, and $t_n\uparrow+\infty$ as $n \uparrow +\infty$ with $\lim_{n\uparrow +\infty} \frac{1}{t_n}\bull x_n = \mathlcal{g}$ and $\lim_{n\uparrow +\infty} \frac{1}{t_n}\bull x_n' = \mathlcal{g}'$,
\begin{equation*}
    d_\infty (\mathlcal{g},\mathlcal{g}') = \lim_{n\uparrow +\infty} \frac{1}{t_n}d_S(x_n,x_n').
\end{equation*}

The abelianized Lie algebras are defined by $\g_\infty^{\ab} := \g_\infty/[\g_\infty,\g_\infty]_\infty \cong \mathfrak{v}_1$ and $\g^{\ab}:=\g/[\g,\g]_1$. In particular, $\g^{\ab}\cong\g_\infty^{\ab}$. By the Frobenius integrability criterion, the integrable curves in $G_\infty$, the tangent vectors must belong to $\mathfrak{v}_1$ at each point of the curve. An \textit{admissible} (or curve) in $G_\infty$ is a Lipschitz curve $\upgamma:[t_0,t_1] \to G_\infty$ such that the tangent vector $\upgamma' (t) \in \mathfrak{v}_1$ for all $t \in [t_0, t_1]$. Let $\phi:\g_\infty^{\ab}\to[0,+\infty)$ be a norm in the abelianized algebra. Then the $\ell_\phi$-length of the admissible $\upgamma$ is
\[\ell_\phi(\upgamma) := \int_{t_0}^{t_1}\phi\big(\upgamma'(t)\big)dt.\]
Set $d_\phi$ to be the inner metric of the length space $(G,\ell_\phi)$ given by
\begin{equation} \label{eq:def.d.phi}
    d_\phi(\mathlcal{g},\mathlcal{g}') :=\inf\big\{\ell_\phi(\upgamma)\colon \upgamma \text{ is an admissible curve from } \mathlcal{g} \text{ to }\mathlcal{g}' \text{ in }G_\infty\big\}.
\end{equation}

In fact, the construction of $d_\phi$ can be employed to define $d_\infty$. The bi-Lipschitz property is a consequence of the results in \cref{sec:norms.and.mean}. One can also verify that the metric $d_\infty$ is right-invariant and homogeneous with respect to $\delta_t$. Let us define the projections
\[\pi:\g\to\g^{\ab} \quad\text{and}\quad \pi_\infty:\g_\infty \to \mathfrak{v}_1\cong \g_\infty^{\ab}\]
so that, if $v=\sum_{i=1}^l v_i \in \g_\infty$ with $v_i\in\mathfrak{v_i}$, then $\pi_\infty(v)=v_1$ and $\pi = L^{-1}\circ \pi_\infty\circ L$. The next lemma compiles several well-known results that will be employed throughout the text. We state the results and their proofs can be found in \cite{cantrell2017,decornulier2016,pansu1983}.

\begin{lemma} \label{lm:conv.seq.Gr}

    Consider $\Gr$ a finitely generated torsion-free nilpotent group, then all of the following hold true:
    \begin{itemize}
    \item[(i)] Let $\mathlcal{g} \in G_\infty$. Then there exists a sequence $\{x_n\}_{n \in \N} \subseteq \Gr$ such that \[\lim_{n\uparrow + \infty}\frac{1}{n}\bull x_n= \mathlcal{g}.\]

    \item[(ii)]
    Let $\{x_n\}_{n \in \N}, \{y_n\}_{n \in \N} \subseteq \Gr$, $\mathlcal{g}, \mathlcal{h} \in G_\infty$, and  $t_n \uparrow +\infty$ as $n \uparrow +\infty$ be such that $\lim_{n\uparrow + \infty}\frac{1}{t_n}\bull x_n = \mathlcal{g}$ and $\lim_{n\uparrow + \infty}\frac{1}{t_n}\bull y_n = \mathlcal{h}$. Then
    \[\lim_{n\uparrow + \infty}\frac{1}{t_n}\bull x_n y_n = \mathlcal{gh}.\]

    \item[(iii)]
    Let $x \in \Gr$, then
    \begin{align*}
        \lim_{n\uparrow + \infty}\frac{1}{n}\bull x^n &= \left(\exp_\infty \circ L \circ \pi \circ \log\right)(x)\\
        &= \left(\exp_\infty \circ ~\pi_\infty \circ L\circ \log\right)(x).
    \end{align*}
    \end{itemize}
\end{lemma}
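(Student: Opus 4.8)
The plan is to prove the three items in the order (iii), (i), (ii), working inside the Mal'cev completion $G$ and using the explicit rescaled product $\tfrac{1}{t}\bull(-)=\exp_\infty\circ L\circ\sigma_{1/t}\circ\log$ together with the dilations $\sigma_t$, which act by $t^i$ on $V_i$; the recurring point is that $G$ is simply connected nilpotent, so the Baker--Campbell--Hausdorff (BCH) series is a \emph{finite} sum of iterated brackets and every map involved is polynomial, and hence convergence reduces to tracking homogeneity degrees under $\sigma_t$ via $[\g^i,\g^j]_1\subseteq\g^{i+j}$. For (iii) I would compute directly: since $\exp\colon\g\to G$ is a diffeomorphism and $x\in\Gr\le G$, we have $x^n=\exp(n\log x)$, whence $\sigma_{1/n}(\log x^n)=n\,\sigma_{1/n}(\log x)$; decomposing $\log x=\sum_{i=1}^{l}w_i$ with $w_i\in V_i$ gives $\sum_{i=1}^{l}n^{\,1-i}w_i\to w_1$. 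Applying $\exp_\infty\circ L$ and using the identification $V_1\cong\g/\g^2=\g^{\ab}$ under which $w_1=\pi(\log x)$, together with $L\circ\pi=\pi_\infty\circ L$ (immediate from $\pi=L^{-1}\circ\pi_\infty\circ L$), yields
\[
  \lim_{n\uparrow+\infty}\tfrac{1}{n}\bull x^n=\exp_\infty\big(L(w_1)\big)=(\exp_\infty\circ L\circ\pi\circ\log)(x)=(\exp_\infty\circ\pi_\infty\circ L\circ\log)(x),
\]
which is the asserted formula.

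For (i), given $\mathlcal{g}\in G_\infty$ write $\mathlcal{g}=\exp_\infty(v)$ and set $w:=L^{-1}(v)\in\g$. Since $\Gr$ is cocompact in $G$, fix a compact $K\subseteq G$ with $G=\Gr\cdot K$ and pick $x_n\in\Gr$ with $k_n:=x_n^{-1}\exp(\sigma_n w)\in K$. By BCH, $\log x_n=\sigma_n(w)\ast\log(k_n^{-1})$ with $\log(k_n^{-1})$ confined to a fixed compact subset of $\g$. Applying $\sigma_{1/n}$: the leading term gives $\sigma_{1/n}\sigma_n(w)=w$; a bracket pairing the $V_i$-component $n^i w_i$ of $\sigma_n w$ with a bounded $V_j$-component of $\log(k_n^{-1})$ lies in $V_{i+j}$ and has size $O(n^i)$, so $\sigma_{1/n}$ rescales it to $O(n^{-j})=o(1)$ since $j\ge1$; the pure $\log(k_n^{-1})$ contribution is likewise $o(1)$. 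Hence $\sigma_{1/n}(\log x_n)\to w$, and therefore $\tfrac{1}{n}\bull x_n=\exp_\infty(L\,\sigma_{1/n}\log x_n)\to\exp_\infty(v)=\mathlcal{g}$.

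For (ii), set $a_n:=\log x_n$, $b_n:=\log y_n$, so $\log(x_ny_n)=a_n\ast b_n$ with $\ast$ the BCH product of $(\g,[\cdot,\cdot]_1)$. From $[u,v]_t=\sigma_{1/t}\big([\sigma_t u,\sigma_t v]_1\big)$ one checks term by term that $\sigma_{1/t}(a\ast b)=(\sigma_{1/t}a)\ast_t(\sigma_{1/t}b)$, where $\ast_t$ is the BCH product of $(\g,[\cdot,\cdot]_t)$. Writing $a_n':=\sigma_{1/t_n}a_n$ and $b_n':=\sigma_{1/t_n}b_n$, the hypotheses read $a_n'\to L^{-1}(\log_\infty\mathlcal{g})$ and $b_n'\to L^{-1}(\log_\infty\mathlcal{h})$, while $[\cdot,\cdot]_{t_n}\to L^{-1}\big([L(\cdot),L(\cdot)]_\infty\big)$ by the discussion preceding the lemma. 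Since the (finite) BCH product is continuous and polynomial in its arguments and in the structure constants, $a_n'\ast_{t_n}b_n'$ converges; applying $\exp_\infty\circ L$ and using that $L$ intertwines the limiting bracket with $[\cdot,\cdot]_\infty$ turns the limit into the BCH product in $\g_\infty$, i.e. into $\log_\infty(\mathlcal{g}\mathlcal{h})$. Thus $\tfrac{1}{t_n}\bull(x_ny_n)\to\mathlcal{g}\mathlcal{h}$.

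The step I expect to be the main obstacle is the uniform control of the BCH cross terms under the anisotropic rescaling in (i) and (ii): one must verify that every iterated bracket, once rescaled by $\sigma_{1/t_n}$, genuinely carries a strictly positive power of $1/t_n$, which is where the grading $\g^i=V_i\oplus\cdots\oplus V_l$, the inclusions $[\g^i,\g^j]_1\subseteq\g^{i+j}$, and torsion-freeness of $\Gr$ and $\g^{\ab}$ come in. Once this degree bookkeeping is in place, (i) and (ii) follow from finiteness of the BCH series and continuity of polynomial maps. Alternatively, (i) could be deduced from Pansu's theorem by discretizing an admissible curve from $\mathlcal{e}$ to $\mathlcal{g}$ along $d_S$-geodesics, but the Mal'cev-completion argument is the more self-contained route.
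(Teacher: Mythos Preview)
The paper does not prove this lemma: it explicitly says that the results are well known and that ``their proofs can be found in \cite{cantrell2017,decornulier2016,pansu1983}''. So there is no in-paper argument to compare against; what you have supplied is a self-contained proof that the paper chose to outsource.

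Your argument is correct and follows the standard Mal'cev-completion route. The computation in (iii) is clean. For (ii), the key identity $\sigma_{1/t}(a\ast_1 b)=(\sigma_{1/t}a)\ast_t(\sigma_{1/t}b)$ is exactly the statement that $\sigma_{1/t}$ is a Lie algebra isomorphism from $(\g,[\cdot,\cdot]_1)$ to $(\g,[\cdot,\cdot]_t)$, and the finiteness of BCH in the nilpotent case makes the continuity-in-structure-constants step rigorous without any further work. For (i), the cocompactness trick is the right one; the only cosmetic slip is writing that a bracket of a $V_i$-component with a $V_j$-component ``lies in $V_{i+j}$'' when in fact it lies in $\g^{i+j}=V_{i+j}\oplus\cdots\oplus V_l$. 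You clearly know this (you invoke the grading $\g^i=V_i\oplus\cdots\oplus V_l$ in your closing paragraph), and the estimate only improves: the $V_m$-component with $m\ge i+j$ is rescaled by $n^{-m}\le n^{-(i+j)}$, so the $O(n^{-j})$ bound survives. It is also worth making explicit that every BCH term of degree $\ge 2$ in (i) must involve at least one factor from $\log(k_n^{-1})$ (since iterated brackets of $\sigma_n w$ alone vanish), which is what forces $s\ge 1$ in your degree count; you use this implicitly.

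In short: your proof is sound, and it is essentially the argument one finds in the references the paper cites (Pansu, Cantrell--Furman), carried out via direct BCH bookkeeping rather than quoted.
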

\begin{remark}
    The conditions imposed on $\Gr$ might appear somewhat restrictive. However, we will subsequently regain many properties by making necessary adjustments for virtually nilpotent $\Gr$ through the quotient $\Gr' = N/\tor N$ (see \cref{sec:virt.nilpotent.proofs}).
\end{remark}

The item (iii) in \cref{lm:conv.seq.Gr} has direct implications for the application of subadditive ergodic theorems. To address this constraint, we overcome it by approximating the lengths using polygonal curves. We present, without proof, Lemma 3.7 from \cite{cantrell2017}, which will be employed in the approximation.

\begin{lemma} \label{lm:small.perturbations}
    Consider $\Gr$ nilpotent. Let $\{y_i\}_{i=1}^m \subseteq \Gr$ and $\varepsilon>0$ be given. Then there exist $\xi>0$ and $\thickbar{n} \in \N$ so that, for all $n>\thickbar{n}$, for all $n_j \in \{0, 1, \dots, \lfloor \xi \thickbar{n}\rfloor\}$,
    \[
        \frac{1}{n}d_S\left(y_m^{n-n_m}y_{m-1}^{n-n_{m-1}}\dots y_1^{n-n_1},~y_m^{n}y_{m-1}^{n}\dots y_1^{n}\right)< \varepsilon.
    \]
\end{lemma}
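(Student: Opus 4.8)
The plan is to argue by contradiction using the asymptotic cone already built, so that a proportional perturbation of the exponents becomes invisible after rescaling thanks to the homogeneity of $(G_\infty,d_\infty)$ under $\{\delta_t\}_{t>0}$. One first reduces to $\Gr$ torsion-free: the torsion subgroup $T=\tor\Gr$ of a finitely generated nilpotent group is finite, and for the quotient $q\colon\Gr\to\Gr/T$ a lifting argument gives $\big|\,\|x\|_S-\|q(x)\|_{q(S)}\,\big|\le\max_{t\in T}\|t\|_S$, so $d_S$ and the word metric of the torsion-free group $\Gr/T$ differ by a bounded amount -- harmless after dividing by $n$. Assume henceforth $\Gr$ torsion-free nilpotent, and recall $\tfrac1t\bull x=(\exp_\infty\circ L\circ\sigma_{1/t}\circ\log)(x)$, the identity $\tfrac1n\bull y^m=\delta_{m/n}\big(\tfrac1m\bull y^m\big)$ (from $\delta_t\circ\exp_\infty\circ L=\exp_\infty\circ L\circ\sigma_t$ and $\sigma_{m/n}\circ\sigma_{1/m}=\sigma_{1/n}$), and the limits $\mathlcal y_i:=\lim_{m\to\infty}\tfrac1m\bull y_i^m=(\exp_\infty\circ L\circ\pi\circ\log)(y_i)$ from \cref{lm:conv.seq.Gr}(iii).

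Suppose the lemma fails for some fixed $\{y_i\}_{i=1}^m$ and $\varepsilon>0$: for every $\xi>0$ and $\bar n\in\N$ there are $n>\bar n$ and $(n_1,\dots,n_m)\in\{0,\dots,\lfloor\xi\bar n\rfloor\}^m$ with $\tfrac1n d_S\big(y_m^{n-n_m}\cdots y_1^{n-n_1},\,y_m^{n}\cdots y_1^{n}\big)\ge\varepsilon$. Applying this along sequences $\xi_k\downarrow0$, $\bar n_k\uparrow\infty$ produces $n_k>\bar n_k$ (so $n_k\to\infty$) and $(n_i^{(k)})_{i=1}^m$ with $0\le n_i^{(k)}\le\lfloor\xi_k\bar n_k\rfloor<\xi_k n_k$, whence $n_i^{(k)}/n_k\to0$ for each $i$; setting $m_i^{(k)}:=n_k-n_i^{(k)}$ (so $m_i^{(k)}\to\infty$ and $m_i^{(k)}/n_k\to1$), $A_k:=y_m^{m_m^{(k)}}\cdots y_1^{m_1^{(k)}}$ and $B_k:=y_m^{n_k}\cdots y_1^{n_k}$, we have $\tfrac1{n_k}d_S(A_k,B_k)\ge\varepsilon$ for all $k$.

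Now $\tfrac1{n_k}\bull B_k\to\mathlcal y_m\cdots\mathlcal y_1$ by \cref{lm:conv.seq.Gr}(iii) along $n=n_k$ together with $m-1$ applications of part~(ii). The same holds for $A_k$: by the displayed identity $\tfrac1{n_k}\bull y_i^{m_i^{(k)}}=\delta_{m_i^{(k)}/n_k}\big(\tfrac1{m_i^{(k)}}\bull y_i^{m_i^{(k)}}\big)$, and since $\tfrac1{m_i^{(k)}}\bull y_i^{m_i^{(k)}}\to\mathlcal y_i$ while $m_i^{(k)}/n_k\to1$, joint continuity of $(t,p)\mapsto\delta_t(p)$ gives $\tfrac1{n_k}\bull y_i^{m_i^{(k)}}\to\delta_1(\mathlcal y_i)=\mathlcal y_i$; iterating \cref{lm:conv.seq.Gr}(ii) yields $\tfrac1{n_k}\bull A_k\to\mathlcal y_m\cdots\mathlcal y_1$. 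Passing to a subsequence on which $n_k$ strictly increases and using the metric characterization of $d_\infty$ recorded after \cref{lm:conv.seq.Gr} (if $\tfrac1{t_n}\bull x_n\to\mathlcal g$ and $\tfrac1{t_n}\bull x_n'\to\mathlcal g'$ then $\tfrac1{t_n}d_S(x_n,x_n')\to d_\infty(\mathlcal g,\mathlcal g')$), applied with $t_k=n_k$, $x_k=A_k$, $x_k'=B_k$, gives $\tfrac1{n_k}d_S(A_k,B_k)\to d_\infty(\mathlcal y_m\cdots\mathlcal y_1,\mathlcal y_m\cdots\mathlcal y_1)=0$, contradicting $\tfrac1{n_k}d_S(A_k,B_k)\ge\varepsilon$.

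The step that really does the work is $\tfrac1{n_k}\bull y_i^{m_i^{(k)}}\to\mathlcal y_i$: it says that rescaling the power $y_i^{m}$ by a factor $n$ proportionally close to $m$ still lands at $\mathlcal y_i$ in the limit, and the homogeneity of the cone under $\{\delta_t\}$ is exactly what makes a $\xi$-proportional perturbation of exponents negligible at scale $n$. An alternative avoiding the cone telescopes $d_S(A,B)\le\sum_{j=1}^m\|g_jy_j^{-n_j}g_j^{-1}\|_S$ with $g_j:=y_m^{n}\cdots y_{j+1}^{n}$, bounds each summand by $\|y_j^{-n_j}\|_S+\|[g_j,y_j^{-n_j}]\|_S\le\xi n\max_i\|y_i\|_S+\|[g_j,y_j^{-n_j}]\|_S$, and controls the commutator -- which lies in $[\Gr,\Gr]$ -- via the $1/c$-distortion of the lower central series; there the obstacle is the uniform estimate $\|[g_j,y_j^{-n_j}]\|_S=o(n)$, which needs a Mal'cev-coordinate computation exploiting that $y_j^{-n_j}$ is a power of a fixed element.
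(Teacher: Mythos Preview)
The paper does not prove this lemma: it is quoted verbatim as Lemma~3.7 of Cantrell--Furman \cite{cantrell2017} and explicitly presented ``without proof''. So there is no in-paper argument to compare against; I can only assess your proof on its own terms.

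Your argument is correct. The reduction to the torsion-free case is legitimate (finite torsion subgroup, bounded distortion of word lengths), the identity $\tfrac{1}{n}\bull y^m=\delta_{m/n}\big(\tfrac{1}{m}\bull y^m\big)$ follows exactly as you indicate from $\delta_t\circ L=L\circ\sigma_t$ and the multiplicativity $\sigma_{tt'}=\sigma_t\circ\sigma_{t'}$, and the joint continuity of $(t,p)\mapsto\delta_t(p)$ is immediate from the diffeomorphism $\exp_\infty$ and the linear action on $\g_\infty$. The contradiction setup is handled carefully: even with the $\lfloor\xi\bar n\rfloor$ (rather than $\lfloor\xi n\rfloor$) in the stated range for $n_j$, your chain $n_i^{(k)}\le\lfloor\xi_k\bar n_k\rfloor<\xi_k n_k$ still forces $n_i^{(k)}/n_k\to0$, and passing to a strictly increasing subsequence of $n_k$ to invoke the metric characterization of $d_\infty$ is the right technical move. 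The iterated use of \cref{lm:conv.seq.Gr}(ii) then gives $\tfrac{1}{n_k}\bull A_k$ and $\tfrac{1}{n_k}\bull B_k$ the same limit, and the metric statement recorded after Pansu's theorem closes the contradiction.

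Your proof is in fact tailored to the paper's toolkit: it uses only the cone machinery of \S\ref{sec:asymptotic.cone} and \cref{lm:conv.seq.Gr}, which makes it a very natural in-context argument. The telescoping alternative you sketch at the end (conjugates plus distortion of commutators in Mal'cev coordinates) is closer in spirit to how such estimates are often done directly in the nilpotent-group literature, and is likely nearer to the original proof in \cite{cantrell2017}; your cone-based route trades that explicit coordinate bookkeeping for an appeal to Gromov--Hausdorff convergence, which is cleaner here since that convergence has already been set up.
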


One standout example that exemplifies several properties presented above is the discrete Heisenberg group. As a prime example of a nilpotent group, it offers valuable insights into the fusion of algebraic structures with geometric phenomena in both geometric group theory and metric geometry.

\begin{example}[The discrete Heisenberg group] \label{ex:Heisenberg.group}
    The discrete Heisenberg group can be visualized as a collection of integer lattice points in a three-dimensional space, with a unique group structure derived from matrix multiplication. The nilpotent nature is the key to understand its intricate geometric properties. Let $R$ be a commutative ring with identity and set $H_3(R) := \{(\mathtt{x}, \mathtt{y}, \mathtt{z}): \mathtt{x},\mathtt{y},\mathtt{z} \in R\}$ to be the set of upper triangular matrices with
    \[
        (\mathtt{x}, \mathtt{y}, \mathtt{z}) := 
        \begin{pmatrix}
        \mathtt{1} & \mathtt{x} & \mathtt{z}\\
        \mathtt{0} & \mathtt{1} & \mathtt{y} \\
        \mathtt{0} & \mathtt{0} & \mathtt{1}
        \end{pmatrix}.
    \]
    
    \begin{figure}[htb]
    \centering
    \includegraphics[scale=0.16]{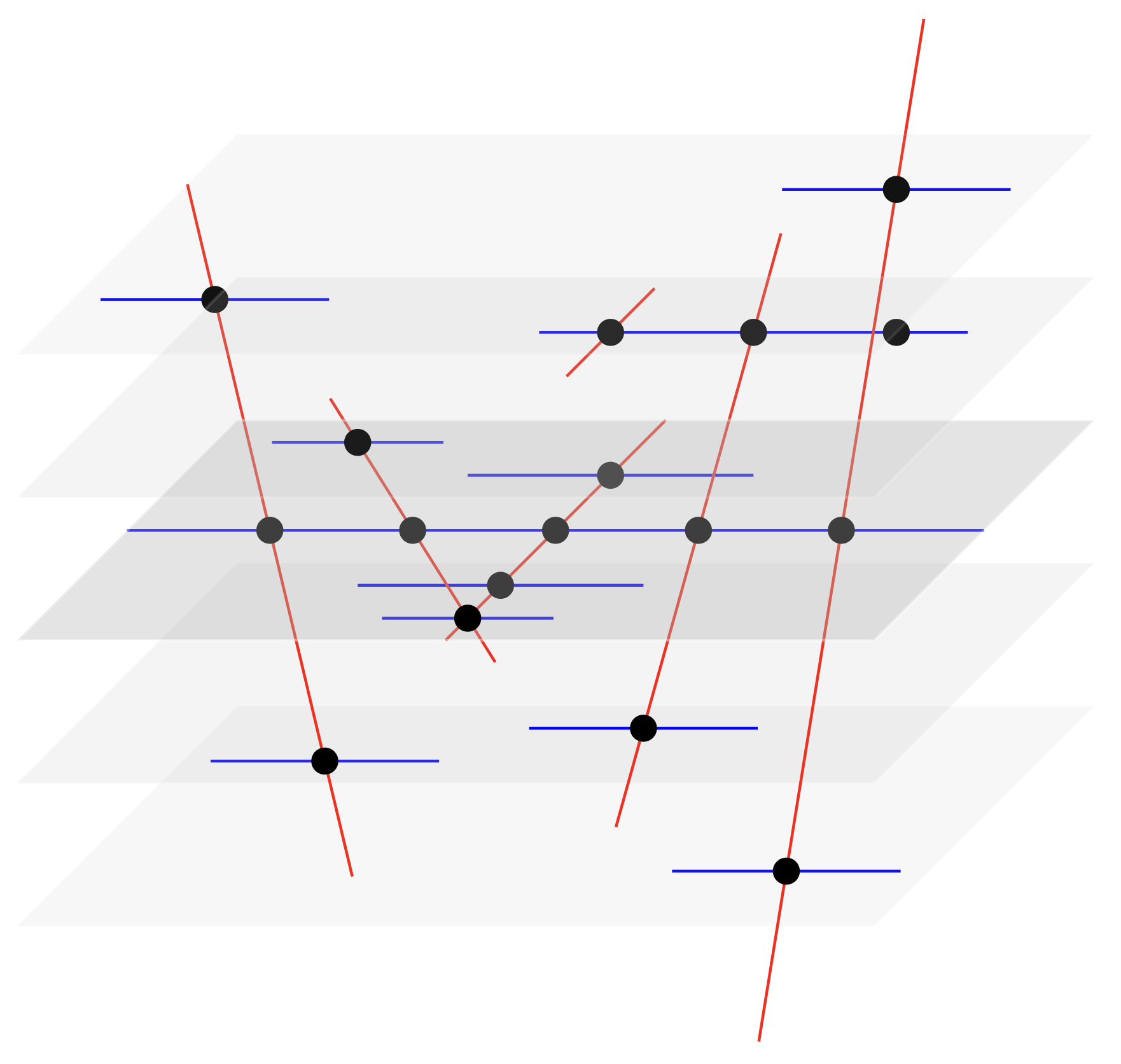}
    \caption{A section of the Heisenberg discrete Cayley graph $\mathcal{C}\big(H_3(\mathbb{Z}),S\big)$ embedded in $\R^3$.}
    \end{figure}

    The Heisenberg group on $R$ is $H_3(R)$ with the matrix multiplication. In particular, $H_3(\mathbb{Z})$ is known as discrete Heisenberg group. Let $\Gr = H_3(\mathbb{Z})$, $\mathtt{X} = (1,0,0)$, $\mathtt{Y} = (0,1,0)$, $\mathtt{Z} = (0,0,1)$, and $S =\{\mathtt{X}^{\pm 1},\mathtt{Y}^{\pm 1}\}$. Observe that
    \begin{align*}
        (\mathtt{x}, \mathtt{y}, \mathtt{z}).(\mathtt{x}', \mathtt{y}', \mathtt{z}') &= (\mathtt{x} + \mathtt{x}', ~\mathtt{y} + \mathtt{y}', ~\mathtt{z} + \mathtt{z}' + \mathtt{x}\mathtt{y}'),\\
        (\mathtt{x}, \mathtt{y}, \mathtt{z})^{-1} &= (-\mathtt{x}, -\mathtt{y}, \mathtt{xy}-\mathtt{z}), \text{ and}\\
        \big[(\mathtt{x}, \mathtt{y}, \mathtt{z}),~(\mathtt{x}', \mathtt{y}', \mathtt{z}')\big] &= (0,0,\mathtt{xy}'-\mathtt{x}'\mathtt{y}).
    \end{align*}

    Therefore, for all $m,n\in \mathbb{Z}$,
    \begin{equation} \label{eq:Heisenberg.op}
        \mathtt{X}^m = (m,0,0), \quad \mathtt{Y}^n=(0,n,0),~~\text{and} \quad [\mathtt{X}^m,\mathtt{Y}^n]=\mathtt{Z}^{m\cdot n} = (0,0,m\cdot n).
    \end{equation}

    One can easily see that $S$ is a finite generating set of $\Gr$. Furthermore, $\Gr_1 = [\Gr,\Gr] = \langle\mathtt{Z}\rangle$ and $\Gr_2=[\Gr,\Gr_1] = \{e\}$. Hence, $\Gr$ is nilpotent of class $2$ and $S \subseteq \Gr \setminus [\Gr,\Gr]$. Consider $\|-\|_{S}$ the word norm of $\mathcal{C}(\Gr,S)$. It follows from \eqref{eq:Heisenberg.op} that
    \[\|\mathtt{Z}^{m}\|_S \in \mathcal{O}(\sqrt{m}) \quad \text{as }m \uparrow +\infty.\]
    It highlights how the rescaled norm $\frac{1}{n}\|x^n\|_S$ vanishes as $n \uparrow +\infty$ when $x \in [\Gr,\Gr]$. 
    
    Due to the properties above, one can write $(\mathtt{x},\mathtt{y})=(\mathtt{x},\mathtt{y},\mathtt{z})[\Gr,\Gr]$. Note that $S^{\ab} = \left\{(\pm 1,0), (0,\pm 1)\right\}$ is a finite generating set of the abelianized group $\Gr^{\ab}= \Gr/[\Gr,\Gr]$ which yields an isomorphism of $\mathcal{C}(\Gr^{\ab},S^{\ab})$ and the square $\mathbb{Z}^2$ lattice.

    By construction of the asymptotic cone, the Mal'cev completion $G$ of $\Gr\simeq\Gr'$ is the continuous Heisenberg group $H_3(\R)$ with its associated Lie algebra $\mathfrak{h}= \g$, in this case, $\g\simeq \g_\infty$ and $G \simeq G_\infty$. The Heisenberg algebra $\mathfrak{h}$ is given by $\mathfrak{h} = \operatorname{span}_{\R}\{e_{12},e_{13}, e_{23}\}$ with $\big\{e_{ij}:i,j \in \{1,2,3\}\big\}$ the canonical basis of $M_{3\times3}(\R)$.

    Since for all $\mathtt{A},\mathtt{B} \in \mathfrak{h}$ one has $[\mathtt{A},\mathtt{B}]_\infty = \mathtt{A}\mathtt{B}-\mathtt{B}\mathtt{A}\in\operatorname{span}_{\R}\{e_{13}\}$ by matrix multiplication, it then follows that $\mathfrak{h} = \mathfrak{v}_1 \oplus \mathfrak{v}_2$ with $\mathfrak{v}_1 \simeq \operatorname{span}_{\R}\{e_{12},e_{23}\}$ and $\mathfrak{h}^{\ab}\simeq \g^{\ab}_\infty \simeq\mathfrak{v}_1$. 
    
    Let $\mathtt{A} = \mathtt{u}\cdot e_{12} + \mathtt{v}\cdot e_{23} + \mathtt{w}\cdot e_{13}$, then $\exp_\infty (\mathtt{A})= \left(\mathtt{u},\mathtt{v}, \mathtt{w}+ \frac{1}{2}\mathtt{uv}\right)$. Since $(\mathtt{x},\mathtt{y},\mathtt{z})^{n} = \left(n\mathtt{x},n\mathtt{y},n\mathtt{z} + \frac{n(n-1)}{2}\mathtt{xy}\right)$ one can verify by the procedure defined in this section that $\frac{1}{n} \bull (\mathtt{x},\mathtt{y},\mathtt{z})^{n} = \left(\mathtt{x},\mathtt{y},\frac{1}{n}\mathtt{z}-\frac{1}{n}\mathtt{xy} + \frac{1}{2}\mathtt{xy}\right) \in G_\infty$. It implies that, for all $\mathtt{x},\mathtt{y},\mathtt{z} \in \mathbb{Z}$,
    \[
        \lim_{n \uparrow +\infty} \frac{1}{n} \bull (\mathtt{x},\mathtt{y},\mathtt{z})^{n} = \left(\mathtt{x},\mathtt{y},\frac{1}{2}\mathtt{xy}\right) = \exp_\infty \left(\pi_\infty\Big( \log(\mathtt{x},\mathtt{y},\mathtt{z}) \Big)\right).
    \]
\end{example}

\subsection{Some examples of virtually nilpotent groups} \label{sec:examples.virt.nil}
In this subsection, our focus shifts to examples of virtually nilpotent groups that can be constructed through direct and outer semidirect products. The discussion of the virtually nilpotent case will be explored more extensively later in the text.

Let $\mathrm{L}$ be a nilpotent group and consider $M$ a finite group. Then the direct product
\[
    K = \mathrm{L} \times M
\]
is a group with the binary operation given by $(x,m).(y,m') = (xy,mm')$. Note that the commutator is $\big[(x,m),(y,m')\big] = \big([x,y], [m,m']\big) $. It follows that, for all $A,A' \subseteq \mathrm{L}$ and $B,B' \subseteq M$,
\[
    \big[A \times B,A' \times B'\big]  = [A, B] \times [A', B'].
\]

Hence, $K$ is a nilpotent group if, and only if, $M$ is nilpotent. On the other hand, for all finite group $M$, $K$ is virtually nilpotent.

Set $S_{\mathrm{L}}$ and $S_M$ to be finite symmetric generating sets of $\mathrm{L}$ and $M$, respectively. \[\big(S_{\mathrm{L}}\times\{e\}\big) \cup \big(\{e\}\times S_M)\] is a finite generating set of $K$. We will consider another useful example of generating set of $K$. Let $S_\square^e$ stand for $S_\square\cup\{e\}$. Then 
\[S=S_{\mathrm{L}}\times S_M^e\]
is also a symmetric generating set of $K$. Furthermore, if $\mathrm{L}$ is torsion-free, then $\llbracket  S \rrbracket$ is analog to $S_{\mathrm{L}}$ while $\Gr'\simeq \mathrm{L}$.

\begin{example}
    Let $\mathrm{SL}(2,3)$ be the of degree two over a field of three elements determined by 
    \[\mathrm{SL}(2,3) = \left\langle \rho_1,\rho_2,\rho_3 \colon \rho_1^3=\rho_2^3=\rho_3^3 =\rho_1 \rho_2 \rho_3 \right\rangle\] 
    A remarkable property of $\mathrm{SL}(2,3)$ is that it is the smallest group that is not nilpotent. Let $\Z_m = \langle  \rho_0\rangle$ the cyclic group with $\rho_0^m=e$ and consider $H_3(\Z)$ to be the discrete Heisenberg group, as defined in \cref{ex:Heisenberg.group}. Set
    \[\Gr = \big(H_3(\Z)\times\Z_m\big)\times\mathrm{SL}(2,3).\]

    Then $\Gr$ is virtually nilpotent with $N=H_3(\Z)\times\Z_m\times\{e\} \unlhd \Gr$ such that $\kappa=[\Gr:N] = |\mathrm{SL}(2,3)| = 24$. Hence, considering this notation:
    \[N \simeq H_3(\Z)\times\Z_m, \quad \tor N = \{e\}\times\Z_3\times\{e\} \simeq \Z_3 \quad \Gr'=N/\tor N \simeq H_3(\Z).\]
    Let us write $\mathrm{SL}(2,3) = \{z_j\}_{j=1}^{24}$ and fix $z_{(j)} = (e,e,z_j)$ as representatives for each coset in $\Gr/N$. Thus,
    \[\uppi_N(x,y,z)=(x,y, e), \text{ and} \quad \big\llbracket (x,y,z) \big\rrbracket = \{x\}\times\Z_3\times\{e\} \cong x \in H_3(\Z).\]
    Now, set
    \[S_{H_3(\Z)}= \big\{\mathtt{X}^{\pm1},\mathtt{Y}^{\pm1}\big\}, \quad S_{\Z_m}=\left\{\rho_0^{\pm1}\right\}, \text{ and}\quad S_{\mathrm{SL}(2,3)}=\left\{\rho_1^{\pm1},\rho_2^{\pm1},\rho_3^{\pm1}\right\}.\]
    Then \[S=S_{H_3(\Z)} \times S_{\Z_m}^e \times S_{\mathrm{SL}(2,3)}^e\]
    is a finite symmetric generating set of $\Gr$. Moreover, the Cayley graph $\mathcal{C}(\Gr,S)$ is homomorphic equivalent to $\mathcal{C}(\Gr',\llbracket S \rrbracket)$, which is isomorphic to $\mathcal{C}\left(H_3(\Z),S_{H_3(\Z)}\right)$.
\end{example}

More generally, one can also obtain a virtually nilpotent group by the outer semidirect product. Consider $N$ a nilpotent and $H$ a finite group. Let $\varphi$ be a group homomorphism $\varphi:H \to \operatorname{Aut}(N)$, where $\operatorname{Aut}(N)$ is the automorphism group of $N$. Then the semidirect group is
\[
    \Gr = N \rtimes_\varphi H
\]
whose elements are the same of $N \times H$ but the binary operation is characterized by 
\begin{align*}
    (x,h).(y,h') &= \big(x\varphi_h(y), hh'\big),\\
    (x,h)^{-1} &= \big(\varphi_{h^{-1}}(x^{-1}), h^{-1}\big), \text{ and}\\
    \Big[(x,h),(y,h')\Big] &= \Big(x\varphi_h(y)\varphi_{hh'h^{-1}}(x^{-1})\varphi_{[h,h']}(y^{-1}), ~[h,h']\Big).
\end{align*}

Let $S_N$ and $S_H$ be finite symmetric generating sets of $N$ and $H$, respectively. Hence, similarly to the direct product, \[\big(S_N\times\{e\}\big)\cup\big(\{e\}\times S_H\big)\] is a finite symmetric generating set of $\Gr$. Moreover, $S_N \times S_H^e$ is also a finite generating set, but not necessarily symmetric. However,
\[\left(\bigcup_{h\in H}\varphi_h(S_N)\right)\times H\]
is finite, symmetric, and generates $\Gr$. The next example illustrates how some properties of the outer semidirect product groups change in comparison to the direct product.

\begin{example}[Generalized dihedral group] \label{ex:dihedral}
Let $(N,+)$ be a finitely generated abelian group with polynomial growth rate $D\geq 1$ and $(\Z_2,+)$ with $\Z_2=\{0,1\}$. Fix $\varphi:\Z_2\to\operatorname{Aut}(N)$ such that $\varphi_0=id$ and $\varphi_1=-id$. The generalized virtually nilpotent diheral group is \[\operatorname{Dih}(N):= N \rtimes_\varphi \Z_2.\]

Consider $\Gr=\operatorname{Dih}(N)$, then for all $(x,r),(y,r') \in \Gr$,
\begin{align*}
    (x,r).(y,r') &= \big(x+\varphi_r(y), r+r'\big),\\
    (x,r)^{-1} &= \big((-1)^{r+1}x, r\big),\\
    \Big[(x,r),(y,r')\Big] &= \Big(\big(1 -(-1)^{r'}\big)x - \big(1-(-1)^{r}\big)y,\ 0\Big).
\end{align*}

Therefore, $\Gr$ is non-abelian and $\Gr_1 =[\Gr,\Gr]= 2N\times\{0\}$. One can easily verify that all elements of $\Gr_2 = [\Gr, \Gr_1]$ are
\[\Big[(x,r),(2y,0)\Big] = \Big(2\big((-1)^{r}-1\big)y ,\ 0\Big).\]

Hence, for all $n\in \N$, one has $\Gr_{n} \simeq 2^nN$. We can conclude that $\Gr$ is not nilpotent while it is virtually nilpotent since $N \unlhd \Gr$.
\end{example}

\subsection{Establishing a Candidate for the Limiting Shape} \label{sec:norms.and.mean}

From this point until \cref{sec:main.proofs}, let us once again regard $\Gr$ as a torsion-free nilpotent group. Our objective is to characterize the limiting shape using a norm that defines a metric in $G_\infty$. In this section, we achieve the desired norm through the application of a subadditive ergodic theorem. The convergence is not directly established as uniform convergence in $\R^D$ because of the constraints imposed by admissible curves.

Set $\thickbar c(x) := \E[c(x)]$, due to the subadditivity of the cocycle
\begin{equation*}
\thickbar{c}(xy) \leq \thickbar{c}(y) + \thickbar{c}(x),
\end{equation*}
for all $x,y \in \Gr$. Thus $\thickbar{c}(x) \leq b\|x\|_S$
with $b= \max_{s \in S}\big\{\thickbar{c}(s)\big\}$. It follows from \eqref{aml} that there exists a subsequence of $c(x^n)/n$ such that $c(x^{n_j})/n_j \geq a \|x\|_S^{\ab}$ $\p$-\textit{a.s.} for sufficiently large $j$.

Recall that $\Gr^{\ab}= \Gr/[\Gr,\Gr]$ and consider $x^{\ab} = x[\Gr,\Gr]$, To simplify notation, we also use $x^{\ab}$ interchangeably with $(\pi_\infty \circ L \circ \log)(x)$ when it is clear from the context. Let
\[
    \|x\|_S^{\ab} := \inf_{y \in x[\Gr,\Gr]} \|y\|_S.
\]

Since $\|-\|_S^{\ab}$ is discrete, there exists $y \in x[\Gr,\Gr]$ such that $\|x\|_S^{\ab} = \|y\|_S$. Hence, for all $x,y\in \Gr$, there exist $x',x'' \in x[\Gr,\Gr]$ and $y',y'' \in y[\Gr,\Gr]$ such that
\[\|xy\|_S^{\ab} = \|x'y'\|_S, \quad \|x\|_S^{\ab} = \|x''\|_S, \text{ and} \quad \|y\|_S^{\ab}= \|y''\|_S;\]
which implies the subadditivity
\[\|xy\|_S^{\ab} = \|x'y'\|_S \leq \|x''y''\|_S \leq \|x''\|_S + \|y''\|_S =\|x\|_S^{\ab} + \|y\|_S^{\ab}.\]

Now, regarding $\|x\|_S^{\ab} = 0$ whenever $x \in [\Gr,\Gr]$, one has for all $x \in [\Gr,\Gr]$ and $y \in \Gr$, $\|xy\|_S^{\ab} = \|y\|_S^{\ab}$. Let $y = s_m \dots s_{j+1}s_js_{j-1} \dots s_1$ with $s_j\in[\Gr,\Gr]$. Since $[\Gr,\Gr]$ is a normal subgroup of $\Gr$, $\Bar{s}_j= (s_{j-1} \dots s_1)^{-1} s_j(s_{j-1} \dots s_1) \in [\Gr, \Gr]$ is such that $y = s_m \dots s_{j+1}s_{j-1} \dots s_1\Bar{s}_j$. Hence
\[\|y\|_S^{\ab} = \|s_m \dots s_{j+1}s_{j-1} \dots s_1\|_S^{\ab}.\]

Therefore, $\|y\|_S^{\ab} = \|y\|_S=m$ if, and only if, there exists $\{s_i\}_{i=1}^m \subseteq S\setminus[\Gr,\Gr]$ such that $y=s_m \dots s_1$. Observe that $\Gr^{\ab}$ is a topological lattice of $G^{\ab}$ and $G^{\ab} \simeq \Gr^{\ab} \otimes \R \simeq \R^{\operatorname{dim}\mathfrak{v}_1} \simeq \g^{\ab}$. Let $\|-\|$ be an Euclidean norm on $G^{\ab}$ and fix $\thickbar{a},\thickbar{b} >0$ such that
\[\thickbar{a}:= \min\big\{\|s[\Gr,\Gr]\| \colon s \in S \setminus[\Gr,\Gr]\big\}, \text{ and} \quad \thickbar{b}:= \max\big\{\|s[\Gr,\Gr]\| \colon s \in S \setminus[\Gr,\Gr]\big\},\]

Due to the properties of a normed vector space, one has, for all $x \in \Gr$,
\begin{equation} \label{eq:biLipschitz.abelian.norm}
    \thickbar{a} \|x\|_S^{\ab} \leq \big\|x[\Gr,\Gr]\big\| \leq \thickbar{b}\|x\|_S^{\ab}.
\end{equation}

Set $f : \Gr^{\ab} \to \R_{\geq 0}$ to be given by
\begin{equation*}
    f(x^{\ab}) = \inf\{\thickbar{c}(y): y\in x[\Gamma,\Gamma]\}.
\end{equation*}
It follows immediately from the subadditivity of $\thickbar{c}$ and the definition of $f$ that
\[
    f(x^{\ab}y^{\ab}) \leq f(x^{\ab})+ f(y^{\ab}).
\]

We are now able to state the following a subadditive ergodic theorem obtained by Austin \cite{austin2016} and improved by Cantrell and Furman \cite{cantrell2017}.

\begin{proposition}[Subadditive Ergodic Theorem] \label{prop:subadditive.ergodic.thm}
    Let the subadditive cocycle $c:\Gr\times\Om \to \R_{\geq 0}$ associated with a p.m.p. ergodic group action $\tht:\Gr\curvearrowright\Om$ be such that $c(x) \in L^1(\Om,\F,\p)$ for all $x \in \Gr$. Then there exists a unique homogeneous subadditive function $\phi: \mathfrak{g}_\infty^{\ab} \to \R_{\geq 0}$ such that, for every $x \in \Gr$, 
    \[
        \lim_{n \uparrow +\infty}\frac{1}{n}c(x^n)= \phi(x^{\ab}) \quad \p\text{-a.s. and in }L^1.
    \]
    Moreover, $\phi$ is given by
    \begin{equation} \label{eq:def_phi}
        \phi(x^{\ab}) = \lim_{n\uparrow+\infty} \frac{1}{n}f\left(n \cdot x^{\ab}\right) = \inf_{n \geq 1}  \frac{1}{n}f\left(n \cdot x^{\ab}\right).
    \end{equation}
\end{proposition}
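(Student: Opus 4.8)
The plan is to decouple the construction of the deterministic function $\phi$, which is purely a statement about the abelianization, from the ergodic convergence. For the construction, I would note that $f\colon\Gr^{\ab}\to\R_{\geq0}$, already shown subadditive, is defined on the finitely generated torsion-free abelian group $\Gr^{\ab}\cong\Z^{\dim\mathfrak{v}_1}$, which is a lattice in $\g_\infty^{\ab}\cong\R^{\dim\mathfrak{v}_1}$. Fekete's subadditive lemma gives, for each $v\in\Gr^{\ab}$, a limit $\phi(v):=\lim_{n}\tfrac1n f(n\cdot v)=\inf_{n\geq1}\tfrac1n f(n\cdot v)$, which is positively homogeneous and subadditive on $\Gr^{\ab}$; the bound $f(u)\leq\thickbar{b}\,\|u\|_S^{\ab}$ together with \eqref{eq:biLipschitz.abelian.norm} gives $\phi(u)\leq C\|u\|$ for a fixed Euclidean norm, hence $|\phi(v)-\phi(w)|\leq C\|v-w\|$. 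This Lipschitz estimate lets $\phi$ extend uniquely to a continuous, positively homogeneous, subadditive $\phi\colon\g_\infty^{\ab}\to\R_{\geq0}$, and this is the function in the statement; uniqueness is then automatic, as any candidate agreeing with the asserted limit coincides with $\phi$ on $\Gr^{\ab}$, hence everywhere by homogeneity and continuity.

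Next I would establish the mean limit $\lim_n\tfrac1n\thickbar{c}(x^n)=\phi(x^{\ab})$. Subadditivity of $\thickbar{c}$ already gives $\lim_n\tfrac1n\thickbar{c}(x^n)=\inf_n\tfrac1n\thickbar{c}(x^n)$, and since $x^n$ represents $n\cdot x^{\ab}$ we get $\thickbar{c}(x^n)\geq f(n\cdot x^{\ab})\geq n\,\phi(x^{\ab})$, so the limit is $\geq\phi(x^{\ab})$. For the reverse inequality, fix $\varepsilon>0$, pick $m$ and $y\in x^m[\Gr,\Gr]$ with $\thickbar{c}(y)\leq f(m\cdot x^{\ab})+\varepsilon$, and use that $x^m$ and $y$ lie in the same coset: by \cref{lm:conv.seq.Gr}(iii) the sequences $\tfrac1n\bull x^{mn}$ and $\tfrac1n\bull y^{n}$ converge to the same point of $G_\infty$, so $\tfrac1n d_S(x^{mn},y^n)\to0$, i.e.\ the correction $r_n:=(y^n)^{-1}x^{mn}\in[\Gr,\Gr]$ has $\|r_n\|_S=o(n)$. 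Then $\thickbar{c}(x^{mn})\leq\thickbar{c}(y^n)+\thickbar{c}(r_n)\leq n\,\thickbar{c}(y)+\thickbar{b}\,\|r_n\|_S=n\,\thickbar{c}(y)+o(n)$; dividing by $mn$, letting $n\to\infty$, and then optimizing over $m$ and $\varepsilon$ gives $\lim_n\tfrac1n\thickbar{c}(x^n)\leq\phi(x^{\ab})$. This also delivers the displayed formula for $\phi$.

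For the almost-sure and $L^1$ convergence I would apply Kingman's subadditive ergodic theorem to the stationary subadditive array $a_{m,n}=c(x^{n-m})\circ\tht_{x^m}$ for the single p.m.p.\ transformation $\tht_x$ (with $\E[a_{0,1}]=\thickbar{c}(x)<\infty$). This gives $\tfrac1n c(x^n)\to\hat g_x$ $\p$-a.s.\ and in $L^1$, with $\hat g_x$ being $\tht_x$-invariant and $\E[\hat g_x]=\lim_n\tfrac1n\thickbar{c}(x^n)=\phi(x^{\ab})$ by the previous step. It remains to show $\hat g_x$ is $\p$-a.s.\ the constant $\phi(x^{\ab})$; for this I would invoke ergodicity of the ambient $\Gr$-action $\tht$. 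The cocycle identity gives, for each $z\in\Gr$, estimates of the form $c(x^n,\omega)\leq c(z,\omega)+c\big((zxz^{-1})^n,\tht_z\omega\big)+c(z^{-1},\tht_{zx^n}\omega)$ and their mirror images; the terms $\tfrac1n c(z^{\pm1},\cdot)$ evaluated along the $\tht_x$-orbit vanish $\p$-a.s.\ by Birkhoff's theorem, and $(zxz^{-1})^n$ is compared with $x^n$ using that $(zxz^{-1})^n(x^n)^{-1}\in[\Gr,\Gr]$ has sublinear word length (\cref{lm:conv.seq.Gr}, \cref{lm:small.perturbations}). Pushed through, these force $\hat g_x\circ\tht_z=\hat g_x$ $\p$-a.s.\ for every $z\in\Gr$, so $\hat g_x$ is $\Gr$-invariant, hence $\p$-a.s.\ constant, hence equal to its mean $\phi(x^{\ab})$.

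The step I expect to be the main obstacle is precisely this last one: upgrading the Kingman limit from merely $\tht_x$-invariant to genuinely deterministic. Reducing the problem to $\tht_z$-invariance for all $z$ is routine, but closing the gap between $\hat g_{zxz^{-1}}$ and $\hat g_x$ — which amounts to controlling $c$ along $n$-dependent correction words lying in $[\Gr,\Gr]$, under only the $L^1$ hypothesis rather than a quantitative tail bound — is delicate, and it is here that one genuinely uses the refinement of Austin's argument carried out by Cantrell and Furman \cite{cantrell2017}; in a first pass I would cite that result for this step rather than reprove it.
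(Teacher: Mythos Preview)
The paper does not prove this proposition: it is stated as a citation of Austin \cite{austin2016} as sharpened by Cantrell and Furman \cite{cantrell2017}, with no argument given. Your proposal therefore goes well beyond the paper's treatment, and the outline you give is essentially the standard route to this result: build $\phi$ on the lattice $\Gr^{\ab}$ by Fekete, extend by the Lipschitz bound coming from $\thickbar c(x)\le b\|x\|_S$, identify the mean limit via the sublinear-commutator comparison, and then invoke Kingman for the single transformation $\tht_x$.

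Your diagnosis of the obstacle is exactly right. Kingman for $\tht_x$ only gives a $\tht_x$-invariant limit $\hat g_x$, and upgrading this to a constant requires the full $\Gr$-ergodicity; the comparison $c(x^n,\omega)$ versus $c((zxz^{-1})^n,\tht_z\omega)$ reduces to controlling $c(w_n,\tht_{x^n z}\omega)$ for words $w_n\in[\Gr,\Gr]$ of length $o(n)$, and under the bare $L^1$ hypothesis there is no Borel--Cantelli argument available to force this to be $o(n)$ almost surely. This is precisely the content of the Cantrell--Furman refinement you cite, and citing it here (as the paper itself does for the entire proposition) is the correct move. One minor point: in your Lipschitz bound the constant should come from $\thickbar c(x)\le b\|x\|_S$ combined with \eqref{eq:biLipschitz.abelian.norm}, giving $f(u)\le (b/\thickbar a)\|u\|$; the $\thickbar b$ you wrote is the Euclidean constant, not the cocycle bound.
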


\begin{remark}
    The function $\phi$ obtained above is naturally associated with the abelianized space considering the well-known fact of the convergence of $\frac{1}{n}\bull x^n$ to the projection of $x$ onto a subspace isomorphic to $\g_\infty^{\ab}$. It will allow us to measure distances in $G_\infty$ with $\ell_\phi$ by considering the rescaling of the subadditive cocyle.
\end{remark}

The bi-Lipschitz property established in the following lemma is crucial for the main results.

\begin{lemma} \label{lm:phi.bi-Lipschitz}
    Let $c: \Gr\times\Om\to\R_{\geq 0}$ be a subadditive cocycle under the assumptions of \cref{prop:subadditive.ergodic.thm}. Set $\phi$ as in \eqref{eq:def_phi}. Consider $c$ satisfying \eqref{all} and \eqref{aml}. Then there exist $a',b'>0$ such that, for all $x \in \Gr$,
    \[
    a'\|x^{\ab}\| \leq \phi(x^{\ab}) \leq b'\|x^{\ab}\|.
    \]
\end{lemma}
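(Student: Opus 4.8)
The plan is to establish the two inequalities separately, extracting both from the explicit description $\phi(x^{\ab})=\inf_{n\ge 1}\frac{1}{n}f(n\cdot x^{\ab})$ in \cref{prop:subadditive.ergodic.thm} together with the bi-Lipschitz comparison \eqref{eq:biLipschitz.abelian.norm} between $\|\cdot\|_S^{\ab}$ and the Euclidean norm $\|\cdot\|$ on $G^{\ab}\simeq\g_\infty^{\ab}$.

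For the upper bound I would take $n=1$ in \eqref{eq:def_phi}, so that $\phi(x^{\ab})\le f(x^{\ab})=\inf\{\thickbar{c}(y):y\in x[\Gr,\Gr]\}$. Choosing $y\in x[\Gr,\Gr]$ with $\|y\|_S=\|x\|_S^{\ab}$ (which exists because $\|\cdot\|_S^{\ab}$ is integer-valued) and using $\thickbar{c}(y)\le b\|y\|_S$ with $b=\max_{s\in S}\thickbar{c}(s)<\infty$ — finiteness being part of the $L^1$ hypothesis of \cref{prop:subadditive.ergodic.thm}, and also a consequence of the tail bound \eqref{all} — gives $\phi(x^{\ab})\le b\,\|x\|_S^{\ab}$. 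The left inequality in \eqref{eq:biLipschitz.abelian.norm} then yields $\phi(x^{\ab})\le (b/\thickbar{a})\|x^{\ab}\|$, so one may take $b':=\max\{b/\thickbar{a},\,1\}$.

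For the lower bound the only nontrivial case is $x\notin[\Gr,\Gr]$; when $x\in[\Gr,\Gr]$ one has $x^{\ab}=0$ and $\phi(0)=0$ by homogeneity, so the inequality is trivial. Assuming $x\notin[\Gr,\Gr]$, apply \eqref{aml} with $y=x$ to obtain a sequence $n_j\uparrow+\infty$ with $a\,\|x^{n_j}\|_S^{\ab}\le\E[c(x^{n_j})]=\thickbar{c}(x^{n_j})$ for every $j$. Since $x^{n_j}[\Gr,\Gr]=(x[\Gr,\Gr])^{n_j}$ and the identification of $\Gr^{\ab}$ with a lattice in the vector space $G^{\ab}$ turns the group law into addition, $\|x^{n_j}[\Gr,\Gr]\|=n_j\,\|x^{\ab}\|$; combining this with the right inequality in \eqref{eq:biLipschitz.abelian.norm} gives $\|x^{n_j}\|_S^{\ab}\ge \thickbar{b}^{-1}n_j\,\|x^{\ab}\|$. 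Hence $\frac{1}{n_j}\thickbar{c}(x^{n_j})\ge (a/\thickbar{b})\,\|x^{\ab}\|$ for all $j$, and letting $j\uparrow+\infty$ while using that $\frac{1}{n}\E[c(x^n)]\to\phi(x^{\ab})$ (the $L^1$ half of \cref{prop:subadditive.ergodic.thm}, of which the subsequence $\{n_j\}$ is a particular instance) we get $\phi(x^{\ab})\ge (a/\thickbar{b})\,\|x^{\ab}\|$; so $a':=a/\thickbar{b}$ works.

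The whole argument is routine once \cref{prop:subadditive.ergodic.thm} is in hand; the only steps needing a little attention are the observation that the limit defining $\phi$ exists along the full sequence of powers, so that it may be evaluated along the sparse subsequence $\{n_j\}$ provided by \eqref{aml}, and the linearity identity $\|x^{n}[\Gr,\Gr]\|=n\,\|x^{\ab}\|$, which is precisely where abelianization is exploited and which is what makes hypothesis \eqref{aml} yield a genuine lower bound on the rate $\phi$ rather than merely on the word length.
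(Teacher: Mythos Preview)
Your proof is correct and follows essentially the same route as the paper's: both use the formula \eqref{eq:def_phi} together with the comparison \eqref{eq:biLipschitz.abelian.norm}, bounding $\phi$ above via $\thickbar{c}(y)\le b\|y\|_S$ and below via condition \eqref{aml} along the subsequence $\{n_j\}$, with the constants $a'=a/\thickbar{b}$ and $b'=b/\thickbar{a}$ appearing in both arguments. The only cosmetic difference is that for the lower bound you pass to the limit through $\tfrac{1}{n_j}\thickbar{c}(x^{n_j})$ and invoke the $L^1$ convergence of \cref{prop:subadditive.ergodic.thm}, whereas the paper passes through $\tfrac{1}{n_j}f(n_j\cdot x^{\ab})$; your choice is arguably cleaner, since it uses \eqref{aml} only at the single representative $y=x$ rather than uniformly over the coset.
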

\begin{proof}
    Observe that condition \eqref{all} implies $c \in L^1(\Om,\F,\p)$. Consider $\thickbar{a},\thickbar{b}> 0$ as in \eqref{eq:biLipschitz.abelian.norm} and fix $a':= a/\thickbar{b}$. By \eqref{aml}, one has
    \[
        f(n_j\cdot x^{\ab}) = \inf_{y \in x[\Gr,\Gr]} \thickbar{c}(y) \geq a \|x^{n_j}\|_S^{\ab} \geq a'n_j\|x^{\ab}\|.
    \]

    We know by \cref{prop:subadditive.ergodic.thm} that $\phi$ exists and \[\phi(x^{\ab}) = \inf_{n\in\N}\frac{1}{n}\thickbar{c}(x^{\ab}) = \lim_{j\uparrow+\infty}
    \frac{1}{n_j}f(n_j\cdot x^{\ab}) \geq a'\|x^{\ab}\|.\]

    It follows from \eqref{all} and subaditivity that there exists $b>0$ such that, for all $x \in \Gr$,
    \[
        \thickbar{c}(x) \leq b \|x\|_S. 
    \]

    Let us fix $b':= b/\thickbar{a}$, then
    \[
        \phi(x^{\ab}) = \inf_{n\in\N}\frac{1}{n} \inf_{y\in x^n[\Gr,\Gr]}\thickbar{c}(y) \leq b \inf_{n\in\N}\frac{1}{n} \|x^n\|_S^{\ab} \leq b'\|x^{\ab}\|,
    \]
    which is our assertion.
\end{proof}

\begin{remark}
    The Subadditive Ergodic Theorem guarantees the $\p$-\textit{a.s.} existence of the $\lim_{n \uparrow +\infty}c(x^n)/n$. By combining this fact with previous assertions and the $L^1$ convergence, we obtain the existence of $0 <a \leq b<+\infty$ such that
    \begin{equation} \label{eq:mean.double.bound}
        a \|x\|_S^{\ab} \leq \thickbar{c}(x) \leq b \|x\|_S.
    \end{equation}
    
    Furthermore, one has from \eqref{eq:mean.double.bound} that $a \|x\|^{\ab}_S \leq \phi(x^{\ab})\leq  b \|x\|_S$ for all $x \in \Gr$. Since there exists $y \in x[\Gr,\Gr]$ with $\|x\|_S^{\ab} = \|y\|_S$ and $x^{\ab} = y^{\ab}$, one has by \eqref{eq:biLipschitz.abelian.norm}
    \[\frac{ ~ a ~ }{\thickbar{b}}\|x^{\ab}\| \leq a \|x\|^{\ab}_S \leq \phi(x^{\ab})\leq  b \|x\|_S^{\ab} \leq \frac{~ b ~}{\thickbar{a}}\|x^{\ab}\|.\]
\end{remark}

Recall the definition of $d_\phi$ in \eqref{eq:def.d.phi}. Therefore, there is a bi-Lipschitz relation between $d_\infty$ and $d_\phi$. We now define $\Phi: G_\infty \to [0,+\infty)$ by 
\[\Phi(\mathlcal{g}) :=d_\phi(\mathlcal{e},\mathlcal{g}).\]

The next subsection deals with one of the most relevant models to be considered in our context.

\subsection{First-Passage Percolation models} \label{sec:fpp} Hammersley and Welsh introduced the First-Passage Percolation (FPP) as a mathematical model in 1965 to study the spread of fluid through a porous medium. In FPP models, a graph with random edge weights is considered, where these weights represent the time taken for the fluid to pass through the corresponding edge. These concepts will be revisited in \cref{sec:additional.FPP} and illustrated with examples in \cref{sec:examples}.

Let $\mathcal{G}=(V,E)$ be a graph and set $\tau=\{\tau(\mathtt{e})\}_{\mathtt{e} \in E}$ to be a collection of non-negative random variables. We may regard each $\tau(u,v)$ as random length (also \textit{passage time} or \textit{weight}) of an edge $\{u,v\} \in E$. It turns $(\mathcal{G},\tau)$ into a random length space and it motivates the following construction.

The random passage time of a path $\gamma \in \mathscr{P}(x,y)$ is given by $T(\gamma) = \sum_{\mathtt{e} \in \gamma} \tau(\mathtt{e})$. Let us now define the first-passage time of $y$ with the process starting at $x$ by
\[
    T(x,y) := \inf_{\gamma \in \mathscr{P}(x,y)} T(\gamma).
\]

The random variable $T(x,y)$ is also known as \textit{first-hitting time}. Observe that $T(x,y)$ is a random intrinsic pseudometric, \textit{i.e.}, $x\neq y$ does not imply in $T(x,y)>0$. We can now consider the group action $\tht:\Gr\curvearrowright(\Om,\F,\p) $ as a translation such that $c(x):= T(e,x)$ is a subadditive cocycle (see \ref{eq:cocycle.metric}) with $\tau(x,sx)\circ\tht_{y} = \tau(xy^{-1},sxy^{-1})$ for all $x,y\in \Gr$ and $s \in S$ when $\mathcal{G}=\mathcal{C}(\Gr,S)$.

By requiring $\tht$ to be ergodic, we obtain for the FPP model that, for all $x \in \Gr$ and $s \in S$,
\[c(s)\circ\tht_x=\tau(x,sx)\sim  \tau(e,s)=c(s).\]
It also follows that $\tau(e, s) \sim \tau(e, s^{-1})$. Therefore, each direction of $\mathcal{C}(\Gr,S)$ determines a common distribution for its random lengths in a FPP model. \cref{ex:color} portraits an FPP model with dependend and identically distributed random lengths. While the FPP the random variables of \cref{ex:richardson} are independent but not identically distributed.

As passage times are preserved under translation, condition \eqref{innerness} is immediately satisfied, as it suffices to consider a geodesic path when $S = F(\varepsilon)$. However, other examples of subadditive interacting particle systems do not exhibit these properties. For instance, the Frog Model (see \cref{ex:frog}) can be described by a subadditive cocycle satisfying \eqref{all}, \eqref{aml}, and \eqref{aml2}. If we denote $\tau(x,sx) = |T(x)-T(sx)|$, then $\tau$ describes the growth of the process, and
\[ \tau(x,sx) \circ \theta_x \sim \tau(e,s) \quad\text{while} \quad \tau(x,sx) \not\sim \tau(e,s).\]

The results and properties highlighted above will be crucial in the study of the asymptotic shape and its applications in the subsequent discussions.

\section{The Limiting Shape} \label{sec:limiting.shape}
This section begins by introducing auxiliary results, offering essential tools to be employed in our subsequent analysis. Subsequently, our focus shifts to the proof of the two main theorems. The concluding subsection is dedicated to exploring a corollary specifically tailored for FPP models.

\subsection{Approximation of admissible curves along polygonal paths}

The proof strategy for the main theorem involves approximating geodesic curves with polygonal paths. Throughout the following discussion, we assume that $c$ is a subadditive cocycle, and $\Gr$ is finitely generated by the symmetric set $S$ with polynomial growth rate $D \geq 1$. To set the stage, we begin by stating Proposition 3.1 from \cite{cantrell2017}.

\begin{proposition} \label{polygonal.path} Let $\upgamma:[0,1] \to G_\infty$ be a Lipschitz curve and let $\Hat{\varepsilon} \in (0, 1)$. Then there exists $k_0=k_0(\upgamma,\Hat{\varepsilon})>0$ so that one can find, for all $k>k_0$ $\{y_j\}_{j=1}^k \subseteq \Gr$, $p>0$ and $n_0>0$ such that,  for all $n > n_0$,
\[
    \sum_{j=1}^kd_\infty\left(\frac{1}{np}\bull y_j^n y_{j-1}^n\dots y_1^n,\upgamma\left(\frac{j}{k}\right) \right) < \Hat{\varepsilon}
\]
 Moreover, for $\upphi:\g_\infty^{\ab} \to \R_{\geq 0}$ a subadditive homogeneous function bi-Lipschitz with respect to $\|-\|$, one has that
\[
\left|\frac{1}{p} \big(\upphi(y_k^{\text{ab}})+ \cdots + \upphi (y_1^{\text{ab}})\big)-\ell_\upphi(\upgamma)\right| < \Hat{\varepsilon}.
\]
\end{proposition}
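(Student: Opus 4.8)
The plan is to reduce the statement to a finite approximation problem. First I would discretize the Lipschitz curve $\upgamma:[0,1]\to G_\infty$: choose $k$ large and set $\mathlcal{g}_j := \upgamma(j/k)$, so that consecutive points are close, $d_\infty(\mathlcal{g}_{j-1},\mathlcal{g}_j) \leq \mathrm{Lip}(\upgamma)/k$. The increments $\mathlcal{h}_j := \mathlcal{g}_{j-1}^{-1}\mathlcal{g}_j$ then have $d_\infty(\mathlcal{e},\mathlcal{h}_j)$ of order $1/k$. Using \cref{lm:conv.seq.Gr}(i), for each increment $\mathlcal{h}_j$ pick a sequence in $\Gr$ realizing it under the $\frac{1}{n}\bull$ rescaling; after passing to a common scale (this is the role of the auxiliary parameter $p$ — it lets all $k$ sequences be compared at a single rescaling rate $np$), I obtain elements $y_j \in \Gr$ with $\frac{1}{p}\bull y_j \approx \mathlcal{h}_j$ in $G_\infty$. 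Here I use that $\frac{1}{n}\bull y_j^n \to (\exp_\infty\circ L\circ\pi\circ\log)(y_j)$ by \cref{lm:conv.seq.Gr}(iii), and that the product of rescaled powers converges to the product of the limits by \cref{lm:conv.seq.Gr}(ii). Telescoping, $\frac{1}{np}\bull y_k^n\cdots y_1^n \to \mathlcal{h}_k\cdots\mathlcal{h}_1 = \mathlcal{g}_k$ in $(G_\infty,d_\infty)$, which after choosing $k$ large (so $\mathlcal{g}_k=\upgamma(1)$ is the right endpoint and, applied to truncated products, the partial products track $\upgamma(j/k)$) and then $n$ large gives the first displayed inequality with error $<\widehat\varepsilon$.

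For the second estimate, the key observation is that the projection $\pi_\infty$ kills commutators, so on the abelianization the noncommutative correction terms disappear: $(y_k^n\cdots y_1^n)^{\ab}$ differs from $n(y_k^{\ab}+\cdots+y_1^{\ab})$ only by elements of $[\g_\infty,\g_\infty]_\infty$, which vanish under $\pi_\infty$. Concretely, since $\upphi$ is homogeneous and subadditive and bi-Lipschitz with respect to $\|-\|$, and since $\frac{1}{p}(y_k^{\ab}+\cdots+y_1^{\ab}) \approx \pi_\infty(\log_\infty \mathlcal{g}_k)$ is (up to the same discretization) the abelianized displacement along $\upgamma$, the Riemann sum $\frac{1}{p}\sum_j \upphi(y_j^{\ab})$ approximates $\int_0^1 \phi(\upgamma'(t))\,dt = \ell_\upphi(\upgamma)$. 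Making this precise requires that $\upgamma$ be well-approximated in the $\ell_\upphi$-length by the polygonal path through the $\mathlcal{g}_j$; this uses absolute continuity of the Lipschitz curve together with the bi-Lipschitz control of $\upphi$ by the Euclidean norm, so that length is continuous under the discretization.

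The main obstacle is bookkeeping the two independent limits and the scaling parameter $p$ simultaneously: one must choose $k=k_0(\upgamma,\widehat\varepsilon)$ first to control the discretization error of both the curve and its $\ell_\upphi$-length, then extract the finitely many sequences and the common scale $p$, and only then choose $n_0$ large enough that all $k$ of the convergences $\frac{1}{np}\bull y_j^n\cdots y_1^n \to \mathlcal{g}_j$ hold uniformly in $j\in\{1,\dots,k\}$. A secondary technical point is ensuring that truncating the product $y_k^n\cdots y_1^n$ at the $j$-th factor indeed rescales to $\mathlcal{g}_j$ and not to some reordered product — this is where right-equivariance and the associativity built into \cref{lm:conv.seq.Gr}(ii) are used, applied inductively on $j$. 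Since \cref{lm:conv.seq.Gr} and \cref{thm:Pansu} are already available, no genuinely new geometric input is needed; the argument is a careful quantitative assembly of these ingredients.
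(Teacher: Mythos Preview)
The paper does not supply its own proof of this proposition: it is quoted as Proposition~3.1 of Cantrell--Furman \cite{cantrell2017} and used as a black box. Your outline is essentially the argument one finds there, so there is nothing substantive to compare against within this paper.

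Two small points are worth tightening. First, with the right-invariant conventions in force (see \cref{thm:Pansu} and \cref{lm:conv.seq.Gr}(ii)), the increments that telescope are $\mathlcal{h}_j := \mathlcal{g}_j\,\mathlcal{g}_{j-1}^{-1}$, not $\mathlcal{g}_{j-1}^{-1}\mathlcal{g}_j$; with your definition the product $\mathlcal{h}_k\cdots\mathlcal{h}_1$ does not collapse to $\mathlcal{g}_k$ in a nonabelian $G_\infty$. Second, by \cref{lm:conv.seq.Gr}(iii) the limit $\lim_n \tfrac{1}{n}\bull y_j^{\,n}$ always lands in $\exp_\infty(\mathfrak{v}_1)$, so the increments $\mathlcal{h}_j$ must themselves be (approximately) horizontal before you can realize them as rescaled powers of a single group element; this is precisely where the hypothesis that $\upgamma$ is $d_\infty$-Lipschitz --- hence a horizontal curve in the Carnot group --- actually enters, and you should say so explicitly rather than only invoking \cref{lm:conv.seq.Gr}(i). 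Both issues are cosmetic and do not affect the structure of your argument.
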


The approximation technique outlined in the upcoming proposition will be utilized in the subsequent subsections. It extends the guarantees of the subadditive ergodic theorem for the decomposition of polygonal paths under certain properties.

\begin{proposition} \label{prop:ergodic.thm.path}
Let $\Gr$ be a torsion-free nilpotent finitely generated group with torsion-free abelianization. Consider $c:\Gr\times\Om \to\R_{\geq 0}$ a subadditive cocycle associated with an ergodic group action $\tht$ satisfying \eqref{all}. Then for all integer $j>1$ and  $\{y_i\}_{i=1}^j \subseteq \Gr$,
\[
	\lim_{n \uparrow \infty}\frac{1}{n}c(y_j^n) \circ\tht_{{y_{j-1}^n \dots y_1^n}} = \phi(y_j^{\text{ab}}) \quad \p-a.s.
\]
In particular, if we let $\Check{\varepsilon} \in (0,1)$, then there exists, $\p$-a.s., a random $M_0>0$ depending on $\Check{\varepsilon}$ and on $\sum_{i=1}^j\|y_i^{\ab}\|$ such that, for all $n > M_0$,
\[\left| \frac{1}{n}c(y_j^n, {y_{j-1}^n \dots y_1^n} \cdot \omega) ~- ~\phi(y_j^{\text{ab}})\right| < \Check{\varepsilon}.\]
\end{proposition}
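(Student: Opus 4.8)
The plan is to reduce the statement about the "shifted" cocycle $c(y_j^n)\circ\tht_{y_{j-1}^n\cdots y_1^n}$ to the unshifted subadditive ergodic theorem (\cref{prop:subadditive.ergodic.thm}) by showing that the shift does not affect the almost-sure limit. First I would fix $j>1$ and $\{y_i\}_{i=1}^j\subseteq\Gr$, abbreviate $w_n := y_{j-1}^n\cdots y_1^n$, and observe that by \cref{prop:subadditive.ergodic.thm} we already know $\tfrac1n c(y_j^n,\omega)\to\phi(y_j^{\ab})$ for $\p$-a.s. $\omega$; the task is to transfer this from $\omega$ to $w_n\cdot\omega$, where the point at which we evaluate the cocycle itself depends on $n$. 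The key tool is the moment/tail hypothesis \eqref{all}: for each fixed $x\in\Gr$ and $t>\upbeta\|x\|_S$ we have $\p(c(x)\ge t)\le g(t)$ with $g(t)\in\mathcal O(1/t^{2D+\upkappa})$, and this tail bound is translation-invariant since $c(x)\circ\tht_z$ has the same distribution as $c(x)$ under the p.m.p.\ action (this is the standard consequence of measure-preservation — I would state it explicitly).

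The main step is a Borel--Cantelli argument controlling the discrepancy. Using subadditivity \eqref{eq:subadditivity} in the form $c(y_j^n xy_j^{-n})\circ\tht_{y_j^n}\leq\dots$, or more directly comparing $c(y_j^n)\circ\tht_{w_n}$ with $c(y_j^n)\circ\tht_{w_n'}$ for two nearby base points, one bounds $|c(y_j^n)\circ\tht_{w_n} - c(y_j^n)|$ by a sum of cocycle values $c(z)\circ\tht_{(\cdot)}$ over generators $z\in S$ along a word for $w_n$; but $\|w_n\|_S$ grows only polynomially in $n$ (at most linearly, actually, in $\max_i\|y_i\|_S\cdot n$), so one has at most $Cn$ such terms, each with tail $\mathcal O(1/t^{2D+\upkappa})$. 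Then $\p\big(|c(y_j^n)\circ\tht_{w_n}-c(y_j^n)|\ge \varepsilon n\big)$ is bounded, via a union bound over the $Cn$ terms, by $Cn\cdot g(\varepsilon n /(Cn))$-type quantities — here I must be careful: the right comparison is not term-by-term but rather $\p(\exists$ a term $\ge \delta n)$. Since each term $c(z)\circ\tht_\bull$ is a single cocycle value with a fixed distribution having tail $\mathcal O(1/t^{2D+\upkappa})\subseteq\mathcal O(1/t^2)$, it is in $L^1$, and in fact $\sum_n \p(c(z)\circ\tht_{(\cdot_n)}\ge \varepsilon n)\le \sum_n g(\varepsilon n)<\infty$; combining with the union bound over $\le Cn$ positions and summing over $n$ still converges because $2D+\upkappa > 2$ gives $\sum_n n\cdot g(\varepsilon n) = \sum_n n\cdot\mathcal O(n^{-2D-\upkappa}) <\infty$ as $2D+\upkappa\ge 2+\upkappa>3$. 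Hence by Borel--Cantelli, $\p$-a.s.\ eventually $|c(y_j^n)\circ\tht_{w_n}-c(y_j^n)|<\varepsilon n$, and letting $\varepsilon\downarrow 0$ along a countable sequence gives $\tfrac1n|c(y_j^n)\circ\tht_{w_n}-c(y_j^n)|\to 0$ $\p$-a.s. Combined with $\tfrac1n c(y_j^n)\to\phi(y_j^{\ab})$, this yields the first displayed limit.

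For the quantitative "in particular" clause, I would simply unwind the almost-sure convergence just proved: given $\Check\varepsilon\in(0,1)$, the event that $|\tfrac1n c(y_j^n,w_n\cdot\omega)-\phi(y_j^{\ab})|<\Check\varepsilon$ holds for all $n>M_0$ has full probability for some finite random $M_0=M_0(\omega)$, and tracking the Borel--Cantelli estimate shows $M_0$ depends only on $\Check\varepsilon$ and on the data entering the tail bounds, namely $\sum_{i=1}^j\|y_i\|_S$ (equivalently $\sum_i\|y_i^{\ab}\|$ up to the bi-Lipschitz constants of \eqref{eq:biLipschitz.abelian.norm}), since the number of comparison terms $Cn$ is governed by $\|w_n\|_S\le n\sum_{i<j}\|y_i\|_S$ and the rate $\phi(y_j^{\ab})$ enters through $\|y_j^{\ab}\|$.

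The hard part will be making the comparison step rigorous: extracting from the one-sided subadditivity \eqref{eq:subadditivity} a genuine two-sided bound on $|c(y_j^n)\circ\tht_{w_n}-c(y_j^n)|$ in terms of only $\mathcal O(n)$ cocycle values over generators — the natural route writes $w_n$ as a word of length $\le n\sum_{i<j}\|y_i\|_S$ in $S$ and telescopes $c(y_j^n)\circ\tht_{s_k\cdots s_1}$ across consecutive generators using $|c(x)\circ\tht_{sz}-c(x)\circ\tht_z|\le c(s^{-1}x s)\circ\tht_{sz}\vee c(s)\circ\tht_z + \dots$, a routine but slightly delicate cocycle manipulation — together with verifying that the resulting union bound over $\mathcal O(n)$ terms, each summed over $n$, still converges, which is exactly where the exponent $2D+\upkappa$ (rather than a smaller power) is used. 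I would also note that for the FPP case (where $c(x)=T(e,x)$ and passage times are genuinely stationary) this step is immediate since $c(y_j^n)\circ\tht_{w_n}$ has \emph{the same distribution} as $c(y_j^n)$ for every $n$, so the argument there needs only that the diagonal sequence converges, which the subadditive ergodic theorem already provides after an application of the ordinary Borel--Cantelli lemma to the stationary sequence.
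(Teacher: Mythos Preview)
Your approach has a genuine gap. The telescoping/triangle-inequality bound you propose on $|c(y_j^n)\circ\tht_{w_n}-c(y_j^n)|$ cannot give $o(n)$. Writing $w_n=s_{k_n}\cdots s_1$ with $k_n=\|w_n\|_S\le n\sum_{i<j}\|y_i\|_S$, each increment satisfies
\[
\big|c(y_j^n)\circ\tht_{s z}-c(y_j^n)\circ\tht_{z}\big|\le c(s)\circ\tht_{s^{-1}z}+c(s^{-1})\circ\tht_{y_j^n z},
\]
so the total bound is a sum of $\sim 2k_n$ terms of the form $c(s^{\pm1})\circ\tht_{\bull}$. Since each such term has the same (positive-mean) distribution as $c(s^{\pm1})$, the sum is $\Theta(n)$, not $o(n)$. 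Your Borel--Cantelli step only controls $\p(\exists\,i\le Cn:\ c(s)\circ\tht_{z_i}\ge \varepsilon n)$, but even on the complement of that event the sum of $Cn$ bounded terms is still of order $n$. In the language of \cref{lm:local.bound}, you are trying to apply it with $u_n=w_n$ and $v_n=e$, but $d_S(u_n,v_n)=\|w_n\|_S\asymp n$ rather than $\le n\varepsilon$; that lemma simply does not cover a perturbation of full linear size. The FPP remark at the end suffers the same defect: equality in distribution of $c(y_j^n)\circ\tht_{w_n}$ and $c(y_j^n)$ for each fixed $n$ says nothing about almost-sure convergence along the diagonal $\omega\mapsto w_n\cdot\omega$.

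The paper's proof avoids this obstruction by never comparing to the unshifted cocycle. Instead it uses an ergodic recurrence argument (\cref{lm:measure.approx}) to produce, for a.e.\ $\omega$, small perturbations $0\le n_i<\xi n$ of the exponents such that the perturbed base point $y_{j-1}^{n-n_{j-1}}\cdots y_1^{n-n_1}\cdot\omega$ lands inside a ``good'' set on which the subadditive ergodic theorem already gives $|\tfrac1n c(y_j^n,\cdot)-\phi(y_j^{\ab})|<\varepsilon'$. The point is that the word distance between $y_{j-1}^{n}\cdots y_1^{n}$ and $y_{j-1}^{n-n_{j-1}}\cdots y_1^{n-n_1}$ is $O(\varepsilon' n)$ by \cref{lm:small.perturbations}, so now \cref{lm:local.bound} legitimately applies and yields the extra error $2\upbeta n\varepsilon'$. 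In short, the missing idea is that one must first move along the orbit to a nearby good configuration (at cost $\varepsilon' n$ in $d_S$, hence $\varepsilon' n$ in the cocycle), rather than trying to bridge the full distance $\|w_n\|_S\asymp n$ back to $\omega$.
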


Before proving \cref{prop:ergodic.thm.path} we show the following lemma.

\begin{lemma} \label{lm:local.bound}
    Let $\varepsilon \in (0,1)$ and consider a subadditive cocycle $c$ that satisfies condition \eqref{all}. There exists, $\p$-a.s., $M_1>0$ such that if $\{x_n\}_{n\in \N}$, $\{y_n\}_{n\in \N}$, $\{u_n\}_{n\in\N}$, and $\{v_n\}_{n \in \N}$ are sequences in $\Gamma$ satisfying, for a $n_0=n_0(\varepsilon) \in \N$ and all $n>n_0$:
    \begin{enumerate}[(i)]
        \item There exist elements $\mathlcal{x}, \mathlcal{u} \in G_\infty$ and $\mathtt{c}_{\mathlcal{x},\mathlcal{u}}>0$ such that
        \[
            d_\infty\left(\frac{1}{n}\bull x_n,\mathlcal{x}\right)< \varepsilon,  \quad d_\infty\left(\frac{1}{n}\bull u_n,\mathlcal{u}\right)< \varepsilon,
        \]
        and $ \
        \|x_n\|_S, \|u_n\|_S < \mathtt{c}_{\mathlcal{x},\mathlcal{u}}\cdot n$;
        
        \item $d_S(u_n,v_n)\leq n\varepsilon \quad \text{and}\quad d_S(x_nu_n,y_nv_n) \leq n\varepsilon$.
    \end{enumerate}
    Then
    \begin{equation*} 
        \big|c(x_n) \circ \tht_{u_n} - c(y_n)\circ\tht_{v_n}\big| < 2 \upbeta n \varepsilon
    \end{equation*}
    for all $n > \max\big\{n_0, M_1, \exp\big((2 \mathtt{c}_{\mathlcal{x},\mathlcal{u}} + 3)^D\big)\big\}$.
\end{lemma}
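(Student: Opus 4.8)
The plan is to control the difference $|c(x_n)\circ\tht_{u_n} - c(y_n)\circ\tht_{v_n}|$ by repeatedly invoking subadditivity \eqref{eq:subadditivity} together with the tail bound \eqref{all}, bounding each correction term by $\upbeta$ times a word-length that is $O(n\varepsilon)$. First I would write, using right-equivariance of $d_\omega$, the two quantities as increments of the random metric: $c(x_n)\circ\tht_{u_n} = d_\omega(u_n, x_n u_n)$ and $c(y_n)\circ\tht_{v_n} = d_\omega(v_n, y_n v_n)$ for the appropriate $\omega$. Then by the triangle inequality for $d_\omega$,
\[
\big| d_\omega(u_n, x_n u_n) - d_\omega(v_n, y_n v_n)\big| \le d_\omega(u_n, v_n) + d_\omega(x_n u_n, y_n v_n),
\]
so it suffices to show each of these two terms is at most $\upbeta n\varepsilon$ (giving $2\upbeta n\varepsilon$ total) once $n$ is large. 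Each term has the form $d_\omega(p_n, q_n) = c(q_n p_n^{-1})\circ\tht_{p_n}$ where, by hypothesis (ii), $\|q_n p_n^{-1}\|_S = d_S(p_n,q_n) \le n\varepsilon$.

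The core step is a union-bound argument. Fix one of the two increments, say $d_\omega(u_n,v_n) = c(w_n)\circ\tht_{u_n}$ with $\|w_n\|_S \le n\varepsilon$. By \eqref{all}, for each fixed group element $w$ with $\|w\|_S \le n\varepsilon$ and each translate, $\p\big(c(w)\circ\tht_{u} \ge \upbeta n\varepsilon\big) = \p\big(c(w)\ge \upbeta n\varepsilon\big) \le g(\upbeta n \varepsilon)$ as soon as $\upbeta n\varepsilon > \upbeta\|w\|_S$, which holds because $\|w\|_S\le n\varepsilon$ — strictly we apply \eqref{all} with $t = \upbeta n \varepsilon$ and note $t > \upbeta \|w\|_S$ whenever $\|w\|_S < n\varepsilon$, handling the boundary case by taking $t$ slightly above $\upbeta n\varepsilon$ or by the monotonicity of $g$. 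Now the number of candidate pairs $(w, u)$ matters: $w$ ranges over $B_S(e, n\varepsilon)$ and the relevant basepoints $u=u_n$ (and similarly the ones coming from the $x_nu_n$ term) lie in $B_S(e, \mathtt{c}_{\mathlcal{x},\mathlcal{u}} n + n\varepsilon) \subseteq B_S(e, (\mathtt{c}_{\mathlcal{x},\mathlcal{u}}+1)n)$ by (i). Using polynomial growth $|B_S(e,r)|\le \mathtt{k} r^D$, the total number of relevant pairs is at most $\mathtt{k}^2 (n\varepsilon)^D ((\mathtt{c}_{\mathlcal{x},\mathlcal{u}}+1)n)^D \le C n^{2D}$ for a constant $C$. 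Hence
\[
\p\Big(\exists\, w \in B_S(e,n\varepsilon),\, u\in B_S(e,(\mathtt{c}_{\mathlcal{x},\mathlcal{u}}+1)n) : c(w)\circ\tht_u \ge \upbeta n\varepsilon\Big) \le C\, n^{2D}\, g(\upbeta n\varepsilon).
\]
Since $g(t) \in \mathcal{O}(t^{-(2D+\upkappa)})$, the right side is $\mathcal{O}(n^{2D}\cdot n^{-(2D+\upkappa)}) = \mathcal{O}(n^{-\upkappa})$, which is summable in $n$. By Borel--Cantelli, $\p$-a.s. there is a (random) $M_1$ such that for all $n > M_1$ no such bad pair exists; in particular, for $n$ exceeding $M_1$, $n_0$, and large enough that the growth estimate $|B_S(e,r)|\le \mathtt{k}r^D$ is valid in the relevant range (which is where the $\exp((2\mathtt{c}_{\mathlcal{x},\mathlcal{u}}+3)^D)$-type threshold enters, ensuring the radii $n\varepsilon$ and $(\mathtt{c}_{\mathlcal{x},\mathlcal{u}}+1)n$ are above the scale where polynomial growth bounds hold), both increments $d_\omega(u_n,v_n)$ and $d_\omega(x_nu_n,y_nv_n)$ are below $\upbeta n\varepsilon$, giving the claim.

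The main obstacle is bookkeeping the quantifiers correctly so that $M_1$ is genuinely independent of the particular sequences $\{x_n\},\{y_n\},\{u_n\},\{v_n\}$ and depends only on $\varepsilon$ (and the ambient constants $\upbeta,\upkappa,D,\mathtt{k}$, plus $\mathtt{c}_{\mathlcal{x},\mathlcal{u}}$): this is exactly why the union bound must range over all of $B_S(e,n\varepsilon)$ and all admissible basepoints rather than over the specific sequence values, and it is the reason the polynomial-growth factor $n^{2D}$ appears — it is precisely balanced against the decay $g(t)=\mathcal{O}(t^{-(2D+\upkappa)})$ built into hypothesis \eqref{all}, with $\upkappa>1$ guaranteeing summability. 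A secondary technical point is the exact form of the explicit lower threshold $\exp((2\mathtt{c}_{\mathlcal{x},\mathlcal{u}}+3)^D)$: I expect this to come from requiring that $\log$ of the ball volume be controlled, or that certain comparison constants between $d_S$ and $\|\cdot\|_S^{\ab}$ / the cone metric stabilize, and I would reverse-engineer it from the inequalities above rather than guess it upfront.
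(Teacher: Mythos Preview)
Your decomposition via subadditivity (equivalently, the triangle inequality for $d_\omega$) and the Borel--Cantelli argument are exactly right, and match the paper's approach. However, there is one genuine gap in your bookkeeping, and it is precisely the point you flagged as unresolved.

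Read the quantifiers in the statement: $M_1$ is produced \emph{before} the sequences $\{x_n\},\{u_n\},\ldots$ (and hence before $\mathtt{c}_{\mathlcal{x},\mathlcal{u}}$) are given. So $M_1$ must depend only on $\varepsilon$, not on $\mathtt{c}_{\mathlcal{x},\mathlcal{u}}$. In your union bound you let the basepoint range over $B_S\big(e,(\mathtt{c}_{\mathlcal{x},\mathlcal{u}}+1)n\big)$; the resulting bad event, and hence the random last time it occurs, depends on $\mathtt{c}_{\mathlcal{x},\mathlcal{u}}$. The paper resolves this by enlarging the basepoint range to a slightly superlinear, but $\mathtt{c}_{\mathlcal{x},\mathlcal{u}}$-free, ball: it bounds
\[
\big|c(x_n)\circ\tht_{u_n}-c(y_n)\circ\tht_{v_n}\big|
\ \le\ 2\sup_{\substack{\|y\|_S\le n\varepsilon\\ \|z\|_S\le n\sqrt[D]{\log n}}}
c(y)\circ\tht_z,
\]
which is legitimate once $n\sqrt[D]{\log n}>(2\mathtt{c}_{\mathlcal{x},\mathlcal{u}}+3)n$, i.e.\ once $n>\exp\big((2\mathtt{c}_{\mathlcal{x},\mathlcal{u}}+3)^D\big)$. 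This is the origin of that deterministic threshold you could not reverse-engineer: it is exactly the condition ensuring the fixed superlinear ball swallows the $\mathtt{c}_{\mathlcal{x},\mathlcal{u}}$-dependent linear ball containing $u_n,v_n,x_nu_n,y_nv_n$. The union bound then gives probability at most $\mathtt{C}\,n^{2D}\log(n)\,g(\upbeta n\varepsilon)\in\mathcal{O}\big(\log(n)/n^{\upkappa}\big)$, still summable because $\upkappa>1$, and the Borel--Cantelli $M_1$ now depends only on $\varepsilon$. With this single modification your argument goes through and coincides with the paper's proof.
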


\begin{proof}
    Fix $a_n :=v_nu_n^{-1}$ and $b_n := y_nv_n(x_nu_n)^{-1}$. Then $\|a_n\|_S= \|a_n^{-1}\|_S = d_S(u_n,v_n)$ and $\|b_n\|_S= \|b_n^{-1}\|_S = d_S(x_nu_n,y_nv_n)$.
    Observe that $y_n = b_n x_n a_n^{-1}$ and $x_n = b_n^{-1}y_na_n$. We thus obtain the $\p$-almost surely inequalities below:
    \begin{align}
        c(y_n)\circ \tht_{v_n} &\leq c(b_n)\circ\tht_{x_nu_n} + c(x_n)\circ \tht_{u_n} + c(a_n^{-1})\circ \tht_{v_n} \label{cocycle:lm:p1}\\
        c(x_n)\circ \tht_{u_n} &\leq c(b_n^{-1})\circ\tht_{y_nv_n} + c(y_n)\circ\tht_{v_n} + c(a_n)\circ\tht_{u_n}. \label{cocycle:lm:p2}
    \end{align}
    Observe now that, by items \textit{(i)} and \textit{(ii)}, for $n > n_0(\varepsilon)$,
    \[
        x_n u_n,y_n v_n, u_n, v_n \in B_S\left(e, 2\mathtt{c}_{\mathlcal{x},\mathlcal{u}}\cdot n + 3n\varepsilon\right) .
    \]

    Hence, by combining \eqref{cocycle:lm:p1} and \eqref{cocycle:lm:p2},
    
    \begin{equation} \label{eq:difference.cocycle.parallel}
            |c(x_n)\circ \tht_{u_v}- c(y_n)\circ \tht_{v_n}| \leq \hspace{10pt}2 \hspace{-15pt} \sup_{\substack{\|y\|_S \leq n\varepsilon \\ \|z\|_S \leq \sqrt[D]{\log(n)}n}} \{c(y)\circ\tht_{z}\}
    \end{equation}
    for all $n > \max\big\{n_0, \exp\big((2 \mathtt{c}_{\mathlcal{x},\mathlcal{u}} + 3)^D\big)\big\}$. It follows from \eqref{all} that there exists $\mathtt{C}>0$ such that
    \begin{equation} \label{prob.sup.parallel.substitute}
    \p\left( \sup_{\substack{\|y\|_S \leq n\varepsilon \\ \|z\|_S \leq \sqrt[D]{\log(n)}n}} \{c(y)\circ\tht_{z}\} \geq \upbeta n\varepsilon  \right) \leq \mathtt{C} {n^{2D}}\log(n)g(\upbeta\varepsilon n) \in \mathcal{O}(\log(n)/n^\upkappa),
    \end{equation}
    for $n >\max\big\{n_0, \exp\big((\|\mathlcal{x}\|_\infty + \|\mathlcal{u}\|_\infty + 3)^D\big)\big\}$. Since $\sum_{n=1}^{+\infty}\frac{\log(n)}{n^\upkappa} = -\upzeta'(\upkappa) < +\infty$ for $\upkappa>1$ where $\upzeta'$ is the derivative of the Riemann zeta function, the proof is completed by applying Borel-Cantelli Lemma to  \eqref{eq:difference.cocycle.parallel} and \eqref{prob.sup.parallel.substitute}.
\end{proof}

\begin{remark}
    If $\{x_n\}_{n\in\N},\{u_n\}_{n\in\N} \subseteq \Gr$ are such that $\lim_{n\uparrow+\infty}\frac{1}{n}\bull x_n = \mathlcal{x}$ and $\lim_{n\uparrow+\infty}\frac{1}{n}\bull u_n = \mathlcal{u}$ in $(G_\infty,d_\infty)$, then item (i) of \cref{lm:local.bound} is immediately satisfied (see \cref{sec:asymptotic.cone}). 
\end{remark}

Using the lemma above, the \cref{prop:ergodic.thm.path} becomes a straightforward extension of Theorem 3.3 of \cite{cantrell2017}. The result can be verified by replacing the Parallelogram inequality with \cref{lm:local.bound}. To be self-contained, let us first define, for each $E \in \F$,  $\omega \in \Om$, $x\in \Gr$, $\xi>0$, and $n\in\N$,
\[\mathsf{N}_{x,n}^\xi(E,\omega) := \#\big\{n' \in\{0, 1, \dots, 
\lceil\xi n\rceil-1\} \colon \tht_{x^{n-n'}}(\omega) \in E\big\}.\]
Set
\begin{align*}
&\Xi^\star(E,x,\xi) := \left\{ \omega \in \Om \colon \liminf_{n\uparrow+\infty}\frac{\mathsf{N}_{x,n}^\xi(E,\omega)}{\xi n}>0 \right\}, \quad\text{and}\\ &\Xi_m^\star(E,x,\xi) := \left\{ \omega \in \Om \colon \forall n\ge m\left(\frac{\mathsf{N}_{x,n}^\xi(E,\omega)}{\xi n}>0\right) \right\}.
\end{align*}

We now state Lemma 3.6 of \cite{cantrell2017} without proof before the proving \cref{prop:ergodic.thm.path}.

\begin{lemma} \label{lm:measure.approx}
    Let $x\in\Gr$, $\xi >0$, and $E \in \F$. Then, for all $\varepsilon\in(0,1)$, there is $m_0>0$ such that, for $m>m_o$,
    \[\p\big(\Xi^\star(E,x,\xi)\big) \geq \p(E)\quad\text{and}\quad \p\big(\Xi_m^\star(E,x,\xi)\big) > \p\big(\Xi^\star(E,x,\xi)\big)-\varepsilon.\]
\end{lemma}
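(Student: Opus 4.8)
The plan is to read both inequalities off Birkhoff's pointwise ergodic theorem applied to the single $\p$-preserving transformation $T:=\tht_x$, supplemented by continuity of the measure. First I would rewrite the counting function as a two-sided Birkhoff sum: since $\tht_{x^{n-n'}}=T^{n-n'}$,
\[
\mathsf{N}_{x,n}^\xi(E,\omega)=\sum_{j=n-\lceil\xi n\rceil+1}^{n}\mathbbm{1}_E\big(T^{j}\omega\big),
\]
with the index window dipping below $0$ when $\xi\ge 1$ (harmless, $T$ being invertible). Let $\mathcal{I}$ be the $\sigma$-algebra of $T$-invariant events --- equivalently of $T^{-1}$-invariant events --- and set $\mathbbm{1}_E^{\ast}:=\E[\mathbbm{1}_E\mid\mathcal{I}]$. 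Applying Birkhoff to $T$ and to $T^{-1}$, the forward average $\frac1N\sum_{j=1}^{N}\mathbbm{1}_E(T^{j}\omega)$ and the backward average $\frac1N\sum_{j=-N}^{0}\mathbbm{1}_E(T^{j}\omega)$ both converge $\p$-a.s.\ to $\mathbbm{1}_E^{\ast}(\omega)$. Splitting $\mathsf{N}_{x,n}^\xi(E,\omega)$ into such partial sums with $\lceil\xi n\rceil\sim\xi n$ terms, dividing by $\xi n$, and using the elementary identities $\frac1\xi-\frac{1-\xi}{\xi}=1$ (for $\xi<1$) and $\frac1\xi+\frac{\xi-1}{\xi}=1$ (for $\xi\ge 1$), I obtain $\frac1{\xi n}\mathsf{N}_{x,n}^\xi(E,\omega)\to\mathbbm{1}_E^{\ast}(\omega)$ $\p$-a.s.

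Given this a.s.\ limit, the first inequality is immediate: up to a null set $\Xi^\star(E,x,\xi)=\{\mathbbm{1}_E^{\ast}>0\}$, and since $\{\mathbbm{1}_E^{\ast}=0\}\in\mathcal{I}$ the defining property of conditional expectation gives $\p\big(E\cap\{\mathbbm{1}_E^{\ast}=0\}\big)=\int_{\{\mathbbm{1}_E^{\ast}=0\}}\mathbbm{1}_E^{\ast}\,d\p=0$, hence $\p(E)=\p\big(E\cap\{\mathbbm{1}_E^{\ast}>0\}\big)\le\p\big(\Xi^\star(E,x,\xi)\big)$. For the second inequality I would use that $\Xi_m^\star(E,x,\xi)$ is nondecreasing in $m$ and that $\liminf_n\frac{\mathsf{N}_{x,n}^\xi(E,\omega)}{\xi n}>0$ forces $\mathsf{N}_{x,n}^\xi(E,\omega)>0$ for all large $n$ (as $\xi n\to\infty$); thus $\Xi^\star(E,x,\xi)\subseteq\bigcup_{m\ge1}\Xi_m^\star(E,x,\xi)$, and continuity of $\p$ from below yields $\p\big(\Xi_m^\star(E,x,\xi)\big)\uparrow\p\big(\bigcup_m\Xi_m^\star(E,x,\xi)\big)\ge\p\big(\Xi^\star(E,x,\xi)\big)$. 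Then any $m_0$ past which the increasing sequence comes within $\varepsilon$ of its supremum does the job.

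The only real subtlety --- the step I expect to need the most care --- is that $T=\tht_x$ need not be ergodic even though the full action $\tht$ is, because the cyclic subgroup $\langle x\rangle$ may carry a nontrivial invariant $\sigma$-algebra; this is exactly why the Birkhoff limit has to be taken as $\E[\mathbbm{1}_E\mid\mathcal{I}]$ rather than the constant $\p(E)$, and why the statement asserts $\p(\Xi^\star(E,x,\xi))\ge\p(E)$ rather than equality. A minor additional bookkeeping point is the ceiling $\lceil\xi n\rceil$ and the case $\xi\ge 1$, which only affects the two elementary identities above and forces one to invoke Birkhoff for $T^{-1}$ as well.
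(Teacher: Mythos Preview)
The paper does not actually prove this lemma: it is quoted verbatim as Lemma~3.6 of \cite{cantrell2017} and explicitly ``stated without proof'' just before the proof of \cref{prop:ergodic.thm.path}. So there is no in-paper argument to compare against.

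Your argument is correct and is essentially the expected one. The key observations---writing $\mathsf{N}_{x,n}^\xi$ as a difference (or sum) of Birkhoff averages for $T=\tht_x$, using the general (non-ergodic) form of Birkhoff's theorem with limit $\E[\mathbbm{1}_E\mid\mathcal{I}]$, the containment $E\subseteq\{\mathbbm{1}_E^\ast>0\}$ up to a null set via $\int_{\{\mathbbm{1}_E^\ast=0\}}\mathbbm{1}_E\,d\p=0$, and monotone continuity for $\Xi_m^\star$---are exactly right. Your remark that $\langle x\rangle$ need not act ergodically even when $\Gr$ does, and that this is precisely why only $\ge\p(E)$ is claimed, is also on point. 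The bookkeeping with $\lceil\xi n\rceil$ and the case $\xi\ge1$ (invoking Birkhoff for $T^{-1}$, which shares the same invariant $\sigma$-algebra as $T$) is handled adequately.
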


We proceed below with the proof of ergodic subadditive approximation via polygonal paths.

\begin{proof}[Proof of \cref{prop:ergodic.thm.path}]
    Consider $\varepsilon' \in (0,1)$ and  $\{y_i\}_{i=1}^j \subseteq \Gr$ fixed. Let $\xi>0$ and $\thickbar{n}\in\N$ be given by \cref{lm:small.perturbations} for $\varepsilon=\varepsilon'$. Set $\eta \in (0,\frac{1}{2j})$ and $m\in \N$ sufficiently large so that, for each $i \in \{1, \dots, j\}$, one has by \cref{prop:subadditive.ergodic.thm},
    \[\mathcal{X}_i:=\left\{\omega\in\Om \colon \forall n>m \left(\left\vert\frac{1}{n} c(y_i^n,\omega) - \phi (y_i^{\ab})\right\vert< \Check{\varepsilon}\right)\right\} \ \text{ and } \ \p(\mathcal{X}_i) > 1- \eta.\]

    Fix $\mathcal{Y}_j := \mathcal{X}_j$ and define inductively $\mathcal{Y}_{i-1} := \mathcal{X}_{i-1} \cap \Xi_{m_i}^\ast(\mathcal{Y}_i,y_i,\xi)$ so that, for each $i \in \{2, \dots, j\}$,  $m_i \in \N$ is given by \cref{lm:measure.approx} satisfying \[\p\big(\Xi_{m_i}^\ast(\mathcal{Y}_i,y_i, \xi)\big) \geq \p(\mathcal{Y}_i)- \eta.\]

    Therefore, 
    \[\p(\mathcal{Y}_1) > \p(\mathcal{Y}_2) - 2\eta > \cdots > \p(\mathcal{Y}_j) - 2(j-1) \eta > 1 - (2j-1) \eta. \]

    Let now $\Check{m} := \max \{m, m_1, \dots, m_j\}$. Thence, for all $\varpi_i \in \mathcal{Y}_i$ and every $n>\Check{m}$ with $i \in \{1, \dots, j-1\}$, if $n_i<\xi n$, then $\tht_{y^{n-n_i}}(\varpi_i)= y^{n-n_i} \cdot \varpi_i \in \mathcal{Y}_{i+1}$, and
    \[\left\vert \frac{1}{n} c(y_{i+1}^n, \varpi_i) - \phi(y_{i+1}^{\ab}) \right\vert< \varepsilon'.\]

    It follows that, for all $n>\Check{m}$, there exist non negative integers $n_1, \dots, n_{j-1} < \xi n$ such that, for all $\omega \in \mathcal{Y}_1$,
    \[\left\vert \frac{1}{n} c(y_{j}^n, y_{j-1}^{n-n_{j-1}}\cdots y_1^{n-n_1}\cdot \omega) - \phi(y_{j}^{\ab}) \right\vert< \varepsilon'.\]

    By \cref{lm:small.perturbations}, for each $i \in \{2, \dots, j\}$ and every $n>\thickbar{n}$,
    \begin{eqnarray*}
        d_S\left(y_{i-1}^n\cdots y_1^n, ~y_{i-1}^{n-n_{i-1}}\cdots y_1^{n-n_1}\right)< n \varepsilon' \quad \text{and}\\
        d_S\left(y_i^n y_{i-1}^n\cdots y_1^n, ~y_i^n y_{i-1}^{n-n_{i-1}}\cdots y_1^{n-n_1}\right)< n \varepsilon'.
    \end{eqnarray*}

    Hence, by \cref{lm:conv.seq.Gr,lm:local.bound}, there exists $\Om_j \in \F$ with $\p(\Om_j)=1$, and a random $\thickbar{M}_i\geq \thickbar{n}$ depending on $\varepsilon'$, and $\|y_i^{\ab}y_{i-1}^{\ab}\dots y_1^{\ab}\|_\infty \vee \|y_{i-1}^{\ab}\dots y_1^{\ab}\|_\infty$ such that for all $n> \thickbar{M}_i$ and all $\omega \in \Om_j$, 
    \[\left\vert c(y_i^n, ~y_{i-1}^n \cdots y_1^n \cdot \omega) - c(y_i^n, ~y_{i-1}^{n-n_{i-1}} \cdots y_1^{n-n_1} \cdot \omega) \right\vert< 2 \upbeta n \varepsilon'.\]

    Therefore, for all $\omega \in \Om_j \cap \ \mathcal{Y}_1$, every $n> \check{m} \vee \thickbar{M}_i$ and all $i \in \{2, \dots, j\}$
    \begin{equation} \label{eq:erg.ineq.polygonal}
        \left\vert \frac{1}{n}c(y_i^n, ~y_{i-1}^n \cdots y_1^n \cdot \omega) - \phi(y_i^{\ab}) \right\vert< (2 \upbeta +1) \Check{\varepsilon},
    \end{equation}
    and $\p\left(\Om_j \cap \ \mathcal{Y}_1\right) > 1-(2j-1)\eta$. It suffices to consider $\eta_n \downarrow 0$ replacing $\eta  \in (0, \frac{1}{2j})$ with $\sum_{n \in \N} \eta_n < +\infty$, then there exists, $\p$-a.s.,  $M_0 \geq \check{m} \vee \thickbar{M}_i$ by Borel-Cantelli Lemma such that \eqref{eq:erg.ineq.polygonal} is satisfied for all $n > M_0$, which is our assertion with $i=j$ and $\Check{\varepsilon}= \frac{1}{2\upbeta+1}\varepsilon'$.
\end{proof}

\subsection{Proof of the first theorem} \label{sec:main.proofs}

This subsection is dedicated to proving \cref{shape.thm}. Therefore, consider all conditions and notations established in the first main theorem for the subsequent results. For instance, here $\Gr$ is torsion-free nilpotent with torsion-free abelianization. Before turning to the proof of the theorem, let us refine the techniques of approximation as outlined in the upcoming propositions and lemmas.

\begin{proposition} \label{prop:asymptotic.approx}
    Let $\mathlcal{g} \in G_\infty$ and $\epsilon \in(0,1)$. Consider $\{y_j\}_{j=1}^k \subseteq \Gr$ and $p>0$ given by \cref{polygonal.path} for a $d_\infty$-geodesic curve $\upgamma:[0,1] \to G_\infty$ from $\mathlcal{e}$ to $\mathlcal{g}$ and $\Hat{\varepsilon}=\epsilon/2$.
    
    If conditions \eqref{all}, \eqref{aml}, and \eqref{innerness} are satisfied, then there exists, $\p$-a.s., $M_2>0$ depending on $\mathlcal{g}$, $\epsilon$, and $\omega \in \Om$, 
such that, for all $n> M_2$,
\[
    \left|\frac{1}{pn}c(y_k^n \cdots y_1^n) - \ell_\phi(\upgamma)\right| <  \epsilon.
\]
\end{proposition}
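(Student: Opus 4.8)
The plan is to sandwich $\frac{1}{pn}c(y_k^n\cdots y_1^n)$ between quantities that converge to $\ell_\phi(\upgamma)$, exploiting the two halves of Proposition~\ref{polygonal.path}: the metric estimate controlling $\frac{1}{np}\bull y_j^n\cdots y_1^n$ against $\upgamma(j/k)$, and the length estimate $|\frac{1}{p}\sum_j \phi(y_j^{\ab}) - \ell_\phi(\upgamma)| < \epsilon/2$. First I would invoke Proposition~\ref{prop:ergodic.thm.path} (with $\Check{\varepsilon}$ chosen small, say $\Check{\varepsilon} = \frac{\epsilon}{4k p}$ or similar, and applied to the tuple $\{y_i\}_{i=1}^k$) to get a $\p$-a.s. random $M_0$ such that, for every $i\in\{1,\dots,k\}$ and $n > M_0$,
\[
    \left|\frac{1}{n}c\big(y_i^n,\, y_{i-1}^n\cdots y_1^n\cdot\omega\big) - \phi(y_i^{\ab})\right| < \Check{\varepsilon}.
\]
By the cocycle identity \eqref{eq:subadditivity}, $c(y_k^n\cdots y_1^n,\omega) \leq \sum_{i=1}^k c(y_i^n,\, y_{i-1}^n\cdots y_1^n\cdot\omega)$, so summing the upper halves of these $k$ inequalities gives $\frac{1}{n}c(y_k^n\cdots y_1^n,\omega) \leq \sum_{i=1}^k \phi(y_i^{\ab}) + k\Check{\varepsilon}$, and dividing by $p$ and applying the second bound of Proposition~\ref{polygonal.path} yields the upper bound $\frac{1}{pn}c(y_k^n\cdots y_1^n) < \ell_\phi(\upgamma) + \frac{\epsilon}{2} + \frac{k\Check{\varepsilon}}{p} \leq \ell_\phi(\upgamma) + \epsilon$ for $n$ large.

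The lower bound is where condition \eqref{innerness} enters and is the main obstacle. The point is that $c(y_k^n\cdots y_1^n)$ could in principle be much smaller than $\sum_i c(y_i^n, \cdots)$; we need a matching lower bound on the cocycle of the product in terms of $\ell_\phi(\upgamma)$. The natural route is: since $\upgamma$ is a $d_\infty$-geodesic from $\mathlcal{e}$ to $\mathlcal{g}$ and (by Lemma~\ref{lm:conv.seq.Gr}(ii) together with Proposition~\ref{polygonal.path}) $\frac{1}{np}\bull y_k^n\cdots y_1^n \to$ a point within $\Hat\varepsilon$ of $\mathlcal{g}=\upgamma(1)$, one wants to compare $c$ with $d_\phi$. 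Here I would use the bi-Lipschitz content of Lemma~\ref{lm:phi.bi-Lipschitz}/\eqref{eq:mean.double.bound} only as a crude a priori bound, and instead argue that the innerness decomposition \eqref{innerness} lets us replace $x := y_k^n\cdots y_1^n$ by a product $z_m\cdots z_1$ over the fixed finite generating set $F(\upvarepsilon)$ with $\sum_i c(z_i, z_{i-1}\cdots z_1\cdot\omega) \leq (1+\upvarepsilon)c(x,\omega)$; applying Proposition~\ref{prop:ergodic.thm.path}-type control along this decomposition — or more precisely the subadditive ergodic theorem applied to suitable powers — bounds $\sum_i c(z_i,\cdots)$ below by (a constant times) the $\phi$-length of the polygonal path through the $z_i$, which in turn is at least $\ell_\phi$ of a curve close to $\upgamma$. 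Since $\upgamma$ is $\ell_\phi$-geodesic up to the bi-Lipschitz comparison with $d_\infty$, and the endpoint is within $\Hat\varepsilon$ of $\mathlcal{g}$, this gives $\frac{1}{pn}c(y_k^n\cdots y_1^n) > \ell_\phi(\upgamma) - \epsilon$ for $n$ large, after absorbing $\upvarepsilon$, $\Hat\varepsilon$ into $\epsilon$.

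Combining the two bounds on the $\p$-a.s. event of full measure where all the invoked almost-sure statements hold, and setting $M_2$ to be the maximum of the finitely many random thresholds produced above, gives $\left|\frac{1}{pn}c(y_k^n\cdots y_1^n) - \ell_\phi(\upgamma)\right| < \epsilon$ for all $n > M_2$, as claimed. The delicate points to get right are: (a) the quantifier order — $k$, $p$, and the $y_j$'s are fixed \emph{before} the almost-sure threshold, so Proposition~\ref{prop:ergodic.thm.path} applies cleanly; (b) making sure the innerness set $F(\upvarepsilon)$ and the decomposition length $m$ do not blow up with $n$ (they do not, since \eqref{innerness} provides a fixed generating set, though $m = m(x,\omega)$ may grow — one must check the ergodic estimate is uniform enough, which is exactly what the "depending on $\sum\|y_i^{\ab}\|$" clause in Proposition~\ref{prop:ergodic.thm.path} is designed to handle); and (c) tracking that $\ell_\phi(\upgamma)$, not merely $\Phi(\mathlcal{g})$, is the target — which is automatic because $\upgamma$ was chosen geodesic.
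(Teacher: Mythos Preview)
Your upper bound is exactly right and matches the paper: subadditivity plus \cref{prop:ergodic.thm.path} with $\Check\varepsilon$ of order $p\epsilon/k$ gives $\frac{1}{pn}c(\mathbf{y}_n) < \ell_\phi(\upgamma)+\epsilon$.

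The lower bound, however, has a genuine gap. The innerness decomposition \eqref{innerness} writes $\mathbf{y}_n = z_{n,k_n}\cdots z_{n,1}$ with $z_{n,i}\in F(\upvarepsilon)$, but this is \emph{not} of the form to which \cref{prop:ergodic.thm.path} applies: there the elements $y_1,\dots,y_j$ are \emph{fixed} and one takes powers $y_i^n$, whereas here the $z_{n,i}$ are single letters, they depend on $n$ and on $\omega$, and the number $k_n$ of them grows (roughly linearly) with $n$. Your reading of the ``depending on $\sum\|y_i^{\ab}\|$'' clause is off --- that clause records how the threshold $M_0$ depends on a \emph{fixed} tuple, not a uniformity over decompositions of unbounded length. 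So the step ``apply \cref{prop:ergodic.thm.path}-type control along this decomposition \dots\ bounds $\sum_i c(z_i,\cdots)$ below by the $\phi$-length of a polygonal path'' is exactly the missing argument, not a routine verification.

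What the paper actually does for the lower bound is substantially more elaborate. First it shows $k_n\le \mathtt{C}_{\upgamma,\upvarepsilon}\,n$ $\p$-a.s.\ for large $n$ (via an expectation bound coming from \cref{lm:phi.bi-Lipschitz}, a Chernoff bound, and Borel--Cantelli). This makes the piecewise-$d_\infty$-geodesic curves $\zeta_n$ through the rescaled partial products $\frac{1}{pn}\bull z_{n,j}\cdots z_{n,1}$ uniformly Lipschitz, so Arzel\`a--Ascoli yields a subsequential limit curve $\zeta:[0,1]\to G_\infty$ from $\mathlcal{e}$ to $\mathlcal{g}$. One then applies \cref{polygonal.path} \emph{a second time}, now to $\zeta$, producing a \emph{fixed} finite family $\{w_i\}_{i=1}^{k'}$; a subdivision function $\mathsf{d}_n$ matches the $z$-decomposition to the $w$-polygon, and \cref{lm:local.bound} transfers the cocycle values. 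Only at that point is the situation back in the format of \cref{prop:ergodic.thm.path}, yielding $\frac{1}{pn}c(\mathbf{y}_n) > (1-\upvarepsilon)\big(\ell_\phi(\zeta)-(4\upbeta+1)\upvarepsilon\big)$, and one concludes using $\ell_\phi(\zeta)\ge \ell_\phi(\upgamma)-\epsilon/2$. The compactness step (Arzel\`a--Ascoli) and the re-polygonalisation of the limit curve are the ideas your sketch is missing.
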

\begin{proof}
    Let us write $\mathbf{y}_n:= y_k^n\dots y_1^n$ and consider $n_0>0$ for $\Hat{\varepsilon}=\epsilon/2$
    given by \cref{polygonal.path}. It follows from subadditivity that
    \[
        c(\mathbf{y}_n) \leq  \sum_{j=1}^k c(y_{j}^n) \circ \tht_{y_{j-1}^n \dots y_1^{n}}  \quad \p\text{-a.s.}
    \]
    Then, one has by \cref{prop:ergodic.thm.path} with $\Check{\varepsilon} \le \frac{p}{2k}\epsilon$ that, $\p$-a.s., for all $n>M_0 \vee n_0$,
    \begin{equation} \label{eq:upper.bound.path.cocycle}
        \frac{1}{pn}c(\mathbf{y}_n) \leq \frac{1}{p}\sum_{j=1}^k \phi(y_n^{\ab}) + \frac{\epsilon}{2} < \ell_\phi(\upgamma)+ \epsilon.
    \end{equation}
    Set $\upvarepsilon \in (0,1)$ to be defined later and apply condition \eqref{innerness} to obtain
    \[
        \sum_{j=1}^{k_n}c_{n,j} \leq (1+\upvarepsilon) \frac{1}{pn} c(\mathbf{y}_n)
    \]
    where $c_{n,j} = \frac{1}{pn} c(z_{n,j},{z_{n,j-1}\dots z_{n,1}}\cdot \omega)$ with $z_{m,i} \in F(\upvarepsilon)$. Define a sequence of piecewise $d_\infty$-geodesic curves $\zeta_n$ between each $\frac{1}{pn} \bull z_{n,j} \dots z_{n,1}$ and $\frac{1}{pn} \bull z_{n,j-1} \dots z_{n,1}$ for $j \in \{1, \dots, k_n\}$ such that $z_{n,0}=e$ and  \[
        \zeta_n(\tau_j)= \frac{1}{pn} \bull z_{n,j} \dots z_{n,1} \quad \text{for }\tau_j = \sum_{i=1}^j c_{n,i}\Big/\sum_{i=1}^{k_n}c_{n,i}.
    \]
    
    Set $\mathtt{m}_\upvarepsilon:= \min_{z \in F(\upvarepsilon)}\|z^{\ab}\|>0$. It follows from \cref{lm:phi.bi-Lipschitz} that $\E[ c_{n,j} ] \geq a'\mathtt{m}_\upvarepsilon \frac{1}{pn}$ and, due the the $L^1$ convergence in \cref{prop:subadditive.ergodic.thm}, there exists $n_1>n_0$ such that, for all $n >M_0 \vee n_1$,
    \[
        a' \mathtt{m}_\upvarepsilon\frac{1}{pn}\E[k_n] \leq (1 + \upvarepsilon)~ \ell_\infty(\upgamma) + k\upvarepsilon.
    \]
    
    Fix $\mathtt{C}_{\upgamma,\upvarepsilon} > \frac{2p}{ a'\mathtt{m}_\upvarepsilon}\big((1 + \upvarepsilon) \ell_\infty(\upgamma) + k\upvarepsilon\big)$ so that, for all $n \in \N$, $\E[k_n] \leq \mathtt{C}_{\upgamma,\upvarepsilon} n/2$. By Chernoff bound, $\p(k_n \geq \mathtt{C}_{\upgamma,\upvarepsilon}n) \leq \exp(-2n)$. It then follows from an application of Borel-Cantelli Lemma that, $\p$-a.s., there exists $M_0' \ge M_0$ such that, for every $n>M_0'$,
    \begin{equation} \label{eq:kn.upperbound}
        k_n \leq \mathtt{C}_{\upgamma,\upvarepsilon} n.
    \end{equation}
    Let $\mathtt{M}_\upvarepsilon := \max\limits_{z \in F(\upvarepsilon)} \|1\bull z\|_\infty$ Observe now that, for every $t,t'\in[0,1]$, $\p$-a.s., for  $n > M_0'$,
    \[
        d_\infty\big(\zeta_n(t),\zeta_n(t')\big) \leq \frac{k_n}{pn}\mathtt{M}_\upvarepsilon|t-t'| \leq \frac{1}{p}\mathtt{C}_{\upgamma,\upvarepsilon}\mathtt{M}_\upvarepsilon|t-t'|.
    \]
    
    Hence, one has by Arzelà–Ascoli Theorem that a subsequence of $\zeta_n$ converges uniformly to a Lipschitz curve $\zeta:[0,1]\to G_\infty$ such that $\zeta(0)=\mathlcal{e}$ and $\zeta(1)=\mathlcal{g}$. 
    
    We apply \cref{polygonal.path} once again fot the curve $\zeta$ with $\Hat{\varepsilon} = \upvarepsilon/2$ to obtain $p'>0$, $\{w_i\}_{i=1}^{k'} \subseteq \Gr$, $t_n=\lfloor np/p' \rfloor$, and $n_2>0$ such that, for all $n>n_2$
    \[
        \sum_{i=1}^{k'}d_\infty\left(\frac{1}{p' t_n} \bull w_i^{t_n} \dots w_1^{t_n}, \zeta\left(\frac{i}{k'}\right)\right) < \frac{\upvarepsilon}{2}.
    \]
    
    Recall that $F(\upvarepsilon)$ is a generating set of $\Gr$ and $\mathcal{C}\big(\Gr,F(\upvarepsilon)\big)$ shares the polynomial growth rate of $\mathcal{C}(\Gr,S)$. Then there exists $\mathtt{C}'>0$ such that, for a given $\varepsilon'\in(0,1)$,
    \begin{align*}
        \p \left( \sup_{z \in F(\upvarepsilon)}\Big\{c(z)\circ\tht_{z'}:z' \in B_{F(\upvarepsilon)}(e, \mathtt{C}_{\upgamma,\upvarepsilon}n)\Big\} \geq \varepsilon' n\right) \leq \mathtt{C}' |F(\upvarepsilon)| n^D g(\varepsilon' n) \\ \in \mathcal{O}_{\varepsilon'}(1/n^\upkappa)
    \end{align*}
    as $n \uparrow +\infty$.
    
    It thus follows by an application of Borel-Cantelli Lemma and by \eqref{eq:kn.upperbound} that for all $\varepsilon'\in(0,1)$, there exist, $\p$-a.s., $M_0'' \ge M_0'$ and a subdivision function $\mathsf{d}_n:\{0,1, \dots, k'\} \to \{0,1, \dots,k_n\}$ with $\mathsf{d}_n(0)=0 < \mathsf{d}_n(1) < \cdots < \mathsf{d}_n(k')=k_n$ such that, for all $n>M_0''$,
    \[
        \left\vert\frac{1}{k'} -\left.\sum_{i=\mathsf{d}_n(j-1)}^{\mathsf{d}_n(j)-1}c_{n,i+1}\right/\sum_{i=1}^{k_n}c_{n,i}\right\vert < \varepsilon'.
    \]
    Let
    \[
        \mathlcal{g}_{n, j} := \frac{1}{p't_n} \bull z_{n,\mathsf{d}_n(j)} z_{n,\mathsf{d}_n(j)-1} \dots z_{n,1} \quad \text{for } j\in\{1, \dots, k'\}.
    \]
    Then there exist $n_3 \geq n_2$ and, $\p$-a.s., $M_2'>M_0''$ such that, for all $n>M_2' \vee n_3$, $\|\mathlcal{g}_{n,j} - \zeta(j/k')\|_\infty< \frac{\upvarepsilon}{2}$. Hence, for $n>M_2'\vee n_3$, $\sum_{j=1}^{k'} d_\infty\left(\frac{1}{p'{t_n}}\bull w_j^{t_n} \dots w_1^{t_n}, \mathlcal{g}_{n,j}\right) < \upvarepsilon$. 
    
    It follows that there exists, $\p$-a.s. $M_2''>M_2' \vee n_3$ so that, for every $n>M_2''$, 
    \[
        \sum_{j=1}^{k'} \frac{1}{p't_n} d_S\left( w_j^{t_n} \dots w_1^{t_n}, z_{n,\mathsf{d}_n(j)}z_{n,\mathsf{d}_n(j)-1} \dots z_{n,1} \right) < \upvarepsilon
    \]
    We thus get from \cref{lm:local.bound} that there exists, $\p$-a.s., $M_1'> M_2''$, such that, for each $n> M_1'$,
    \[
        \sum_{j=1}^{k'} \left| c(w_j^{t_n}, w_{j-1}^{t_n} \dots w_1^{t_n} \cdot \omega) - c(z_{n,\mathsf{d}_n(j)}, z_{n,\mathsf{d}_n(j)-1} \dots z_{n,1}\cdot \omega) \right| < 4 \upbeta p't_n\upvarepsilon.
    \]
    Set $M_2 \geq M_1' \vee \frac{p'}{p \upvarepsilon^2}$. Therefore, one has, $\p$-a.s., for all $n>M_2$,
    \begin{align}
        \frac{1}{pn} c(\mathbf{y}_n,\omega) &> \frac{1}{(1+\upvarepsilon)pn} \sum_{j=1}^{k'} ~\sum_{i=d_n(j-1)}^{d_n(j)}c(z_{n,i}, z_{n,i-1}\dots z_{n,1}\cdot\omega) \nonumber\\
        &\geq \frac{1}{(1+\upvarepsilon)pn} \sum_{j=1}^{k'} c(z_{n,\mathsf{d}_n(j)}, z_{n,\mathsf{d}_n(j)-1} \dots z_{n,1}\cdot \omega) \nonumber\\
        & \geq \frac{1}{1+\upvarepsilon} \frac{p't_n}{pn}\left(\frac{1}{p't_n} \sum_{j=1}^{k'} c(w_j^{t_n}, w_{j-1}^{t_n} \dots w_1^{t_n} \cdot \omega)-4\upbeta\upvarepsilon\right) \nonumber\\
        &> (1-\upvarepsilon)\Big(\ell_\phi(\zeta) - (4\upbeta+1)\upvarepsilon \Big). \nonumber
     \end{align}
    
    Fix $\upvarepsilon = \frac{1}{2\big(\ell_\phi(\upgamma) + 4\upbeta+1 \big)}\epsilon$. Hence, since $\ell_\phi(\zeta)\geq\ell_\phi(\upgamma)- \epsilon/2$, one has, $\p$-a.s., for all $n >M_2$,
    \begin{equation} \label{eq:lower.bound.path.cocycle}
         \frac{1}{pn} c(\mathbf{y}_n,\omega) > \ell_\phi(\upgamma) - \epsilon.
    \end{equation}
    We complete the proof by combining \eqref{eq:upper.bound.path.cocycle} and \eqref{eq:lower.bound.path.cocycle}.
\end{proof}

\begin{lemma} \label{lm:lim.phi.g}
    Let  $\{x_n\}_{n\in\N}$ be a sequence in $\Gr$ and let $\{t_n\}_{n \in\N}$ be an increasing sequence in $\R$ such that $\lim_{n\uparrow +\infty}\frac{1}{t_n}\bull x_n = \mathlcal{g} \in G_\infty$. 
    
    Consider a subadditive cocycle $c:\Gr \times \Om \to \R_{\geq 0}$ satisfying conditions \eqref{all} and \eqref{aml}. If condition \eqref{innerness} is satisfied or if $\Gr$ is abelian, then, for all $\upepsilon \in (0,1)$, there exists, $\p$-a.s., a random $M = M(\mathlcal{g},\epsilon)>0$
    such that, for $t_n > M$,
	\[
		\left| \frac{1}{t_n}c(x_n) - \Phi(\mathlcal{g})\right| <\upepsilon%
	\]
\end{lemma}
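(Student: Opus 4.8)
The plan is to transfer the convergence already established along the polygonal curves of \cref{prop:asymptotic.approx} to the arbitrary sequence $\{x_n\}$ by a triangle-inequality argument, the comparison being supplied by \cref{lm:local.bound}.

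First, fix $\upepsilon\in(0,1)$ and an auxiliary $\epsilon'\in(0,1)$ to be calibrated at the end in terms of $\upbeta$ and $\upepsilon$. Apply \cref{prop:asymptotic.approx} with the point $\mathlcal{g}$ and parameter $\epsilon'$, taking the geodesic $\upgamma$ there so that $\ell_\phi(\upgamma)=d_\phi(\mathlcal{e},\mathlcal{g})=\Phi(\mathlcal{g})$ (such a $d_\phi$-geodesic exists since $d_\phi$ is the length metric of $\ell_\phi$, see \eqref{eq:def.d.phi}). This produces $\{y_j\}_{j=1}^k\subseteq\Gr$, $p>0$ and, $\p$-a.s., a threshold $M_2$ such that, writing $\mathbf{y}_n:=y_k^n\cdots y_1^n$,
\[
   \left|\tfrac{1}{pn}c(\mathbf{y}_n)-\Phi(\mathlcal{g})\right|<\epsilon'\qquad\text{for all }n>M_2 ,
\]
while \cref{polygonal.path} (its term $j=k$) gives $d_\infty\big(\tfrac{1}{pn}\bull\mathbf{y}_n,\mathlcal{g}\big)<\epsilon'$ for $n$ large. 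It is here, inside \cref{prop:asymptotic.approx}, that \eqref{innerness} is used — or dispensed with, when $\Gr$ is abelian.

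Next, match the scales: given $t_n$ set $n_\star:=\lfloor t_n/p\rfloor$, so that $pn_\star/t_n\to 1$. From the identity $\tfrac{1}{t}\bull x=\delta_{pn_\star/t}\big(\tfrac{1}{pn_\star}\bull x\big)$ (which follows from $L\circ\sigma_s=\delta_s\circ L$ and $\exp_\infty\circ\delta_s=\delta_s\circ\exp_\infty$) together with the continuity of the homotheties $\{\delta_t\}_{t>0}$, one gets $\limsup_{n}\big|\tfrac{1}{t_n}c(\mathbf{y}_{n_\star})-\Phi(\mathlcal{g})\big|=O(\epsilon')$ and $\limsup_{n}d_\infty\big(\tfrac{1}{t_n}\bull\mathbf{y}_{n_\star},\mathlcal{g}\big)=O(\epsilon')$; combined with $\tfrac{1}{t_n}\bull x_n\to\mathlcal{g}$, the metric form of Pansu's theorem (\cref{sec:asymptotic.cone}) gives $\limsup_{n}\tfrac{1}{t_n}d_S(x_n,\mathbf{y}_{n_\star})=O(\epsilon')$ (pass to subsequences if necessary, since $\tfrac{1}{t_n}\bull\mathbf{y}_{n_\star}$ only stays near $\mathlcal{g}$ rather than converging). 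Now invoke \cref{lm:local.bound} with $u_n=v_n=e$, with $(x_n)$ and $(\mathbf{y}_{n_\star})$ in the roles of the two sequences, and with the divergent scale $t_n$ in place of $n$ (the proof of that lemma uses only that the scale tends to $+\infty$ and that $\sum(\text{scale})^{-\upkappa}\log(\text{scale})<+\infty$): this yields, $\p$-a.s., $\big|c(x_n)-c(\mathbf{y}_{n_\star})\big|=O(\upbeta\,t_n\,\epsilon')$ for all $n$ large.

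Combining the two estimates,
\[
   \left|\tfrac{1}{t_n}c(x_n)-\Phi(\mathlcal{g})\right|\le\left|\tfrac{1}{t_n}\big(c(x_n)-c(\mathbf{y}_{n_\star})\big)\right|+\left|\tfrac{1}{t_n}c(\mathbf{y}_{n_\star})-\Phi(\mathlcal{g})\right|=O\big((\upbeta+1)\epsilon'\big)
\]
for all $n$ large, so it suffices to fix $\epsilon'$ small in terms of $\upbeta$ and $\upepsilon$ and let $M=M(\mathlcal{g},\upepsilon)$ exceed all the ($\p$-a.s.\ finite) thresholds produced along the way. I expect the scale-matching paragraph to be the main obstacle: carrying the intrinsic scale $pn$ of \cref{prop:asymptotic.approx} over to the externally prescribed scale $t_n$ without losing almost-sure control is what forces the reindexing $n_\star=\lfloor t_n/p\rfloor$ and a re-inspection of the Borel–Cantelli step inside \cref{lm:local.bound}; the rest is bookkeeping with the homotheties and subadditivity.
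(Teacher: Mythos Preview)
Your proposal is correct and follows essentially the same route as the paper: reindex via $n_\star=t_n':=\lfloor t_n/p\rfloor$, use the polygonal output $\mathbf{y}_{n_\star}=y_k^{n_\star}\cdots y_1^{n_\star}$ of \cref{polygonal.path}/\cref{prop:asymptotic.approx} to control $\frac{1}{pt_n'}c(\mathbf{y}_{n_\star})$ near $\Phi(\mathlcal{g})$, and then compare $c(x_n)$ with $c(\mathbf{y}_{n_\star})$ by subadditivity and Borel--Cantelli. Two cosmetic differences worth flagging. First, the paper does the comparison $|c(x_n)-c(\mathbf{y}_{n_\star})|$ by hand (direct subadditivity plus \eqref{all} and Borel--Cantelli, giving \eqref{eq.ylim}) rather than by citing \cref{lm:local.bound} with $u_n=v_n=e$; your route is equivalent, and your remark about re-checking the Borel--Cantelli step along the scale $t_n$ applies equally to the paper's argument. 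Second, for the abelian case the paper does \emph{not} funnel through \cref{prop:asymptotic.approx} (which, as stated, assumes \eqref{innerness}); instead it notes that commutativity collapses $\mathbf{y}_{n_\star}$ to a single power $(y_k\cdots y_1)^{n_\star}$ (indeed one may take $k=1$) and invokes \cref{prop:subadditive.ergodic.thm} directly. Your phrase ``or dispensed with, when $\Gr$ is abelian'' hides exactly this step and should be spelled out accordingly.
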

\begin{proof}
	Set $\varepsilon>0$ to be defined later. Consider $y_k, \dots, y_1 \in \Gr$ and $p$ given by \cref{polygonal.path} for $\Hat{\varepsilon}=\varepsilon/2$ and a $d_\infty$-geodesic curve $\upgamma:[0,1] \to G_\infty$ from $\mathlcal{e}$ to $\mathlcal{g}$. Let  $t_n':=\lfloor t_n/p\rfloor$. Since $\frac{1}{t_n}\bull x_i$ converges to $\mathlcal{g}$. It follows from the Borel-Cantelli Lemma applied to \eqref{all} that there exists, $\p$-a.s., $M'>0$ so that, for every $t_n>M'$,
	\begin{equation}
		\left(\frac{1}{pt_n'}-\frac{1}{t_n}\right)c(x_n) < \frac{p-1}{pt_n'}\upbeta \frac{\|x_n\|_S}{t_n} < \frac{\upepsilon}{4}. \label{eq.xfloor}
	\end{equation}
	
    Let us write $\mathbf{y}_i':=y_k^{t_n'}\dots y_1^{t_n'}$. Since $\lim_{n\uparrow+\infty}d_{\infty}\left(\frac{1}{p t_n'}\bull\mathbf{y}_n' , \mathlcal{g}\right) =0$, one can easily see that there exists $n_1'>0$ such that, for all $t_n'>n_1'$,  \[\frac{1}{pt_n'}d_\infty(x_n,\mathbf{y}_n')<\varepsilon.\] 
    Then there exists $n_2'\geq n_1'$ such that, for all $t_n' >n_2'$, one has
    $\|x_n(\mathbf{y}_n')^{-1}\|_S< pt_n' \varepsilon$.
    
    Let now $t= \upbeta p t_n'\varepsilon$ in \eqref{all}. Since $c(x)$ is identically distributed  to $c(x)\circ\tht_y$, we have by Borel-Cantelli Lemma that there exists, $\p$-a.s., $M'' \geq M' \vee n_2'$ such that, for $t_n > M''$,
	\begin{align}
		\frac{1}{pt_n'}\big|c(x_n)- c(\mathbf{y}_n')\big| &\leq \frac{1}{pt_n'}\max\big\{ c(\mathbf{y}_n'x_n^{-1})\circ\tht_{x_n},c(x_n(\mathbf{y}_n' )^{-1})\circ\tht_{\mathbf{y}_n'} \big\} \nonumber \\
		&< 2\upbeta \varepsilon. \label{eq.ylim}
	\end{align}
	
	Set $\varepsilon\leq \frac{\upepsilon}{8\upbeta}$ and combine \eqref{eq.xfloor} and \eqref{eq.ylim}. We thus obtain that, $\p$-a.s., for all $t_n >M''$,
	\begin{equation} \label{eq:asymp.pt1}
		\left| \frac{1}{t_n}c(x_n) - \frac{1}{p t_n'}c\left(\mathbf{y}_n'\right) \right|<\frac{\upepsilon}{2}.	
	\end{equation}
	
	Consider $\Gr$ abelian, then $\mathbf{y}_n'= (y_k\dots y_1)^{t_n'}$. In fact, it is straightfoward that $k=1$ by the standard approach for commutative groups. Then by \cref{prop:subadditive.ergodic.thm}, there is, $\p$-a.s., $M^\ast\geq M''$  such that, for all $t_n>M^\ast$,
	\begin{equation} \label{eq:asymp.pt2}
	    \frac{1}{p}\left|\frac{1}{t_n'}c\left( \mathbf{y}_n'\right) - \phi(y_1^{\ab})\right|<\frac{\upepsilon}{2}.
	\end{equation}
    
    Furthermore, we have $\phi(y_1^{\ab})/p = \ell_\phi(\upgamma) = \Phi(\mathlcal{g})$. Combining the two previous inequalities with \eqref{eq:asymp.pt1}, we can establish the result for the commutative case with  $M=M^\ast$. Now, let's consider the non-abelian case, assuming that \eqref{innerness} holds true. Notably, by \cref{prop:asymptotic.approx} with $\epsilon/2$ and $M= M'' \vee M_2$, for all $t_n>M$,
    \[\left\vert \frac{1}{p t_n'}c\left(\mathbf{y}_n'\right) -\ell_\phi(\upgamma) \right\vert < \frac{\upepsilon}{2}.\]
    This result, when combined with \eqref{eq:asymp.pt1}, completes the proof.
\end{proof}

We now proceed to demonstrate the proof of the first main theorem.

\begin{proof}[Proof of \cref{shape.thm}]
    We begin by proving the $\p$-a.s. asymptotic equivalence given, which is given by 
    \begin{equation} \label{eq:asymp.equiv}
        \lim_{\|x\| \uparrow +\infty} \frac{\vert c(x) - \Phi(1\bull x) \vert}{\|x\|_S} = 0 \quad \p\text{-a.s.}
    \end{equation}

    Suppose, by contradiction, that \eqref{eq:asymp.equiv} is not true. Consider $\{v_n\}_{n\in \N} \subseteq \Gr$ to be such that $\|v_n\|_S \uparrow +\infty$ as $n\uparrow+\infty$. Let $\mathlcal{S}_r$ stand for $\overline{B_\infty(\mathlcal{e},r)}$, the closure of the $d_\infty$-ball or radius $r>0$ in $G_\infty$. Due to the compactness of $\mathlcal{S}_1$ with respect to $d_\infty$, there exists a subsequence $\{y_n\}_{n\in\N} \subseteq \{v_n\}_{n\in\N}$ such that, for $t_n:=\|y_n\|_S$
    \[\lim_{n\uparrow+\infty} \frac{1}{t_n}\bull y_n = \mathlcal{h} \in \mathlcal{S}_1.\]

    By construction, $\Delta := \bigcup_{n\in\N}\left(\frac{1}{n}\bull\Gr\right)$ is a countable dense subset of $G_\infty$. Fix, for each $\mathlcal{g} \in \Delta$, $\sigma(\mathlcal{g})= \{x_n\}_{n \in \N}$ such that $\frac{1}{n}\bull x_n$ converges to $\mathlcal{g}$ under $d_\infty$ (see \cref{lm:conv.seq.Gr}). Let $\Om_{\mathlcal{g}} \in \F$ be the event with $\p(\Om_{\mathlcal{g}})=1$ given by \cref{lm:lim.phi.g} for $\sigma(\mathlcal{g})$. Hence, $\Om_\Delta := \bigcap_{\mathlcal{g} \in \Delta} \Om_{\mathlcal{g}}$ is such that $\p(\Om_\Delta)=1$.

    The compactness of $\mathlcal{S}_r$ implies the existence of a finite $\Delta_{r,\varepsilon}\subseteq \mathlcal{S}_r \cap \Delta$ such that $\bigcup_{\mathlcal{g}\in\Delta_{r,\varepsilon}}B_\infty(\mathlcal{g}, \varepsilon)$ covers $\mathlcal{S}_r$. Thus there exists $\mathlcal{g} \in \Delta_{1,\varepsilon}$ so that $\mathlcal{h} \in B_\infty(\mathlcal{g}, \varepsilon)$. Consider $\sigma(\mathlcal{g}) = \{x_n\}_{n\in\N}$ as defined above and let $\varepsilon>0$ to be determined later. Then, there exists $m(\varepsilon)>0$ so that, for all $t_n>m(\varepsilon)$,
    \begin{align*}
        d_\infty\left(\frac{1}{t_n}\bull x_{t_n}, \frac{1}{t_n}\bull y_n\right) &\leq d_\infty\left(\frac{1}{t_n}\bull x_{t_n}, \mathlcal{g}\right) + d_\infty\left(\frac{1}{t_n}\bull y_n, \mathlcal{h}\right) + d_\infty(\mathlcal{g}, \mathlcal{h})\\
        &\leq 3\varepsilon.
    \end{align*}
    and $d_S(x_{t_n},y_n) < 7\varepsilon=: \eta_\varepsilon$.

    Let $M_1(\mathlcal{g}, \eta_\varepsilon)>0$ be given by \cref{lm:local.bound} on $\Uptheta_\mathlcal{g} \in \F$ with $\p(\Uptheta_{\mathlcal{g}})=1$ satisfying, for all $t_n> M_1(\mathlcal{g}, \eta_\varepsilon)$  and $u_n \in B_S(x_{t_n}, t_n\eta_\varepsilon)$,
    \[|c(x_{t_n})-c(u_n)|< 14\|y_n\|_S\upbeta\varepsilon.\]
    
    Fix, for $M(\mathlcal{g}, \varepsilon)$ given by \cref{lm:lim.phi.g},  
    \[\widehat{M}(\varepsilon) := \max_{\mathlcal{g}\in \Delta_{\mathlcal{S}_1,\varepsilon}}\lbrace M(\mathlcal{g}, \varepsilon), M_1(\mathlcal{g},\eta_\varepsilon)\rbrace,\]
    which is finite on $\Uptheta_\Delta:= \bigcap_{\mathlcal{g} \in \Delta} (\Om_\Delta \cap \ \Uptheta_{\mathlcal{g}})$ with $\p(\Uptheta_\Delta)=1$. 
    
    Set $\hat{m}(\varepsilon)>m(\varepsilon)$ to be such that $\left|\Phi(\frac{1}{t_n}\bull y_n) - \Phi(\mathlcal{h})\right|<\varepsilon$ for all $n>\hat{m}(\varepsilon)$. Hence, for all $t_n > \widehat{M}(\varepsilon) \vee \hat{m}(\varepsilon)$ on $\Uptheta_\Delta$,
    \begin{align*}
        \frac{|c(y_n) -\Phi(1\bull y_n)|}{\|y_n\|_S} &\leq  \frac{1}{t_n}|c(y_n)-c(x_{t_n})| + \left| \frac{1}{t_n}c(x_n) - \Phi(\mathlcal{g}) \right|  \\ & \hspace{55pt}+ |\Phi(\mathlcal{g}) - \Phi(\mathlcal{h})|+ \left| \Phi(\mathlcal{h}) -\Phi\left(\frac{1}{t_n}\bull y_n\right)\right| \\
        &\leq (14\upbeta + 3) \varepsilon,
    \end{align*}
	which contradicts the above assumption proving that \eqref{eq:asymp.equiv} holds true.
 
    It remains to show how $\frac{1}{n}d_\omega$ converges to $d_\phi$ in the asymptotic cone. Recall that $d_\omega(x,y)= \big(c(yx^{-1})\circ\tht_{x}\big)(\omega)$. Consider now any given $\mathlcal{h}, \mathlcal{h}' \in G_\infty$ and $\{u_n\}_{n \in \N}$ a sequence with $\{t_n'\}_{n\in\N} \subseteq \N$ such that $t_n' \uparrow +\infty$ and $\frac{1}{t_n'}\bull u_n \to \mathlcal{h'h}^{-1}$. Then $\|u_n\|_S/t_n'$ converges to $d_\infty(\mathlcal{h}, \mathlcal{h}')$ and $\frac{1}{\|u_n\|_S}\bull u_n$ converges as above. In particular, one can fix any $r'>d_\infty(\mathlcal{h}, \mathlcal{h}')$ to find $\mathtt{k}_{r'}>0$ such that  $\|u_n\|_S/t_n' < r'$ for all $t_n'>\mathtt{k}_{r'}$.
    
    Let us define $\mathtt{K}_{r'} = (14\upbeta +3)r'$ and $m_{r'}(\varepsilon) = \hat{m}(\varepsilon) \vee \mathtt{k}_{r'}$. The asymptotic equivalence \eqref{eq:asymp.equiv} implies the existence of a random $\widehat{M}(\varepsilon)>0$ for $ \varepsilon \in (0, \frac{1}{14\upbeta+3})$ such that, for all $t_n' > \widehat{M}(\varepsilon) \vee m_{r'}(\varepsilon)$ on $\Uptheta_\Delta$,
    \[\left|\frac{1}{t_n'}c(u_n)  - \Phi\big(\mathlcal{h'h}^{-1}\big) \right|< \mathtt{K}_{r'} \varepsilon.\]

    Due to the fact that $\tht$ is a p.m.p. group action, one can repeat all arguments above also in \cref{prop:asymptotic.approx,lm:lim.phi.g} to obtain $\widehat{M}\big(\varepsilon, \sigma(\mathlcal{g})\big)$ and $\Uptheta_\Delta\big(\sigma(\mathlcal{g})\big)$ for each $\sigma(\mathlcal{g}) = \{x_n\}_{n \in \N}$ with $\mathlcal{g}\in G_\infty$ and $\p\big(\Uptheta_\Delta(\mathlcal{g})\big)=1$ so that, for all converging $\frac{1}{t_n'}\bull u_n$ as above and every $t_n' > \widehat{M}\big(\varepsilon, \sigma(\mathlcal{g})\big) \vee m_{r'}(\varepsilon)$ on $\Uptheta_\Delta\big(\sigma(\mathlcal{g})\big)$,
    \begin{equation} \label{eq:c.phi.1}
        \left|\frac{1}{t_n'}c(u_n)\circ\tht_{x_{t_n'}}  - \Phi(\mathlcal{h'h}^{-1}) \right|< \mathtt{K}_{r'} \varepsilon.
    \end{equation}

    Let now $\{v_n\}_{n\in\N}$ be a sequence that $\frac{1}{n} \bull v_n \to \mathlcal{h}$ and choose $r\geq d_\infty(\mathlcal{e}, \mathlcal{h})$. Fix $\mathlcal{g} \in \Delta_{r,\varepsilon}$ so that $\mathlcal{g} \in B_\infty(\mathlcal{h}, \varepsilon)$. By \cref{lm:local.bound}, one can find a random $\thickbar{M}_{r,r'}\big(\varepsilon,\sigma(\mathlcal{g})\big)> 0$ and $\Upxi_{\sigma(\mathlcal{g})}$ with $\p\big(\Upxi_{\sigma(\mathlcal{g})}\big) =1$ such that, for all $n> \thickbar{M}_{r,r'}\big(\varepsilon,\sigma(\mathlcal{g})\big)$ on $\Upxi_{\sigma(\mathlcal{g})}$,
    \begin{equation} \label{eq:c.phi.2}
        |c(w_n)\circ\tht_{x_n} - c(w_n)\circ\tht_{v_n}| < 2 \upbeta n \varepsilon,
    \end{equation}
    where $\{w_n\}_{n\in\N}$ is any convergent sequence $\frac{1}{n} \bull w_n \to \mathlcal{w}\in B_\infty(\mathlcal{e}, r')$. Let us fix 
    \[ 
        \Upxi_\Delta := \bigcap_{\mathlcal{g} \in \Delta}\left( \Upxi_{\sigma(\mathlcal{g})} \cap \Uptheta_\Delta\big(\sigma(\mathlcal{g}) \big)\right),
    \]
    and set 
    \[
        M_{r,r'}(\varepsilon):= \max_{\mathlcal{g} \in \Delta_{r,\varepsilon}}\left\{ \thickbar{M}_{r,r'}\big(\varepsilon,\sigma(\mathlcal{g})\big), \widehat{M}\big(\varepsilon, \sigma(\mathlcal{g})\big) \right\}.
    \]
    Then $M_{r,r'}(\varepsilon)$ is finite on $\Upxi_\Delta$ and  $\p\big(\Upxi_\Delta\big) = 1$. It follows from \eqref{eq:c.phi.1} and \eqref{eq:c.phi.2} that, for all $t_n'> M_{r,r'}(\varepsilon) \vee m_{r'}(\varepsilon)$ on $\Upxi_\Delta$,
    \[
        \left|\frac{1}{t_n'}c(u_n)\circ\tht_{v_{t_n'}}  - \Phi(\mathlcal{h'h}^{-1}) \right|< (\mathtt{K}_{r'} + 2\upbeta) \varepsilon.
    \]
    This establishes the $\p$-a.s. convergence of $\frac{1}{t_n'}d_\omega(v_{t_n'},u_nv_{t_n'})$ to $\Phi\big(\mathlcal{h'h}^{-1}\big) =d_\phi(\mathlcal{h},\mathlcal{h}')$ for $\omega\in \Upxi_\Delta$ as $n\uparrow+\infty$. Observe that the bi-Lipschitz equivalence is a straightforward consequence of \cref{lm:phi.bi-Lipschitz}, and this completes the proof.
\end{proof}

\subsection{Proof of the second theorem} \label{sec:virt.nilpotent.proofs}

With the first main theorem now established, we have determined the asymptotic shape for finitely generated torsion-free nilpotent groups. The objective of this subsection is to extend this result to a finitely generated virtually nilpotent group $\Gr$.

Recall that the nilpotent subgroup $N \unlhd \Gr$ has a finite index $\kappa=[\Gr:N]$, and for each coset $N_{(j)} =z_{(j)}N \in \Gr/N$, we designate a representative $z_{(j)} \in N_{(j)}$. Also, define $\uppi_N( x ) = z_{(j)}^{-1}x$ for all $x\in N_{(j)}$ and $j \in \{1, \dots, \kappa\}$.

We commence by presenting results concerning the properties of p.m.p. ergodic group actions of $\Gr$ with respect to $N$ and $\Gr'$. We adopt the notation $\cup \mathcal{A} := \bigcup_{A \in\mathcal{A}}A$.

\begin{lemma} \label{lm:erg.finite.index}
    Let $\Gr$ be a discrete group $\Gr$ and $N \unlhd \Gr$ a finite normal subgroup with finite index $[\Gr:N]=\kappa$. Consider that $\tht: \Gr \curvearrowright (\Om, \F, \p)$ is a p.m.p. ergodic group action. Then there exists a finite $\mathfrak{B}_N \subseteq \F$ such that, for all $B \in \mathfrak{B}_N$, $\p(B) \geq 1/\kappa$ and $\tht\big\vert_N,$ the restriction of $\tht$ on $N$, induces a p.m.p. ergodic group action on $\big(B, \F_{\cap B}, \p(~\cdot \mid B)\big)$. Furthermore, $|\mathfrak{B}_N|\leq \kappa$ and $\p(\cup\mathfrak{B}_N)=1$. 
\end{lemma}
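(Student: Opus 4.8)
The plan is to study the sub-$\sigma$-algebra $\mathcal{I}_N\subseteq\F$ consisting of the sets that are invariant under $\tht|_N$ modulo $\p$-null sets, and to show it is purely atomic with at most $\kappa$ atoms; these atoms will be the members of $\mathfrak{B}_N$. First one observes that $\Gr$ acts on the measure algebra of $\mathcal{I}_N$ and that this action factors through the finite quotient $\Gr/N$ of order $\kappa$: for $A\in\mathcal{I}_N$ and $\gamma\in\Gr$ the set $\tht_\gamma A$ is again $N$-invariant because $\gamma^{-1}N\gamma=N$, while $\tht_n$ with $n\in N$ acts trivially on $\mathcal{I}_N$ by definition. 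Since $\tht$ is ergodic, the only $\Gr$-fixed points of the full measure algebra are $\varnothing$ and $\Om$, so the induced $\Gr/N$-action on $\mathcal{I}_N$ is ergodic.

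The crux is the mass bound: \emph{every} $A\in\mathcal{I}_N$ with $\p(A)>0$ satisfies $\p(A)\ge1/\kappa$. Fixing coset representatives $g_1,\dots,g_\kappa$ of $N$ in $\Gr$, the union $\widetilde A:=\bigcup_{i=1}^{\kappa}\tht_{g_i}A$ is well defined modulo null sets — replacing $g_i$ by $g_i n$ with $n\in N$ changes $\tht_{g_i}A$ into $\tht_{g_i}\tht_nA=\tht_{g_i}A$ up to a null set — and it is $\Gr$-invariant, since left multiplication permutes the cosets. Ergodicity forces $\p(\widetilde A)=1$, whence $1\le\sum_{i=1}^\kappa\p(\tht_{g_i}A)=\kappa\,\p(A)$. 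Therefore the measure algebra of $\mathcal{I}_N$ contains no element of mass in $(0,1/\kappa)$, and an elementary argument (no infinite antichains, no non-atomic part) shows it is purely atomic with finitely many atoms $B_1,\dots,B_m$, $m\le\kappa$, which partition $\Om$ modulo null sets, each with $\p(B_i)\ge1/\kappa$.

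To finish, $\Gr/N$ permutes the atoms $\{B_1,\dots,B_m\}$ because a measure-algebra automorphism carries atoms to atoms of the same mass; the permutation action is transitive, for otherwise the union over a single orbit would be a $\Gr$-invariant set of measure strictly between $0$ and $1$, contradicting ergodicity. Transitivity forces $\p(B_1)=\cdots=\p(B_m)=1/m\ge1/\kappa$. Each $B_i\in\mathcal{I}_N$ is $N$-invariant modulo null sets, so $\tht|_N$ induces a p.m.p.\ action on $\big(B_i,\F_{\cap B_i},\p(\cdot\mid B_i)\big)$, and this action is ergodic: any $N$-invariant $C\subseteq B_i$ lies in $\mathcal{I}_N$, hence is a union of atoms contained in the atom $B_i$, so $C=\varnothing$ or $C=B_i$ modulo null sets. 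Taking $\mathfrak{B}_N:=\{B_1,\dots,B_m\}$ gives all four assertions: $\p(B)\ge1/\kappa$ for every $B\in\mathfrak{B}_N$, ergodicity of $\tht|_N$ on each piece, $|\mathfrak{B}_N|=m\le\kappa$, and $\p(\cup\mathfrak{B}_N)=1$.

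The main obstacle is the second step — converting the hypothesis that $\tht$ is ergodic into the quantitative bound $\p(A)\ge1/\kappa$ for positive-measure $N$-invariant sets; once that is in place, finiteness and atomicity of $\mathcal{I}_N$ are routine. A recurring subtlety is that invariance in $\mathcal{I}_N$ holds only \emph{modulo} $\p$-null sets, so the partition of $\Om$, the induced $N$-actions on the pieces, and the identification of atoms should all be read up to null sets; this is harmless because the lemma only claims that $\tht|_N$ is \emph{induced} on each $B\in\mathfrak{B}_N$.
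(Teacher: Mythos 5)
Your proof is correct and takes essentially the same route as the paper: both exploit the fact that for any positive-measure $N$-invariant $A$, the union $\bigcup_j \tht_{g_j}A$ over coset representatives is $\Gr$-invariant and hence of full measure, giving $\p(A)\ge 1/\kappa$, and then extract the pieces of the partition from this mass bound. The only cosmetic difference is that the paper picks a minimizer $A_0\in\mathfrak{A}_N$ of the measure and sets $\mathfrak{B}_N=\{z_{(j)}\cdot A_0\}_j$, whereas you derive purely atomic structure for the invariant $\sigma$-algebra and take the atoms directly; your version spells out the atomicity and transitivity of the $\Gr/N$-action on atoms more explicitly than the paper does, but the mathematical content is the same.
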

\begin{proof}
    Set $\mathfrak{A}_N \subseteq \F$ to be the family of all non-empty $N$-invariant events under $\tht$. Then, for all $A \in \mathfrak{A}_N$, \[\p\left(\bigcup_{j =1}^{\kappa}z_{(j)}\cdot A\right) =1\] which implies $\p(A) \geq 1/\kappa$. Observe that $\mathfrak{A}_N$ is closed under countable unions and non-empty countable intersections. Let us fix $A_0 \in \mathfrak{A}_N$  such that $ \p(A_0)= \inf_{A \in \mathfrak{A}}\p(A)$. Define $\mathfrak{B}_N = \{z_{(j)}\cdot A_0\}_{j=1}^{\kappa}$. 
    
    Since $N$ is a normal subgroup of $\Gr$, $N$ acts ergodically on $\big(B, \F_{\cap B}, \p(~\cdot \mid B)\big)$ for all $B \in \mathfrak{B}_N$ and it inherits the measure preserving property.
\end{proof}

We use \cref{lm:erg.finite.index} to write $(B,\F_B,\p_B)$ with $\F_B:=\F_{\cap B} = \{E \cap B : E \in \F\}$ and $\p_B(E):=\p(E \mid B)$ for each $B \in \mathfrak{B}_N$. Let us denote by $[\omega] = \tor N \cdot \omega$, the orbit of $\omega\in B$ under the action on $\tor N$. Set
\[\big([B], \F_B', \p_B'\big) := (B, \F_B, \p_B)/\tor N\]
where $\F_B' = \big\{[E]:E \in \F_B\big\}$ and $\p_B'\big([E]\big)$ is the induced probability measure $(\tor N)_\ast\p_B(E) = \p_B\left(\cup[E]\right)$. Let us fix $\upupsilon_x = \upupsilon_{\llbracket x \rrbracket} \in \llbracket x \rrbracket$ for each $\llbracket 
x \rrbracket \in \Gr'$. Define $\theta: \Gr' \curvearrowright \big([B], \F_B', \p_B'\big)$ so that 
\[
    \theta_{\llbracket x \rrbracket}\big([\omega]\big) = \big[\tht_{\upupsilon_x}(\omega)\big].
\]

\begin{lemma} \label{lm:erg.torsion.quotient}
    Let $\mathfrak{B}_N$ be the set obtained in \cref{lm:erg.finite.index}. Then, for each $B \in \mathfrak{B}_N$, $\theta: \Gr' \curvearrowright \big([B], \F_B', \p_B'\big)$ is a p.m.p. ergodic group action.
\end{lemma}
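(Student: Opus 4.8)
The plan is to check, in order, that $([B],\F_B',\p_B')$ is a genuine probability space, that $\theta$ is a well-defined $\Gr'$-action by $\p_B'$-preserving maps, and finally --- the only step that needs an idea rather than bookkeeping --- that this action is ergodic. Throughout I would use that $N$, having finite index in the finitely generated group $\Gr$, is itself finitely generated and nilpotent, so its torsion subgroup $\tor N$ is \emph{finite}; being characteristic in $N$, it is normal in $\Gr$. Write $q\colon B \to [B]$, $q(\omega)=[\omega]=\tor N\cdot\omega$, for the quotient map, and call a subset of $B$ \emph{saturated} if it is a union of $\tor N$-orbits. First I would record the standing facts: $B$ is $N$-invariant under $\tht$ (immediate from $B=z_{(j)}\cdot A_0$ and the normality of $N$, as already used in \cref{lm:erg.finite.index}), and by \cref{lm:erg.finite.index} the restriction $\tht\vert_N$ is a p.m.p.\ ergodic action on $(B,\F_B,\p_B)$; in particular each $\tht_n$ with $n\in N$ preserves $\p_B$. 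Since for $E\in\F_B$ the saturation $\tor N\cdot E=\bigcup_{z\in\tor N}\tht_z E$ is a \emph{finite} union of $\F_B$-sets, one checks that $\F_B'=\{[E]:E\in\F_B\}$ coincides with the push-forward $\sigma$-algebra $\{A\subseteq[B]:q^{-1}(A)\in\F_B\}$ (using $q^{-1}([E])=\tor N\cdot E$ and that $q^{-1}(A)$ is saturated), and that $\p_B'([E])=\p_B(\cup[E])=\p_B(\tor N\cdot E)$ is the push-forward probability measure, hence a bona fide probability measure.

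Next I would verify that $\theta$ descends correctly. The key algebraic identity is that for $\upupsilon\in N$ and $z\in\tor N$ one has $\tht_\upupsilon\circ\tht_z=\tht_{\upupsilon z\upupsilon^{-1}}\circ\tht_\upupsilon$ with $\upupsilon z\upupsilon^{-1}\in\tor N$ by normality; this shows $\tht_\upupsilon$ carries $\tor N$-orbits to $\tor N$-orbits, so it induces $\bar\tht_\upupsilon\colon[B]\to[B]$, $[\omega]\mapsto[\tht_\upupsilon\omega]$, and the same identity applied to $\upupsilon'=\upupsilon z$ shows $\bar\tht_\upupsilon$ depends only on the coset $\upupsilon\,\tor N=\llbracket\upupsilon\rrbracket$. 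Hence $\theta_{\llbracket x\rrbracket}:=\bar\tht_{\upupsilon_x}$ is well defined, independently even of the choice of representative $\upupsilon_x$, $\theta_{\llbracket e\rrbracket}=\mathrm{id}$, and $\theta_{\llbracket x\rrbracket\llbracket y\rrbracket}=\theta_{\llbracket xy\rrbracket}=\bar\tht_{\upupsilon_x\upupsilon_y}=\bar\tht_{\upupsilon_x}\circ\bar\tht_{\upupsilon_y}=\theta_{\llbracket x\rrbracket}\circ\theta_{\llbracket y\rrbracket}$, because $\upupsilon_x\upupsilon_y\in\llbracket xy\rrbracket$. For measurability and measure preservation I would compute $\theta_{\llbracket x\rrbracket}^{-1}([E])=q\big(\tht_{\upupsilon_x}^{-1}(\tor N\cdot E)\big)$, observe that $\tht_{\upupsilon_x}^{-1}(\tor N\cdot E)$ is again saturated (same conjugation identity), hence lies in $\F_B$ and its $q$-image lies in $\F_B'$, and then $\p_B'\big(\theta_{\llbracket x\rrbracket}^{-1}([E])\big)=\p_B\big(\tht_{\upupsilon_x}^{-1}(\tor N\cdot E)\big)=\p_B(\tor N\cdot E)=\p_B'([E])$ since $\tht_{\upupsilon_x}$ preserves $\p_B$.

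The main point is ergodicity, and this is where I expect the only real subtlety. Let $A\in\F_B'$ be $\theta$-invariant and put $E:=q^{-1}(A)=\cup A\in\F_B$, a saturated set with $\p_B(E)=\p_B'(A)$. Invariance of $A$ under each $\theta_{\llbracket x\rrbracket}$ gives $[\tht_{\upupsilon_x}^{-1}(\tor N\cdot E)]=[E]$ in $[B]$; since both $\tht_{\upupsilon_x}^{-1}(\tor N\cdot E)$ and $E=\tor N\cdot E$ are saturated, and two saturated sets with the same $q$-image coincide (up to a $\p_B$-null set, which is harmless for ergodicity), this upgrades to $\tht_{\upupsilon_x}^{-1}(E)=E$ for every $\llbracket x\rrbracket\in\Gr'$; and saturation of $E$ gives $\tht_z^{-1}(E)=E$ for $z\in\tor N$. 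Now every $n\in N$ factors as $n=\upupsilon_x z$ with $\llbracket x\rrbracket=\llbracket n\rrbracket$ and $z\in\tor N$, so $\tht_n^{-1}(E)=\tht_z^{-1}\tht_{\upupsilon_x}^{-1}(E)=E$; thus $E$ is $\tht\vert_N$-invariant. Ergodicity of $\tht\vert_N$ on $(B,\F_B,\p_B)$ from \cref{lm:erg.finite.index} then forces $\p_B(E)\in\{0,1\}$, hence $\p_B'(A)\in\{0,1\}$, which is the ergodicity of $\theta$. The part to be careful with is precisely this saturation bookkeeping --- that $\theta$-invariance downstairs, combined with the $\tor N$-invariance built into the quotient, is equivalent to full $N$-invariance upstairs --- since that is what lets \cref{lm:erg.finite.index} be applied verbatim.
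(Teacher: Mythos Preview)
Your proof is correct and follows essentially the same route as the paper's: lift a $\theta$-invariant set $A\in\F_B'$ to its saturated preimage $E=q^{-1}(A)$, use normality of $\tor N\unlhd N$ to show $E$ is invariant under every $\tht_n$ with $n\in N$, and conclude via the $N$-ergodicity supplied by \cref{lm:erg.finite.index}. The paper's version is terse---it dispatches measure preservation as ``immediately inherited'' and compresses the saturation argument into the single identity $\cup[v\cdot A]=v'\cdot(\cup[A])$ for $v'\in v\,\tor N$---whereas you carefully verify well-definedness of $\theta$, the $\sigma$-algebra and push-forward measure, and the factorization $n=\upupsilon_x z$; this extra care is welcome but does not constitute a different argument. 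One minor remark: two saturated sets with the same $q$-image coincide \emph{exactly} (since $q^{-1}(q(C))=C$ for saturated $C$), so the ``up to a $\p_B$-null set'' caveat is unnecessary.
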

\begin{proof}
    The measure preserving property is immediately inherited from $\tht$. Let $\tht_v(\omega) = v\cdot\omega$. Due to the normality of $\tor N \unlhd N$, for all $A \in \F_B$ and each $v' \in v.\tor N$, 
    \[\cup[v \cdot A] = v'\cdot\left(\cup[A]\right).\]
    
    Hence, if for all $v.\tor N \in \Gr'$, one has $[v \cdot A] = [A]$. Then, for all $x \in N$
    \[x\cdot \left(\cup[A]\right)=\cup[A].\]
    It follows from the ergodicity of $\tht:N \curvearrowright (B, \F_B, \p_B)$ that $\p_B'\big([A]\big)\in\{0,1\}$, which is the desired conclusion.
\end{proof}

\begin{remark} \label{rmk:c.prime}
    Recall that definition \eqref{eq:def.c.prime} determines
    \[c'\big(\llbracket x \rrbracket\big) := \max_{\substack{y \in \llbracket x \rrbracket\\ z \in \tor N}}c(y)\circ\tht_z.\]

    It is straightforward to see that $c'$ is compatible with the probability space $\big([B],\F_b', \p_B'\big)$ for each $B \in \mathfrak{B}_N$. Futhermore, it is a subadditive cocycle associated with $\theta$. Additionally, $c'$ is well defined on $(B,\F_B,\p_B)$. Let $\Om':= \cup \mathfrak{B}_N$ and $\p(\Om')=1$. Consequently, one can investigate $c'$ on $([B],\F_B',\p_B')$, and the results can be naturally extended $\p$-a.s. to $(\Om,\F,\p)$.
\end{remark}

In preparation for the asymptotic comparison between cocycles $c$ and $c'$, the following lemmas provide essential insights into their respective properties and relationships.

\begin{lemma} \label{lm.T.bounded.nilpotent.as}
    Let $\varepsilon, r>0$ and consider a subadditive cocycle $c$ that satisfies condition \eqref{all}. Then there exists, $\p$-a.s., $M_N=M_N(\varepsilon,r)>0$ such that, for all $n > M_N$ and every $x \in B_S(e,rn)$,
    \[
        \big|c(x) - c\big(\uppi_N( x )\big)\big| < \varepsilon n.
    \]
\end{lemma}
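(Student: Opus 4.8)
The plan is to exploit that $x$ and $\uppi_N(x)$ differ only by left‑multiplication by one of the finitely many coset representatives $z_{(1)},\dots,z_{(\kappa)}$, and then to control the cocycle along that single extra step uniformly in $x$ by a Borel--Cantelli argument, much in the spirit of \cref{lm:local.bound}.

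First I would record a two‑sided cocycle bound. Writing $x = z_{(j)}\uppi_N(x)$ for $x \in N_{(j)}$ and applying \eqref{eq:subadditivity} gives $c(x) \le c(\uppi_N(x)) + c(z_{(j)})\circ\tht_{\uppi_N(x)}$, while applying \eqref{eq:subadditivity} to $\uppi_N(x) = z_{(j)}^{-1}x$ gives $c(\uppi_N(x)) \le c(x) + c(z_{(j)}^{-1})\circ\tht_x$. Hence
\[
\big|c(x) - c(\uppi_N(x))\big| \le \max\big\{c(z_{(j)})\circ\tht_{\uppi_N(x)},\ c(z_{(j)}^{-1})\circ\tht_x\big\}.
\]
Setting $C_0 := \max_{1\le j\le\kappa}\|z_{(j)}^{\pm1}\|_S$, every $x \in B_S(e,rn)$ has both $x$ and $\uppi_N(x)=z_{(j)}^{-1}x$ in $B_S(e,\,rn+C_0)$, so it suffices to bound
\[
\Sigma_n := \sup_{z \in Z}\ \sup_{w \in B_S(e,\,rn+C_0)} c(z)\circ\tht_w, \qquad Z := \{z_{(j)}^{\pm1} : 1 \le j \le \kappa\},
\]
below $\varepsilon n$ for all large $n$.

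Next I would estimate $\p(\Sigma_n \ge \varepsilon n)$. Since $\tht$ is measure preserving, $c(z)\circ\tht_w$ is identically distributed to $c(z)$, so \eqref{all} yields $\p\big(c(z)\circ\tht_w \ge t\big) \le g(t)$ for all $t > \upbeta\|z\|_S$; with $\upbeta_0 := \upbeta\max_{z\in Z}\|z\|_S$ this holds for every $z \in Z$, every $w$, and all $t > \upbeta_0$. A union bound over the finite set $Z$ and over $B_S(e,\,rn+C_0)$, whose cardinality is $\mathcal{O}(n^D)$ by polynomial growth, gives, for $n > \upbeta_0/\varepsilon$,
\[
\p(\Sigma_n \ge \varepsilon n) \le |Z|\cdot|B_S(e,\,rn+C_0)|\cdot g(\varepsilon n) \in \mathcal{O}\big(n^{D}\cdot n^{-(2D+\upkappa)}\big) = \mathcal{O}\big(n^{-(D+\upkappa)}\big),
\]
which is summable because $D \ge 1$ and $\upkappa > 1$. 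Borel--Cantelli then produces, $\p$-a.s., a finite $M_N=M_N(\varepsilon,r)$ with $\Sigma_n < \varepsilon n$ for all $n > M_N$, and combining this with the two‑sided bound above finishes the proof.

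The argument is essentially routine; the only points needing care are the bookkeeping of the radius inflation by the constant $C_0$ (so that \eqref{all} is still applied on a ball of radius linear in $n$) and verifying $D+\upkappa>1$ so that Borel--Cantelli applies — both immediate from the standing hypotheses. I do not anticipate a genuine obstacle here: this lemma is a warm‑up comparison between $c$ and its restriction to $N$ that will later feed the asymptotic identification of $c$ with $c'$.
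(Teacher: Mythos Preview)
Your proposal is correct and follows essentially the same approach as the paper: the same two-sided subadditivity bound $|c(x)-c(\uppi_N(x))| \le \max\{c(z_{(j)})\circ\tht_{\uppi_N(x)},\, c(z_{(j)}^{-1})\circ\tht_x\}$, the same union bound over $B_S(e,rn)$ and the finitely many $z_{(j)}^{\pm1}$ using \eqref{all} and polynomial growth to land in $\mathcal{O}(n^{-(D+\upkappa)})$, and the same Borel--Cantelli conclusion. Your bookkeeping of the radius inflation by $C_0$ is slightly more explicit than the paper's, but the argument is otherwise identical.
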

\begin{proof}
    It follows from subadditivity that, for $x \in N_{(j)}$,
    \[
        |c(x) - c(z_{(j)}^{-1}x)| \leq \max\left\{c(z_{(j)})\circ\tht_{z_{(j)}^{-1}x},\; c(z_{(j)}^{-1})\circ\tht_{x}\right\} \quad \p\text{-a.s.}
    \]
    for every $j \in \{1,\dots, \kappa\}$. Let $\mathtt{m}_\kappa=\max\left\{\|z_{(j)}\|_S:1 \leq j \leq \kappa\right\}$. Hence, one has by \eqref{all} and a $\mathtt{C}>0$ that
    \begin{align*}
        \p\left(\max_{x \in B_S(e,rn)}\big\{ |c(x)-c(\uppi_N(x))| \big\} \geq \varepsilon n \right) &\leq |B_S(e,rn)|\sum_{j=1}^\kappa\p\left(c(z_{(j)}^{\pm 1}) \geq \varepsilon n\right)\\
        & \leq \mathtt{C}r^Dn^D g(n \varepsilon) \in \mathcal{O}_{\varepsilon, r}\big(1/n^{D+\upkappa}\big)
    \end{align*}
    for $n > \upbeta \mathtt{m}_\kappa/\varepsilon$. The result is derived through the application of the Borel-Cantelli Lemma.
\end{proof}

\begin{lemma} \label{lm.T.bounded.torsion.as}
    Let $\varepsilon, r>0$ and consider a subadditive cocycle $c$ that satisfies condition \eqref{all}. Then there exists, $\p$-a.s., $M_q=M_q(\varepsilon,r)>0$ such that, for all $n > M_q$ and every $x \in B_S(e,rn)$,
    \[
        \left|c\left(x_1\right)\circ\tht_{y_1} - c(x_2)\circ\tht_{y_2}\right| < \varepsilon n
    \]
    where $x_1,x_2 \in \llbracket x\rrbracket$ and $y_1,y_2 \in \tor N$.

\end{lemma}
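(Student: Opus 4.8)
The plan is to reduce the two-sided comparison to a one-sided comparison against the fixed representative $\uppi_N(x)$, rewrite everything in terms of the pseudoquasimetric $d_\omega$ so that the torsion shifts become conjugation terms, and then run the same union-bound and Borel--Cantelli argument as in the proof of \cref{lm.T.bounded.nilpotent.as}. Recall that, $N$ being finitely generated and nilpotent, $\tor N$ is a \emph{finite normal} subgroup of $N$; set $\mathtt{m}_{\tor}:=\max\{\|t\|_S:t\in\tor N\}$ and $\mathtt{m}_\kappa:=\max\{\|z_{(j)}\|_S:1\le j\le\kappa\}$. Fix $x\in B_S(e,rn)$ and put $v:=\uppi_N(x)$, so $\llbracket x\rrbracket=v\,\tor N$ and $\|v\|_S\le\mathtt{m}_\kappa+rn$. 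By the triangle inequality in $\R$ it suffices to produce a random $M_q=M_q(\varepsilon,r)$ such that, for $n>M_q$, every $x'\in\llbracket x\rrbracket$ and every $y\in\tor N$,
\[
    \big|\,c(x')\circ\tht_y-c(v)\,\big|<\tfrac{\varepsilon}{2}\,n.
\]

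To pass to geometry, write $x'=vt$ with $t\in\tor N$ and $s:=ty\in\tor N$; then by \eqref{eq:cocycle.metric} and the right-equivariance of $d_\omega$ (equivalently $d_{z\cdot\omega}(a,b)=d_\omega(az,bz)$) one has $\big(c(x')\circ\tht_y\big)(\omega)=d_{y\cdot\omega}(e,x')=d_\omega(y,\,x'y)=d_\omega(y,\,vs)$ and $c(v,\omega)=d_\omega(e,v)$. I would then compare $d_\omega(y,vs)$ with $d_\omega(e,v)$ by applying the triangle inequality for $d_\omega$ (which is immediate from subadditivity of $c$) along $y\to e\to v\to vs$ and along $e\to y\to vs\to v$, obtaining
\[
    \big|\,d_\omega(y,vs)-d_\omega(e,v)\,\big|\le d_\omega(y,e)+d_\omega(e,y)+d_\omega(v,vs)+d_\omega(vs,v).
\]
Each term on the right is of the form $c(t')\circ\tht_{z'}$ with $t'\in\tor N$ and $\|z'\|_S\le 2rn$ for $n$ large: indeed $d_\omega(y,e)=c(y^{-1})\circ\tht_y$, $d_\omega(e,y)=c(y)$, $d_\omega(v,vs)=c(vsv^{-1})\circ\tht_v$ and $d_\omega(vs,v)=c(vs^{-1}v^{-1})\circ\tht_{vs}$, and it is precisely the \emph{normality} of $\tor N$ in $N$ that keeps the conjugates $vs^{\pm1}v^{-1}$ inside the finite set $\tor N$ while $\|v\|_S,\|vs\|_S\le\mathtt{m}_\kappa+rn+\mathtt{m}_{\tor}$. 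Writing $Y_n:=\max\{c(t')\circ\tht_{z'}:t'\in\tor N,\ z'\in B_S(e,2rn)\}$, this gives $|c(x')\circ\tht_y-c(v)|\le 4\,Y_n$, uniformly over $x\in B_S(e,rn)$, $x'\in\llbracket x\rrbracket$ and $y\in\tor N$.

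It then remains to control $Y_n$, which proceeds exactly as in \cref{lm.T.bounded.nilpotent.as}. Since $\tht$ is measure preserving, $c(t')\circ\tht_{z'}$ is distributed as $c(t')$, so \eqref{all} gives $\p\big(c(t')\circ\tht_{z'}\ge\tfrac{\varepsilon}{8}n\big)\le g\big(\tfrac{\varepsilon}{8}n\big)$ as soon as $\tfrac{\varepsilon}{8}n>\upbeta\mathtt{m}_{\tor}$; a union bound over the $|\tor N|$ choices of $t'$ and the $|B_S(e,2rn)|\in\mathcal O(n^D)$ choices of $z'$ yields $\p\big(Y_n\ge\tfrac{\varepsilon}{8}n\big)\in\mathcal O_{\varepsilon,r}\big(n^D g(\tfrac{\varepsilon}{8}n)\big)\subseteq\mathcal O_{\varepsilon,r}\big(1/n^{D+\upkappa}\big)$, which is summable. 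Borel--Cantelli then produces, $\p$-a.s., a random $M_q$ (enlarged if necessary past $8\upbeta\mathtt{m}_{\tor}/\varepsilon$ and $(\mathtt{m}_\kappa+\mathtt{m}_{\tor})/r$) with $Y_n<\tfrac{\varepsilon}{8}n$ for all $n>M_q$; hence $|c(x')\circ\tht_y-c(v)|<\tfrac{\varepsilon}{2}n$, and the $\R$-triangle inequality then gives $|c(x_1)\circ\tht_{y_1}-c(x_2)\circ\tht_{y_2}|<\varepsilon n$ for arbitrary $x_1,x_2\in\llbracket x\rrbracket$ and $y_1,y_2\in\tor N$.

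The step I expect to be the main obstacle is the very first one. Subadditivity of $c$ alone does not let one compare $c(v)\circ\tht_z$ with $c(v)$ for $z\in\tor N$, because the shift $\tht_z$ genuinely transports the randomness; the way out is to reinterpret the cocycle as the pseudoquasimetric $d_\omega$, use its triangle inequality and right-equivariance to turn these shifts into boundary terms $c(vsv^{-1})\circ\tht_v$, and then exploit that $\tor N$ is a finite normal subgroup of $N$ so that the cocycle is only ever evaluated at a fixed finite family of elements, to which \eqref{all} applies. Once this is arranged, the rest is the familiar union-bound--Borel--Cantelli estimate, verbatim as in \cref{lm.T.bounded.nilpotent.as}.
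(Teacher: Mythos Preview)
Your proof is correct and follows essentially the same approach as the paper: both use the normality of the finite subgroup $\tor N\trianglelefteq N$ together with subadditivity to reduce the difference to cocycle values at finitely many torsion elements (shifted by points in a ball of radius $\mathcal{O}(n)$), then apply the union bound with \eqref{all} and Borel--Cantelli. The only cosmetic difference is that you route the subadditivity through the pseudoquasimetric $d_\omega$ and anchor at $c(\uppi_N(x))$, whereas the paper compares $c(x_1)\circ\tht_{y_1}$ and $c(x_2)\circ\tht_{y_2}$ directly via the decomposition $x_1=v_2x_2y_3$; your union bound is in fact slightly sharper ($\mathcal{O}(n^D)$ versus the paper's $\mathcal{O}(n^{2D})$), but both are summable.
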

\begin{proof}
    Since $\tor N$ is a normal subgroup of $N$, the exists $v_2\in \tor N$ such that $x_1=v_2x_2y_3$ with $y_3= y_2y_1^{-1}$. Thus
    \begin{align*}
        c(x_1)\circ\tht_{y_1} &\leq c(y_3)\circ \tht_{y_1} + c(v_2x_2)\circ\tht_{y_2}\\
        &\leq c(y_3)\circ \tht_{y_1} + c(x_2)\circ\tht_{y_2} + c(v_2)\circ\tht_{x_2y_2} ~~~\p\text{-a.s.}
    \end{align*}
    We apply the same reasoning for $c(x_2)\circ\tht_{y_2}$ obtaining that
    \[
        \left|c(x_1)\circ\tht_{y_1} - c(x_2)\circ\tht_{y_2}\right| \leq \max_{y,z \in \tor N}\{c(y)\circ\tht_z\} + \max_{y,z \in \tor N}\{c(y)\circ \tht_{x_1z}\} \quad \p\text{-a.s.}
    \]
    By \eqref{all} and the finitness of $\tor N$, there exists a constant $C' >0$ such that
     \begin{eqnarray*}
        \mathbb{P}\left( \sup\limits_{\substack{x\in B_S(e,rn)\\x_1,x_2 \in \llbracket x \rrbracket \\ y_1,y_2 \in \tor N}}\hspace{-3pt} \big|c(x_1)\circ\tht_{y_1} - c(x_2)\circ\tht_{y_2}\big| \geq \varepsilon n \right) &\leq& 2|\tor N|^4 {|B_S(e,rn)|^2} g(\varepsilon n)\\
        &\leq& C'{(rn)^{2D}}g(\varepsilon n) \in \mathcal{O}_{\varepsilon,r}(1/n^\upkappa)
     \end{eqnarray*}
     for $n > \upbeta \max\{ \|z\|_S: z \in \tor N\}/\varepsilon$. The desired conclusion follows from an application of Borel-Cantelli Lemma.
\end{proof}

Let us define, for all $\llbracket x \rrbracket \in \Gr'$,
\[
    \big\vert \llbracket x \rrbracket \big\vert_S^{\inf} := \min_{1 \leq i,j\leq \kappa} ~\min_{y \in (z_{(j)} .\llbracket x \rrbracket.z_{(i)}^{-1})} \|y\|_S,
\]
and
\[
    \big\vert \llbracket x \rrbracket \big\vert_S^{\sup} := \max_{1 \leq i, j\leq \kappa}  ~\max_{y \in (z_{(j)} .\llbracket x \rrbracket.z_{(i)}^{-1})} \|y\|_S.
\]
Set \[\mathtt{m}_{\kappa,q} := \max_{1 \leq i, j \leq \kappa} ~\max_{z \in (z_{(j)} .\llbracket e \rrbracket. z_{(i)}^{-1})} \|z\|_S.\]

Thus, one has, for all $y \in z_{(j)}.\llbracket x \rrbracket$ with $j \in \{1, \dots, \kappa\}$,
\begin{equation} \label{eq:equiv.nom.inf.sup}
    \big\vert \llbracket x \rrbracket \big\vert_S^{\inf} \leq \|y\|_S \leq \big\vert \llbracket x \rrbracket \big\vert_S^{\sup} \leq \big\vert \llbracket x \rrbracket \big\vert_S^{\inf} + 2\cdot\mathtt{m}_{\kappa,q}.
\end{equation}

By the same arguments employed in Section \ref{sec:norms.and.mean}, the discrete norm
\begin{equation} \label{eq:norm.inf.abelian}
    \big\vert \llbracket x \rrbracket\big\vert_S^{\ab} := \inf_{\llbracket y \rrbracket \in \big(\llbracket x \rrbracket. [\Gr',\Gr']\big)} \big\vert \llbracket y \rrbracket \big\vert_S^{\inf}
\end{equation}
exhibits the same properties as $\|-\|_S^{\ab}$ when $\llbracket S \rrbracket$ is a generating set of $\Gr'$.

Consider $\sigma(\mathlcal{g}) = \big\{\llbracket x \rrbracket_n\big\}_{n\in\N} \subseteq \Gr'$ to be the sequences fixed for each $\mathlcal{g}\in G_\infty$ in the proof of  \cref{shape.thm}. Set $x_n:= \upupsilon_{\llbracket x \rrbracket_n}$ with $\upupsilon$ defined by the group action $\theta$. Then 
\[\llbracket x_n \rrbracket= \llbracket x \rrbracket_n\]
when $\sigma(\mathlcal{g})$ is given. Let us write $\upupsilon_\sigma(\mathlcal{g}) = \{x_n\}_{n \in \N}$ for each $\sigma(\mathlcal{g}) = \big\{\llbracket x \rrbracket_n\big\}_{n\in\N} \subseteq \Gr'$. Also, one can easily verify that
    \[\lim_{n\uparrow+\infty}\frac{\|x_n\|_S}{n} = \lim_{n\uparrow+\infty}\frac{\big\vert\llbracket x_n \rrbracket \big\vert_S^{\inf}}{n} = \lim_{n\uparrow+\infty}\frac{\big\vert\llbracket x_n \rrbracket \big\vert_S^{\sup}}{n} = d_\infty(\mathlcal{e},\mathlcal{g}) .\]

The proposition below shows us that $c$ and $c'$ share the same linear asymptotic behaviour.

\begin{proposition} \label{prop:asympt.equiv.c.cPrime}
    Let $\Gr$ be a virtually nilpotent group, and let $c:\Gr \times \Om \to \R_{\geq0}$ be a subadditive cocycle associated with $\tht$. 

    If condition \eqref{all} is satisfied, then  $c$ and $c'$ are asymptotically equivalent, \textit{i.e.}, there exists, $\p$-a.s., $M'(\varepsilon)>0$ such that, for all $x \in \Gr$ with $\|x\|_S> M'(\varepsilon)$,
    \begin{equation} \label{eq:asymp.equiv.c.c.prime}
        \big| c(x)-c'(\llbracket x\rrbracket)\big| < \varepsilon\|x\|_S.
    \end{equation}

    In particular, \eqref{all} implies the $\p$-a.s. existence of $M'\big(\varepsilon, r, \upupsilon_\sigma(\mathlcal{g})\big)>0$ so that, for all $n> M'\big(\varepsilon,r, \upupsilon_\sigma(\mathlcal{g})\big)$ and every $y \in B_S(e, rn)$,
    \begin{equation} \label{eq:aproxx.seq.c.c.prime}
        \big| c(y)-c'(\llbracket y\rrbracket)\big| \circ \tht_{x_n} < n \varepsilon.
    \end{equation}
\end{proposition}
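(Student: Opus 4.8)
The plan is to prove the pointwise estimate \eqref{eq:asymp.equiv.c.c.prime} by combining the two comparison lemmas \cref{lm.T.bounded.nilpotent.as} and \cref{lm.T.bounded.torsion.as} with the definition \eqref{eq:def.c.prime} of $c'$, and then to deduce the translated, sequence-indexed version \eqref{eq:aproxx.seq.c.c.prime} by the usual measure-preserving trick: since $\tht$ is p.m.p. and the random thresholds from the lemmas depend only on the distribution of $c$ (through \eqref{all}), the events on which the estimates hold may be transported by $\tht_{x_n}$ without changing their probability. First I would fix $\varepsilon>0$ and, for $x\in\Gr$, recall that $c'(\llbracket x\rrbracket)=\max_{y\in\llbracket x\rrbracket,\,z\in\tor N}c(y)\circ\tht_z$. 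Since $\llbracket x\rrbracket=\uppi_N(x)\cdot\tor N$, one element realizing a candidate value is $y=\uppi_N(x)$, $z=e$, giving $c(\uppi_N(x))\le c'(\llbracket x\rrbracket)$; hence it suffices to bound $|c(x)-c(\uppi_N(x))|$ and $|c'(\llbracket x\rrbracket)-c(\uppi_N(x))|$ separately, each by $\tfrac{\varepsilon}{2}\|x\|_S$ eventually.

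The first bound is exactly \cref{lm.T.bounded.nilpotent.as}: applied with the radius $r$ replaced by a running radius (or, for the first statement, noting that any $x$ with $\|x\|_S=n$ lies in $B_S(e,rn)$ for $r=1$), it produces a $\p$-a.s. finite $M_N(\varepsilon/2,1)$ such that $|c(x)-c(\uppi_N(x))|<\tfrac{\varepsilon}{2}\|x\|_S$ once $\|x\|_S$ is large. For the second bound, write $c'(\llbracket x\rrbracket)=c(y_\star)\circ\tht_{z_\star}$ for the maximizing pair $y_\star\in\llbracket x\rrbracket$, $z_\star\in\tor N$; since $\uppi_N(x)\in\llbracket x\rrbracket$ and $e\in\tor N$, \cref{lm.T.bounded.torsion.as} with $x_1=y_\star$, $y_1=z_\star$, $x_2=\uppi_N(x)$, $y_2=e$ gives $|c(y_\star)\circ\tht_{z_\star}-c(\uppi_N(x))|<\tfrac{\varepsilon}{2}\|x\|_S$ for $\|x\|_S$ large (the lemma is stated for all such pairs simultaneously with a uniform threshold $M_q(\varepsilon/2,1)$, so the choice of maximizing pair causes no trouble). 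Combining the two via the triangle inequality on the common intermediate value $c(\uppi_N(x))$ yields \eqref{eq:asymp.equiv.c.c.prime} with $M'(\varepsilon):=\max\{M_N(\varepsilon/2,1),M_q(\varepsilon/2,1)\}$, which is $\p$-a.s. finite.

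For \eqref{eq:aproxx.seq.c.c.prime} I would rerun the same argument but track the translation $\tht_{x_n}$. The probability estimates inside \cref{lm.T.bounded.nilpotent.as} and \cref{lm.T.bounded.torsion.as} are obtained from \eqref{all}, which is translation-invariant in law because each $c(y)$ is identically distributed to $c(y)\circ\tht_z$ under the p.m.p. action; thus for a fixed sequence $\{x_n\}$ one gets, by the identical Borel--Cantelli computation applied to the translated events $\{|c(y)-c(\uppi_N(y))|\circ\tht_{x_n}\ge \tfrac\varepsilon2 n\ \text{for some }y\in B_S(e,rn)\}$ (whose summable probabilities are unchanged), a $\p$-a.s.\ finite random index $M'\big(\varepsilon,r,\upupsilon_\sigma(\mathlcal g)\big)$ past which $|c(y)-c'(\llbracket y\rrbracket)|\circ\tht_{x_n}<n\varepsilon$ for every $y\in B_S(e,rn)$. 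The main obstacle I anticipate is purely bookkeeping rather than conceptual: making sure the ``for all $y\in B_S(e,rn)$'' quantifier is inside the Borel--Cantelli argument (so the threshold does not depend on $y$) and checking that the maximization defining $c'$ is over the finite set $\llbracket x\rrbracket\cap B_S\times\tor N$ so that the union bound has only polynomially many terms in $n$ — both of which are already handled in the statements of the two lemmas, so the proof is essentially an assembly of them plus the translation-invariance of \eqref{all}.
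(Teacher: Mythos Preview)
Your proposal is correct and follows essentially the same approach as the paper: both split $|c(x)-c'(\llbracket x\rrbracket)|$ via the intermediate $c(\uppi_N(x))$, invoke \cref{lm.T.bounded.nilpotent.as} and \cref{lm.T.bounded.torsion.as} with parameters $(\varepsilon/2,1)$ to bound each half, set $M'(\varepsilon)=M_N(\varepsilon/2,1)\vee M_q(\varepsilon/2,1)$, and obtain \eqref{eq:aproxx.seq.c.c.prime} by rerunning the Borel--Cantelli arguments of those lemmas on the $\tht_{x_n}$-translated events, whose probabilities are unchanged because $\tht$ is p.m.p. Your write-up is in fact more explicit than the paper's (you spell out why $c(\uppi_N(x))\le c'(\llbracket x\rrbracket)$ and why the maximizing pair causes no issue), but the underlying argument is identical.
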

\begin{proof}
    From \cref{lm.T.bounded.nilpotent.as,lm.T.bounded.torsion.as}, we can deduce that, for every $\varepsilon>0$,  one can fix $M'(\varepsilon)= M_N(\frac{\varepsilon}{2},1) \vee M_q(\frac{\varepsilon}{2},1)$ so that, $\p$-a.s., for all $n> M'(\varepsilon)$ and every $x \in B_S(e,n+1)\setminus B_S(e,n)$,
    \[
        \frac{|c(x)-c'(x)|}{\|x\|_S} < \frac{\left|c(x)-c\big(\uppi(x)\big)\right|}{n} + \frac{\left|c\big(\uppi(x)\big)-c'(x)\right|}{n} < \varepsilon.
    \]
    The inequality above implies the asymptotic equivalence of $c$ and $c'$ on $\Gr$.

    Since $\tht$ is p.m.p. group action, one can obtain from \cref{lm.T.bounded.nilpotent.as,lm.T.bounded.torsion.as} the random variables $M_N>0$ and $M_q>0$ depending on $\upupsilon_\sigma(\mathlcal{g})\big)>0$ determining
    \[
        M'\big(\varepsilon,r,\upupsilon_\sigma(\mathlcal{g})\big) = M_N\big({\varepsilon}/{2},r,\upupsilon_\sigma(\mathlcal{g})\big) \vee M_q\big({\varepsilon}/{2},r,\upupsilon_\sigma(\mathlcal{g})\big)
    \]
     so that \eqref{eq:aproxx.seq.c.c.prime} holds true.
\end{proof}

The following result extends the subadditive ergodic theorem to $c'$ with respect to $|-|_S^{\ab}$. 
\begin{lemma} \label{lm:subaddtive.ergodic}
    Consider $\Gr$ to be a virtually nilpotent group generated by a finite symmetric set $S \subseteq \Gr$ with $\llbracket S \rrbracket$ a generating set of $\Gr'$.
    
    If the subadditive cocycle $c$ satisfies \eqref{all} and \eqref{aml2} with respect to the word norm $\|-\|_S$, then $c'$ satisfies \eqref{all} and \eqref{aml} with respect to $\vert-\vert_S^{\inf}$. In particular, \cref{lm:phi.bi-Lipschitz} is still valid with $x^{\ab}=\llbracket x \rrbracket^{\ab}$ and
    \[\phi(x^{\ab}) = \inf_{n \in \N}\frac{\E[c'(\llbracket x\rrbracket^n)]}{n}.\]
\end{lemma}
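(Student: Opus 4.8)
The plan is to deduce conditions \eqref{all} and \eqref{aml} for $c'$ on the group $\Gr'=N/\tor N$ — which is finitely generated torsion-free nilpotent, has torsion-free abelianization, and is generated by $\llbracket S\rrbracket$ — equipped with the p.m.p.\ ergodic action $\theta$ of \cref{lm:erg.torsion.quotient}, and then to obtain the ``in particular'' clause by applying \cref{prop:subadditive.ergodic.thm,lm:phi.bi-Lipschitz} to the subadditive cocycle $c'$ verbatim. For \eqref{all}, note that $\tor N$ is finite, so $c'(\llbracket x\rrbracket)$ is a maximum of at most $|\tor N|^2$ terms $c(y)\circ\tht_z$ with $y\in\llbracket x\rrbracket$ and $z\in\tor N$; each such $y$ obeys $\|y\|_S\le\big\vert\llbracket x\rrbracket\big\vert_S^{\sup}\le\big\vert\llbracket x\rrbracket\big\vert_S^{\inf}+2\mathtt{m}_{\kappa,q}$ by \eqref{eq:equiv.nom.inf.sup}, and $c(y)\circ\tht_z$ has the same law as $c(y)$ since $\tht$ is p.m.p. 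A union bound over these terms (picking up an extra factor $\kappa$ when one works on a piece $(B,\F_B,\p_B)$, using $\p(B)\ge1/\kappa$) then yields $\p\big(c'(\llbracket x\rrbracket)\ge t\big)\le\kappa|\tor N|^2 g(t)$ for $t>\upbeta\big(\big\vert\llbracket x\rrbracket\big\vert_S^{\inf}+2\mathtt{m}_{\kappa,q}\big)$; the remaining bounded range $t\le4\upbeta\mathtt{m}_{\kappa,q}$ is absorbed via $\p(\cdot)\le1\le(4\upbeta\mathtt{m}_{\kappa,q}/t)^{2D+\upkappa}$, so \eqref{all} holds for $c'$ with a new constant $\upbeta'=2\upbeta$ and $g'(t)=\kappa|\tor N|^2 g(t)+(4\upbeta\mathtt{m}_{\kappa,q})^{2D+\upkappa}t^{-(2D+\upkappa)}\in\mathcal{O}\big(t^{-(2D+\upkappa)}\big)$. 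In particular $c'(\llbracket x\rrbracket)\in L^1$.

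For \eqref{aml}, the case $\llbracket x\rrbracket\in[\Gr',\Gr']$ is vacuous since then $\big\vert\llbracket x\rrbracket^n\big\vert_S^{\ab}=0$, so fix $\llbracket x\rrbracket\notin[\Gr',\Gr']$. Given $\llbracket y\rrbracket\in\llbracket x\rrbracket[\Gr',\Gr']$, choose a representative $\tilde y\in N$; then $\uppi_N(\tilde y^{\,n})=\tilde y^{\,n}$ and $\llbracket y\rrbracket^n=\tilde y^{\,n}\tor N=\llbracket\tilde y^{\,n}\rrbracket$ for every $n$, so taking the term $y=\tilde y^{\,n}$, $z=e$ in the maximum \eqref{eq:def.c.prime} gives $c'(\llbracket y\rrbracket^n)\ge c(\tilde y^{\,n})$ pointwise, hence $\E[c'(\llbracket y\rrbracket^n)]\ge\E[c(\tilde y^{\,n})]$. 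Now apply \eqref{aml2} to $\tilde y\in\Gr$: it furnishes a sequence $n_j\uparrow+\infty$ depending only on $\llbracket\tilde y\rrbracket[\Gr',\Gr']=\llbracket x\rrbracket[\Gr',\Gr']$ — hence a single sequence serves every $\llbracket y\rrbracket$ in the coset — with $\E[c(\tilde y^{\,n_j})]\ge a\|\tilde y^{\,n_j}\|_S\ge a\big\vert\llbracket\tilde y^{\,n_j}\rrbracket\big\vert_S^{\inf}\ge a\big\vert\llbracket\tilde y^{\,n_j}\rrbracket\big\vert_S^{\ab}$, where the last two inequalities use \eqref{eq:equiv.nom.inf.sup} (with $z_{(\cdot)}=e$, admissible because $\tilde y^{\,n_j}\in N$) and then \eqref{eq:norm.inf.abelian}. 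Thus \eqref{aml} holds for $c'$ with the same constant $a$.

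With \eqref{all} and \eqref{aml} established for $c'$, the entire content of \cref{sec:norms.and.mean} transfers, reading $\big\vert\cdot\big\vert_S^{\inf}$, $\big\vert\cdot\big\vert_S^{\ab}$, and the analogue of \eqref{eq:biLipschitz.abelian.norm} noted after \eqref{eq:norm.inf.abelian} in place of $\|\cdot\|_S$, $\|\cdot\|_S^{\ab}$, and \eqref{eq:biLipschitz.abelian.norm}. In particular \cref{prop:subadditive.ergodic.thm}, applicable because $\theta$ is p.m.p.\ ergodic on $([B],\F_B',\p_B')$ and $c'\in L^1$, gives $\phi(\llbracket x\rrbracket^{\ab})=\lim_n\tfrac1n c'(\llbracket x\rrbracket^n)$ a.s.\ and in $L^1$; the $L^1$ convergence yields $\phi(\llbracket x\rrbracket^{\ab})=\lim_n\tfrac1n\E[c'(\llbracket x\rrbracket^n)]$, which equals $\inf_n\tfrac1n\E[c'(\llbracket x\rrbracket^n)]$ by Fekete's lemma applied to the subadditive sequence $n\mapsto\E[c'(\llbracket x\rrbracket^n)]$, and \cref{lm:phi.bi-Lipschitz} then gives the two-sided bound $a'\|\llbracket x\rrbracket^{\ab}\|\le\phi(\llbracket x\rrbracket^{\ab})\le b'\|\llbracket x\rrbracket^{\ab}\|$. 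I expect the only genuine difficulty to be bookkeeping: one must keep track of which probability space — the ambient $(\Om,\F,\p)$, the pieces $(B,\F_B,\p_B)$ of \cref{lm:erg.finite.index}, or their torsion quotients $([B],\F_B',\p_B')$ — each estimate is taken on, and verify (using that every $B\in\mathfrak{B}_N$ is $N$-invariant and $\tht$ is p.m.p.) that the expectation lower bound extracted from \eqref{aml2} survives the passage to $([B],\F_B',\p_B')$ and that the resulting $\phi$ is independent of the piece, so the conclusion extends $\p$-a.s.\ to $(\Om,\F,\p)$ as indicated in \cref{rmk:c.prime}.
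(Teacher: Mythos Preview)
Your proposal is correct and follows essentially the same approach as the paper: a union bound over the finitely many terms in the definition of $c'$ to get \eqref{all} with a new $g'$ and $\upbeta'$, the pointwise inequality $c'(\llbracket y\rrbracket^n)\ge c(\tilde y^{\,n})$ together with \eqref{aml2} and \eqref{eq:equiv.nom.inf.sup} to get \eqref{aml}, and then a direct transfer of \cref{prop:subadditive.ergodic.thm,lm:phi.bi-Lipschitz} using \eqref{eq:norm.inf.abelian}. Your write-up is in fact more careful than the paper's (which is terse to the point of omitting the bookkeeping you flag at the end, and whose constant $\kappa|\tor N|$ appears to undercount the $|\tor N|^2$ terms you correctly identify), but the underlying argument is the same.
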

\begin{proof}
    First, observe that \eqref{all} and \eqref{aml2} imply, for all $x \in \Gr$ and 
    \begin{equation*} 
        \p\Big( c'\big(\llbracket x \rrbracket\big) \geq  t \Big) \leq \kappa ~\vert\tor N\vert~ g(t), \quad \text{for all } t >\upbeta\big|\llbracket x \rrbracket\big|_S^{\sup},
    \end{equation*}
    and
    \begin{equation*} 
        \E\Big[ c'\big(\llbracket x \rrbracket\big)\Big] \geq a \big|\llbracket x \rrbracket\big|_S^{\inf}.
    \end{equation*}

    Therefore, it follows from \eqref{eq:equiv.nom.inf.sup} that $c'$ satisfy \eqref{all} and \eqref{aml} with respect to $\vert -\vert_S^{\inf}$ for a new $g'(t) \in \mathcal{O}\big(t^{2D+\upkappa}\big)$ and $\upbeta'>0$. The proof is complete by replacing $\|-\|_S$ with $\vert - \vert_S^{\inf}$ and applying \eqref{eq:norm.inf.abelian} in the proof of \cref{lm:phi.bi-Lipschitz}.

\end{proof}

Having established the aforementioned results, we now move forward to prove the second theorem.

\begin{proof}[Proof of Theorem \ref{thm:shape.polynomial}]
    Observe that it follows from \cref{lm:erg.finite.index,lm:erg.torsion.quotient,rmk:c.prime,lm:subaddtive.ergodic} that, for each $B \in \mathfrak{B}_N$, \cref{shape.thm} holds true for $c'$ on $(B,\F_B,\p_B)$. Therefore, it suffices to extend the results to $(\Om,\F,\p)$ and compare $c$ with $c'$. 
    
    The asymptotic equivalence is an immediate consequence of \eqref{eq:asymp.equiv} and \eqref{eq:asymp.equiv.c.c.prime}, we focus on the second part of the proof of \cref{shape.thm}. Recall de definition of $\Delta$ as a dense subset of $G_\infty$, the finite $\Delta_{r,\varepsilon}$. Similarly, we consider $\{u_n\}_{n\in\N} \subseteq \Gr$ and $\{t_n'\}_{n\in\N}\subseteq\N$ with $t_n\uparrow+\infty$ as $n\uparrow+\infty$ and $\frac{1}{t_n'}\bull u_n \to \mathlcal{h}'\mathlcal{h}^{-1}$. Note that we may regard $\llbracket u \rrbracket_n = \llbracket u_n \rrbracket$ to replace the orifinal sequence in the proof of Thm. \ref{shape.thm} and let $\mathtt{K}_{r'}$ and $m_{r'}(\varepsilon)$ be defined as before with $r'>d_\infty(\mathlcal{h},\mathlcal{h}')$. 
    
    Set $\widehat{M}\big(\varepsilon,\sigma(\mathlcal{g}),B\big)$ and $\Uptheta_\Delta\big( \sigma(\mathlcal{g}), B \big)$ to be defined by \eqref{eq:c.phi.1} for each $B \in \mathfrak{B}_N$ with $\p\left(\Uptheta_\Delta\big( \sigma(\mathlcal{g})\big), B \big) \mid B\right)=1$ so that, for all $t_n'>\widehat{M}\big(\varepsilon,\sigma(\mathlcal{g}),B\big) \vee m_{r'}(\varepsilon)$,
    \begin{equation} \label{eq:c.phi.1.prime}
        \left\vert \frac{1}{t_n'}c' \big(\llbracket u_n \rrbracket\big)\circ \tht_{x_{t_n}} - \Phi(\mathlcal{h}'\mathlcal{h}^{-1}) \right\vert < \mathtt{K}_{r'}\varepsilon.
    \end{equation}
    on $\Uptheta_\Delta\big( \sigma(\mathlcal{g})\big), B \big)$ with $\upupsilon_\sigma(\mathlcal{g})=\{x_n\}_{n\in\N}$. Fix
    \[
        \widehat{M}'\big(\varepsilon,\sigma(\mathlcal{g})\big) := \sum_{B\in\mathfrak{B}_N}\widehat{M}\big(\varepsilon,\sigma(\mathlcal{g}),B\big)\mathbbm{1}_B + \mathbbm{1}_{\Om\setminus(\cup\mathfrak{B}_N)}.
    \]

    Consider $\{y_n\}_{n\in\N}$ with $\|y_n\|_S/n < r'$ for every $n> m_{r'}(\varepsilon)$. Then \cref{prop:asympt.equiv.c.cPrime} ensures the existence of $M'\big(\varepsilon,r', \upupsilon_\sigma(\mathlcal{g})\big)>0$ and $\Uplambda_{\sigma(\mathlcal{g})} \in \F$ with $\p(\Uplambda_{\sigma(\mathlcal{g})})=1$ so that, for all $n>M'\big(\varepsilon,r', \upupsilon_\sigma(\mathlcal{g})\big)$ on $\Uplambda_{\sigma(\mathlcal{g})}$,
    \begin{equation} \label{eq:new.approx.c.c.prime}
        \frac{1}{n}\left\vert c(y_n) - c'\big( \llbracket y_n \rrbracket \big) \right\vert \circ \tht_{x_n} < \varepsilon.
    \end{equation}

    Let now $\{v_n\}_{n\in\N}\subseteq \Gr$ be a sequence such that $\frac{1}{n} \bull v_n \to \mathlcal{h}$ and choose $r\geq d_\infty(\mathlcal{e}, \mathlcal{h})$. Fix $\mathlcal{g} \in \Delta_{r,\varepsilon}$ so that $\mathlcal{g} \in B_\infty(\mathlcal{h}, \varepsilon)$. Observe that \eqref{eq:c.phi.2} is still valid for $c$. Hence, by \cref{lm:local.bound}, one can find  $\thickbar{M}_{r,r'}'\big(\varepsilon,\sigma(\mathlcal{g})\big)> 0$ and $\Upxi_{\sigma(\mathlcal{g})}'$ with $\p\big(\Upxi_{\sigma(\mathlcal{g})}'\big) =1$ such that, for all $n> \thickbar{M}_{r,r'}'\big(\varepsilon,\sigma(\mathlcal{g})\big)$ on $\Upxi_{\sigma(\mathlcal{g})}'$,
    \begin{equation} \label{eq:c.phi.2.prime}
        |c(w_n)\circ\tht_{x_n} - c(w_n)\circ\tht_{v_n}| < 2 \upbeta n \varepsilon,
    \end{equation}
    where $\{w_n\}_{n\in\N}\subseteq\Gr$ is any convergent sequence $\frac{1}{n} \bull w_n \to \mathlcal{w}\in B_\infty(\mathlcal{e}, r')$. Let us fix 
    \[
        \Uplambda_\Delta := \bigcap_{\mathlcal{g} \in \Delta}\left( \Uplambda_{\sigma(\mathlcal{g})} \cap \Upxi_{\sigma(\mathlcal{g})}  \cap \left(\bigcup_{B \in\mathfrak{B}_N} \Uptheta_\Delta\big(\sigma(\mathlcal{g}),B \big)\right)\right),
    \]
    and set
    \[
        M_{r,r'}'(\varepsilon):= \max_{\mathlcal{g} \in \Delta_{r,\varepsilon}}\left\{ M'\big(\varepsilon,r', \upupsilon_\sigma(\mathlcal{g})\big), ~\thickbar{M}_{r,r'}'\big(\varepsilon,\sigma(\mathlcal{g})\big), ~\widehat{M}'\big(\varepsilon, \sigma(\mathlcal{g})\big) \right\}.
    \]
    Then $M_{r,r'}'(\varepsilon)$ is finite on $\Uplambda_\Delta$ and  $\p\big(\Uplambda_\Delta\big) = 1$. It follows from \eqref{eq:c.phi.1.prime}, \eqref{eq:new.approx.c.c.prime}, and \eqref{eq:c.phi.2.prime} with $u_n=w_{t_n'}=y_{t_n'}$ that, for all $t_n'> M_{r,r'}'(\varepsilon) \vee m_{r'}(\varepsilon)$ on $\Uplambda_\Delta$,
    \[
        \left|\frac{1}{t_n'}c(u_n)\circ\tht_{v_{t_n'}}  - \Phi(\mathlcal{h'h}^{-1}) \right|< (\mathtt{K}_{r'} + 2\upbeta +1) \varepsilon.
    \]
    This establishes the $\p$-a.s. convergence of $\frac{1}{t_n'}d_\omega(v_{t_n'},u_nv_{t_n'})$ to $\Phi\big(\mathlcal{h'h}^{-1}\big) =d_\phi(\mathlcal{h},\mathlcal{h}')$ for $\omega\in \Uplambda_\Delta$ as $n\uparrow+\infty$.
\end{proof}

\subsection{An additional result for FPP models} \label{sec:additional.FPP}

In the preceding sections, we delved into the asymptotic behavior of $c$ and $c'$. The definition of $c'$ depends only  on the action of $\tht$ restricted to $N \unlhd \Gr$, ensuring that we can systematically investigate the group action of $\Gr'$ within a fixed $B \in \mathfrak{B}_N$. 

To broaden the scope of our findings and establish the validity of \eqref{innerness2} for FPP models on virtually nilpotent groups, we will introduce a new random variable induced by a graph homomorphism. Let us now define, for all $\llbracket x \rrbracket \in \Gr' \setminus\{\llbracket  e\rrbracket\}$,
\[c''\big(\llbracket x \rrbracket\big) := \max_{1 \leq i, j \leq \kappa}\max_{\substack{y \in z_{(j)}.\llbracket x \rrbracket\\ z \in z_{(i)}.\tor N}}c(y)\circ\tht_z\]
and consider $c''\big( \llbracket e \rrbracket \big) :=0$. Note that $c''$ restricted to $B \in \mathfrak{B}_N$ is not well-defined when there exists another set $B' \in \mathfrak{B}_N$ distinct from $B$. This inherent limitation prompts the necessity for specific conditions in the subsequent result.

The following lemma outlines the criteria under which $c''$ inherits the FPP property from $c$. Before presenting this result, we establish the notation:
\[\llbracket S \rrbracket^\pm := \big\{ \llbracket s \rrbracket^{\pm1} \colon s \in S \big\}.\]

\begin{lemma} \label{lm:N.ergodic}
    Let $(\Gr,.)$ be a virtually nilpotent group generated by a finite symmetric set $S \subseteq \Gr$ with $\llbracket S \rrbracket$ a generating set of $\Gr'$. Consider a subadditive cocycle $c:\Gr\times\Om\to\R_{\geq 0}$ determining a FPP model on $\mathcal{C}(\Gr, S)$ which satisfies \eqref{all}. Suppose that the restriction $\tht\big\vert_{N}: N \curvearrowright (\Om,\F,\p)$ is a p.m.p. ergodic group action.

    If, for all $s \in S$, $\llbracket s^{-1} \rrbracket = \llbracket s \rrbracket^{-1}$, then $c''$ determines a FPP model on $\mathcal{C}(\Gr, \llbracket S \rrbracket^\pm)$ and condition \eqref{innerness2} is satisfied when $\llbracket S \rrbracket^\pm \subseteq \Gr'\setminus[\Gr',\Gr']$.
\end{lemma}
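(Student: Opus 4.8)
The plan is to realise $c''$ as the first-passage function of a concrete FPP model on $\mathcal{C}(\Gr',\llbracket S\rrbracket^\pm)$ obtained by pushing the given edge weights forward along the coset map $x\mapsto\llbracket x\rrbracket$, and then to read off \eqref{innerness2} from the elementary fact — already used in \cref{sec:fpp} for \eqref{innerness} — that a first-passage function is inner along any of its geodesics once the generating set avoids the commutator subgroup.

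The first and main step is the structural identification. The hypothesis $\llbracket s^{-1}\rrbracket=\llbracket s\rrbracket^{-1}$ for every $s\in S$ makes $\llbracket S\rrbracket^\pm=\llbracket S\rrbracket$ a \emph{symmetric} generating set of $\Gr'$, so that $\mathcal{C}(\Gr',\llbracket S\rrbracket^\pm)$ is a genuine Cayley graph, and it lets one check that the coset projection $\varpi\colon x\mapsto\llbracket x\rrbracket$ carries each edge $\{x,sx\}$ of $\mathcal{C}(\Gr,S)$ either to a loop or to an edge of $\mathcal{C}(\Gr',\llbracket S\rrbracket^\pm)$: when $x\in N_{(j)}$ and $sx\in N_{(j')}$ one computes $\llbracket sx\rrbracket\llbracket x\rrbracket^{-1}=z_{(j')}^{-1}sz_{(j)}\tor N$, and the two nested maxima over all $\kappa$ coset representatives $z_{(i)},z_{(j)}$ in the definition of $c''$ are exactly what erases the dependence on those representatives — this is the graph homomorphism alluded to in the definition of $c''$, and also the reason $c''$ (rather than $c'$) is used and is not well defined after restricting to a single $B\in\mathfrak{B}_N$. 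I would then define the weight of the direction $t\in\llbracket S\rrbracket^\pm$ at a vertex $\llbracket g\rrbracket$ as the matching maximum of the $\tht$-translates of $\tau$ over the fibre of that edge and over the fibre of $\llbracket e\rrbracket$, use \cref{lm:erg.finite.index,lm:erg.torsion.quotient} together with the ergodicity of $\tht\big\vert_N$ to see that along each fixed direction these weights are identically distributed for the induced $\Gr'$-action $\theta$ — so that we really do have an FPP model in the sense of \cref{sec:fpp} — and finally verify $T''(\llbracket e\rrbracket,\cdot)=c''$: the inequality $T''(\llbracket e\rrbracket,\cdot)\le c''$ is the subadditive cocycle property of $c''$, and $c''\le T''(\llbracket e\rrbracket,\cdot)$ follows by projecting, through $\varpi$, a $\tau$-geodesic realising the maxima that define $c''(\llbracket x\rrbracket)$ to an admissible path in $\mathcal{C}(\Gr',\llbracket S\rrbracket^\pm)$ of no larger $\tau''$-weight.

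Granting $c''=T''(\llbracket e\rrbracket,\cdot)$ and $\llbracket S\rrbracket^\pm\subseteq\Gr'\setminus[\Gr',\Gr']$, the innerness step is routine. For each $\upvarepsilon>0$ take $F(\upvarepsilon)$ to be a fixed lift of $\llbracket S\rrbracket^\pm$ inside $N\setminus[N,N]$ — this is possible precisely because $\llbracket S\rrbracket^\pm$ misses $[\Gr',\Gr']=[N,N]\tor N/\tor N$, so that every lift of an element of $\llbracket S\rrbracket^\pm$ already lies in $N\setminus[N,N]$ — and then $F(\upvarepsilon)$ is a generating set of $\Gr'$ of the kind required in \eqref{innerness2}. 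Given $x\in\Gr$, fix a $\tau''$-geodesic $\bigl(\llbracket e\rrbracket=g_0,g_1,\dots,g_m=\llbracket x\rrbracket\bigr)$ in $\mathcal{C}(\Gr',\llbracket S\rrbracket^\pm)$, write $g_i=t_ig_{i-1}$ with $t_i\in\llbracket S\rrbracket^\pm$, so $\llbracket x\rrbracket=t_m\cdots t_1$; since a single edge is a competitor in the infimum defining $T''$, one has $c''(t_i)\circ\theta_{t_{i-1}\cdots t_1}=T''(g_{i-1},g_i)\le\tau''(g_{i-1},g_i)$, and summing over $i$ gives
\[
    \sum_{i=1}^m c''\bigl(t_i,\;t_{i-1}\cdots t_1\ast\omega\bigr)\;\le\;c''(\llbracket x\rrbracket,\omega)\;\le\;(1+\upvarepsilon)\,c''(\llbracket x\rrbracket,\omega),
\]
which is \eqref{innerness2}, $c''$ being the quotient cocycle to which \cref{shape.thm} is applied on each $(B,\F_B,\p_B)$ in the FPP setting.

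The delicate point, and the only place the extra hypotheses genuinely enter, is the structural identification of the second paragraph: that the coset projection descends to a graph homomorphism onto $\mathcal{C}(\Gr',\llbracket S\rrbracket^\pm)$ and that $c''$, with its $\kappa^2$-fold maximum, coincides with the first-passage function of a well-defined, $\theta$-covariant FPP model there. Once that is in place, the derivation of \eqref{innerness2} is word-for-word the argument given for \eqref{innerness} in \cref{sec:fpp}.
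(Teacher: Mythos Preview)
Your approach is essentially the paper's: define quotient edge weights $\tau''$ on $\mathcal{C}(\Gr',\llbracket S\rrbracket^\pm)$ as maxima over the fibres of the projection $\varpi$, use the hypothesis $\llbracket s^{-1}\rrbracket=\llbracket s\rrbracket^{-1}$ to see that $\varpi$ is a graph homomorphism, identify $c''$ with the resulting first-passage function, and then read off \eqref{innerness2} from a geodesic exactly as in \cref{sec:fpp}. Your closing observation --- that the only delicate point is the structural identification, after which innerness is routine --- is precisely how the paper organises the argument.

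One caution on presentation: your two-line justification of $c''=T''$ seems to have the directions of the two inequalities transposed. Subadditivity of $c''$ gives an \emph{upper} bound $c''(\llbracket x\rrbracket)\le\sum c''(t_i)\circ\theta_{\cdots}$, not the inequality $T''\le c''$; and projecting a $\tau$-geodesic yields a quotient path whose $\tau''$-weight is \emph{at least} its $\tau$-weight (since $\tau''$ is a maximum over lifts), so that projection argument as written does not give $c''\le T''$ either. The correct pairing is: $c''\le T''$ by \emph{lifting} an arbitrary quotient path through the graph homomorphism (each lifted edge has $\tau$-weight bounded by the $\tau''$-weight of its image), and $T''\le c''$ by checking that the maximising fibre pair in the definition of $c''$ already witnesses the infimum defining $T''$ --- this is what the paper compresses into the phrase ``minimax property'' and ``direct consequence of the graph homomorphism''. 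The paper is no more explicit than you are here, so this is a point of exposition rather than a gap in strategy.
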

\begin{proof}
    Define, for each $x \in \Gr$ and every $\llbracket s \rrbracket \in \llbracket S \rrbracket^\pm$,    \[
        \tau\big(\llbracket x \rrbracket, \llbracket s \rrbracket\llbracket x \rrbracket\big) := \max_{1 \leq i, j \leq \kappa} \max_{\substack{~y \in z_{(j)}.\llbracket x \rrbracket~\\ h \in z_{(i)}.\llbracket s \rrbracket}} \tau(y, hy)
    \]
    and note that $\tau$ preserves the symmetry
    \[
        \tau\big(\llbracket x \rrbracket, \llbracket s \rrbracket\llbracket x \rrbracket\big)= \tau\Big(\llbracket s \rrbracket\llbracket x \rrbracket, \llbracket s \rrbracket^{-1}\big(\llbracket s \rrbracket\llbracket x \rrbracket\big)\Big) =\tau\big(\llbracket s \rrbracket\llbracket x \rrbracket, \llbracket x \rrbracket\big).
    \]

    Condition \eqref{all} imply that $c''$ is $\p$-a.s. finite and there exists of a (finite) geodesic path. Observe that $\llbracket s^{-1} \rrbracket = \llbracket s \rrbracket^{-1}$ for all $s \in S$ induces a graph homomorphism of $\mathcal{C}(\Gr,S)$ and $\mathcal{C}(\Gr',\llbracket S\rrbracket^\pm)$. In other words, for all $w_1, w_2 \in \llbracket w \rrbracket$ and $i,j \in \{1, \dots, \kappa\}$, $z_{(j)}.w_1 \not\sim z_{(i)}.w_2$ and if $x \sim y$ in $\mathcal{C}(\Gr, S)$, then $\llbracket x \rrbracket \sim \llbracket y \rrbracket$ in $\mathcal{C}\big(\Gr',\llbracket S \rrbracket^{\pm}\big)$.  Hence, one can easily verify by the minimax property that
    \begin{align*}
        c''(\llbracket x \rrbracket) &:= \max_{1 \leq i,j \leq \kappa}\max_{\substack{y \in z_{(j)}.\llbracket x \rrbracket\\ z \in z_{(i)}.\tor N}}\left( \inf_{\gamma \in \mathscr{P}(e, y)} \sum_{\{u,v\} \in \gamma} \tau(u,v)\right)\circ\tht_z\\
        &\phantom{:}= \inf_{\gamma \in \mathscr{P}(\llbracket e\rrbracket, \llbracket x \rrbracket)}\left( \sum_{\{\llbracket u\rrbracket,\llbracket v\rrbracket\} \in \gamma} \max_{1 \leq i, j \leq \kappa} \max_{\substack{~u' \in z_{(j)}.\llbracket u \rrbracket~\\ s' \in z_{(i)}.\llbracket vu^{-1} \rrbracket}}\tau(u',s'u')\right) \quad \p\text{-a.s.}
    \end{align*}

    This is a direct consequence of the graph homomorphism. Property \eqref{innerness2} arises naturally from the given definition when $\llbracket S \rrbracket^\pm \subseteq \Gr'\setminus[\Gr',\Gr']$.
\end{proof}

\begin{proposition} \label{prop:c.doubleprime}
    Under the same hypotheses stated in \cref{lm:N.ergodic}, it follows that the results in \cref{lm.T.bounded.nilpotent.as,lm.T.bounded.torsion.as,prop:asympt.equiv.c.cPrime,lm:subaddtive.ergodic,rmk:c.prime} also hold when replacing $c'$ with $c''$.
\end{proposition}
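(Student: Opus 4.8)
The plan is to run, essentially verbatim, the chain of arguments already established for $c'$ --- \cref{rmk:c.prime}, \cref{lm.T.bounded.nilpotent.as}, \cref{lm.T.bounded.torsion.as}, \cref{prop:asympt.equiv.c.cPrime}, and \cref{lm:subaddtive.ergodic} --- the only genuinely new input being the FPP representation of $c''$ on $\mathcal{C}\big(\Gr',\llbracket S\rrbracket^\pm\big)$ supplied by \cref{lm:N.ergodic}. First I would establish the analog of \cref{rmk:c.prime}: that $c''$ is a subadditive cocycle for $\theta$, compatible with each $\big([B],\F_B',\p_B'\big)$, well defined on $(B,\F_B,\p_B)$, and that the results extend $\p$-a.s. to $(\Om,\F,\p)$. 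Subadditivity of $c''$ is immediate from the minimax identity of \cref{lm:N.ergodic} by concatenating paths in $\mathcal{C}\big(\Gr',\llbracket S\rrbracket^\pm\big)$. The compatibility assertion is exactly where the extra hypotheses of \cref{lm:N.ergodic} are used: the outer maximum over $i,j\in\{1,\dots,\kappa\}$ in the definition of $c''$ removes all dependence on the coset representatives $z_{(i)}$, so that $c''$ has the same law on every slice $B\in\mathfrak{B}_N$; together with the $\tht\vert_N$-ergodicity and $\llbracket s^{-1}\rrbracket=\llbracket s\rrbracket^{-1}$ this makes $c''$ a $\tor N$-invariant, $B$-independent quantity, hence a legitimate cocycle for $\theta$ on each slice.

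Next I would verify condition \eqref{all} for $c''$, copying the opening of the proof of \cref{lm:subaddtive.ergodic}. Since $c''(\llbracket x\rrbracket)$ is a maximum of at most $\kappa^2|\tor N|^2$ terms $c(y)\circ\tht_z$ with $\|y\|_S\le\big\vert\llbracket x\rrbracket\big\vert_S^{\sup}$ and $\|z\|_S\le\mathtt{m}_{\kappa,q}$, a union bound against \eqref{all} gives $\p\big(c''(\llbracket x\rrbracket)\ge t\big)\le\kappa^2|\tor N|^2\,g(t)$ for $t>\upbeta\big\vert\llbracket x\rrbracket\big\vert_S^{\sup}$, and \eqref{eq:equiv.nom.inf.sup} then yields \eqref{all} for $c''$ relative to $\vert-\vert_S^{\inf}$ for a fresh $g''\in\mathcal{O}\big(t^{-(2D+\upkappa)}\big)$ and $\upbeta''>0$. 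For the lower bound I would use the pointwise domination $c''\ge c'$ --- the max defining $c''$ contains, upon taking $i=j$ with $z_{(i)}=e$, the max defining $c'$ --- so condition \eqref{aml} for $c''$ with respect to $\vert-\vert_S^{\inf}$, hence \cref{lm:phi.bi-Lipschitz} for $c''$ with $\phi(x^{\ab})=\inf_{n\in\N}\frac{\E[c''(\llbracket x\rrbracket^n)]}{n}$, is inherited from \cref{lm:subaddtive.ergodic}; by the asymptotic equivalence established next, this $\phi$ coincides with the one attached to $c$.

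I would then reprove \cref{lm.T.bounded.nilpotent.as}, \cref{lm.T.bounded.torsion.as}, and \cref{prop:asympt.equiv.c.cPrime} with $c''$ in place of $c'$. Writing $c''(\llbracket x\rrbracket)=c(y^\ast)\circ\tht_{z^\ast}$ for a maximizing pair, subadditivity bounds $\big\vert c(x)-c(y^\ast)\circ\tht_{z^\ast}\big\vert$ by the passage times of connecting paths of length at most $2\mathtt{m}_{\kappa,q}$ joining $x$ to $y^\ast(z^\ast)^{-1}$; running the same union bound over $B_S(e,rn)$ --- now with at most $\kappa^2|\tor N|^2|B_S(e,rn)|$ entries, each a sum of boundedly many edge weights, so the tail is still $\mathcal{O}\big(n^D g(\varepsilon n)\big)=\mathcal{O}(n^{-\upkappa})$ --- and applying the Borel--Cantelli lemma yields, $\p$-a.s., $\big\vert c(x)-c''(\llbracket x\rrbracket)\big\vert<\varepsilon n$ uniformly over $x\in B_S(e,rn)$ for $n$ large, together with its $\tht_{x_n}$-translated form \eqref{eq:aproxx.seq.c.c.prime} by the p.m.p. property. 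This is precisely \cref{prop:asympt.equiv.c.cPrime} for $c''$.

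The step I expect to be the main obstacle is the first one: confirming that $c''$, in spite of the maxima over the $z_{(i)}$ which act across distinct members of $\mathfrak{B}_N$, is a genuine subadditive cocycle compatible with each slice $\big([B],\F_B',\p_B'\big)$. Everything else is mechanical --- the domination $c''\ge c'$ disposes of all lower bounds, a finite-maximum union bound disposes of all upper bounds, and the Borel--Cantelli scheme is identical to the one already used --- so the real content is packaging the FPP and graph-homomorphism structure of \cref{lm:N.ergodic} into the cocycle framework of \cref{rmk:c.prime}.
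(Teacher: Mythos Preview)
Your proposal is correct and follows the same approach as the paper's own proof, which is itself essentially a one-line sketch asserting that the arguments of the cited lemmas carry over to $c''$ up to constant factors. Two small notes: your ``main obstacle'' dissolves because the hypothesis in \cref{lm:N.ergodic} that $\tht\vert_N$ is ergodic on $(\Om,\F,\p)$ forces $|\mathfrak{B}_N|=1$ (cf.\ the proof of \cref{cor:fpp.virt.nil}), so there is only one slice and no cross-slice compatibility issue at all; and the paper adds one ingredient you omit --- a Borel--Cantelli estimate showing $\max_{x\in B_S(e,n)}\max_{y\in\bigcup_j z_{(j)}.\tor N}c(y)\circ\tht_x\in o(n)$ $\p$-a.s., which justifies the convention $c''(\llbracket e\rrbracket)=0$ for the asymptotic analysis.
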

\begin{proof}
    Notice that 
    \[\p\left(\max_{x \in B_S(e,n)}\max_{~y \in \bigcup_{j=1}^\kappa z_{(j)}.\tor N}c(y)\circ \tht_{x} > \sqrt{n}\right) \in \mathcal{O}(1/n^\upkappa).\]

    Consequently, $\max\limits_{x \in B_S(e,n)}\max\limits_{~y \in \bigcup_{j=1}^\kappa z_{(j)}.\tor N}c(y)\circ \tht_{x} \in o(n)$, $\p$-a.s., as $n \uparrow +\infty$. Therefore, defining $c''\big( \llbracket e \rrbracket \big) = 0$ is a suitable choice for investigating the asymptotic cone of $c''$ in comparison to $c$.
    
     The arguments in the proofs of \cref{lm.T.bounded.nilpotent.as,lm.T.bounded.torsion.as,prop:asympt.equiv.c.cPrime,lm:subaddtive.ergodic} can be repeated for $c''$, yielding the same properties up to a constant factor.
\end{proof}

\begin{corollary} \label{cor:fpp.virt.nil}
    Let $(\Gr,.)$ be a finitely generated group with polynomial growth rate $D \geq 1$ and $\Gr'/[\Gr',\Gr']$ torsion-free. Consider $c:\Gr\times\Om \to \R_{\geq0}$ to be a subadditive cocycle associated with $d_\omega$ and a p.m.p. ergodic group action $\tht\big\vert_N:N \curvearrowright (\Om,\F,\p)$. 
    
    Suppose that $c$ describes a FPP model which satisfies conditions \eqref{all} and \eqref{aml2} for a finite symmetric generating set $S \subseteq \Gr$ so that 
    \begin{itemize}
        \item[(i)] For all $s \in S$, $\llbracket s^{-1} \rrbracket = \llbracket s \rrbracket^{-1}$, and
        \item[(ii)] $\llbracket S \rrbracket^{\pm} \subseteq \Gr' \setminus [\Gr',\Gr']$ generates $\Gr'$.
    \end{itemize}
    
    Then
    \begin{equation*} 
        \quad \quad \left(\Gr,\frac{1}{n}d_\omega,e\right) \GHto \left(G_\infty,d_\phi,\mathlcal{e}\right) \quad\quad \p\text{-a.s.}
    \end{equation*}
    where $G_\infty$ is a simply connected graded Lie group, and $d_\phi$ is a quasimetric homogeneous with respect to a family of homotheties $\{\delta_t\}_{t>0}$. Moreover, $d_\phi$ is bi-Lipschitz equivalent to $d_\infty$ on $G_\infty$.
\end{corollary}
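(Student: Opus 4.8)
The plan is to deduce \cref{cor:fpp.virt.nil} from \cref{shape.thm} applied to $c''$ on $\Gr'$, exactly paralleling the proof of \cref{thm:shape.polynomial} but with the auxiliary cocycle $c''$ in place of $c'$; the FPP hypotheses (i)--(ii) are precisely what is needed to supply the innerness assumption for $c''$ for free. First I would deal with the ergodicity bookkeeping: since $\tht\vert_N:N\curvearrowright(\Om,\F,\p)$ is assumed p.m.p.\ ergodic \emph{on the whole space}, the decomposition furnished by \cref{lm:erg.finite.index} is trivial (one may take $\mathfrak{B}_N=\{\Om\}$), so the obstruction noted just before \cref{lm:N.ergodic} evaporates and $c''$ is a globally well-defined subadditive cocycle on $(\Om,\F,\p)$; moreover \cref{lm:erg.torsion.quotient} shows the induced action $\theta:\Gr'\curvearrowright([\Om],\F',\p')$ is p.m.p.\ ergodic, which is the setting in which \cref{shape.thm} will be invoked.

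Next I would collect the properties of $c''$. By \cref{lm:N.ergodic}, hypothesis (i) implies that $c''$ determines an FPP model on $\mathcal{C}(\Gr',\llbracket S\rrbracket^\pm)$, and hypothesis (ii), namely $\llbracket S\rrbracket^\pm\subseteq\Gr'\setminus[\Gr',\Gr']$, yields the innerness property \eqref{innerness2} for $c''$ via the minimax identity in that lemma. By \cref{prop:c.doubleprime}, the conclusions of \cref{lm.T.bounded.nilpotent.as,lm.T.bounded.torsion.as,prop:asympt.equiv.c.cPrime,lm:subaddtive.ergodic,rmk:c.prime} all remain valid with $c''$ replacing $c'$; in particular \eqref{all} together with \eqref{aml2} (and (ii)) gives that $c''$ satisfies \eqref{all} and \eqref{aml} relative to the norm $\vert-\vert_S^{\inf}$ on $\Gr'$, with associated homogeneous subadditive $\phi$ as in \cref{lm:subaddtive.ergodic} and \cref{lm:phi.bi-Lipschitz}, and that $c$ and $c''$ are asymptotically equivalent on $\Gr$.

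With this in hand, $\Gr'=N/\tor N$ is torsion-free nilpotent with $\Gr'/[\Gr',\Gr']$ torsion-free by hypothesis, $\llbracket S\rrbracket^\pm$ generates $\Gr'$, and $c''$ satisfies \eqref{all}, \eqref{aml}, and the innerness assumption, so \cref{shape.thm} applies and gives
\[
    \left(\Gr',\tfrac1n d''_\omega,e\right)\GHto\left(G_\infty,d_\phi,\mathlcal{e}\right)\qquad\p\text{-a.s.},
\]
where $d''_\omega$ is the random pseudo-quasi metric associated with $c''$, and $d_\phi$ is bi-Lipschitz equivalent to $d_\infty$. Finally I would transfer the convergence from $\Gr'$ back to $\Gr$: this step is verbatim the last portion of the proof of \cref{thm:shape.polynomial}, with $c'$ replaced by $c''$ --- one uses Pansu's theorem (\cref{thm:Pansu}) to identify the asymptotic cones of $\Gr$ and $\Gr'$, the asymptotic equivalence of $c$ and $c''$ (\cref{prop:asympt.equiv.c.cPrime} as extended by \cref{prop:c.doubleprime}) to move the $\frac1n$-rescaled limit across, and \cref{lm:local.bound} to handle the change of base point, so that $\frac1{t_n'}d_\omega(v_{t_n'},u_nv_{t_n'})\to\Phi(\mathlcal{h}'\mathlcal{h}^{-1})=d_\phi(\mathlcal{h},\mathlcal{h}')$ $\p$-a.s.\ for all sequences with $\frac1{t_n'}\bull u_n\to\mathlcal{h}'\mathlcal{h}^{-1}$ and $\frac1n\bull v_n\to\mathlcal{h}$.

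The only genuinely delicate point --- and hence the step I expect to be the main obstacle --- is verifying that $c''$ is a legitimate globally defined cocycle with all the hypotheses of \cref{shape.thm}; this is exactly why the corollary assumes ergodicity of $\tht\vert_N$ on all of $\Om$ rather than merely ergodicity of $\tht$ on $\Gr$ (which would only give ergodicity of $N$ on the pieces $B\in\mathfrak{B}_N$ and leave $c''$ ill-defined across distinct pieces). Granting that, the remainder is a matter of citing \cref{lm:N.ergodic} and \cref{prop:c.doubleprime} and re-running the argument of \cref{thm:shape.polynomial}.
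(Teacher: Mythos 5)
Your proposal is correct and follows the paper's own route: observe that the global ergodicity of $\tht\vert_N$ forces $\mathfrak{B}_N=\{\Om\}$, so $c''$ is globally well defined; invoke \cref{lm:N.ergodic} and \cref{prop:c.doubleprime} to verify the hypotheses of \cref{shape.thm} for $c''$ on $\Gr'$; and then re-run the transfer argument from the proof of \cref{thm:shape.polynomial} with $c''$ in place of $c'$. Your write-up is a more detailed unpacking of the paper's compressed two-sentence proof, but it is the same argument.
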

\begin{proof}
    First, according to \cref{prop:c.doubleprime}, the random variables $c'$ and $c''$ share similar properties. Observe that $|\mathfrak{B}_N|=1$, ensuring that $c''$ well-defined and a suitable replacement of $c'$ in the proof of \cref{thm:shape.polynomial}, which establishes the result.
\end{proof}

The next example highlights a case where $\tht\big\vert_N$ acts ergodically on the probability space followed by an example of virtually nilpotent group with generating set satisfying items (i) and (ii) of \cref{cor:fpp.virt.nil}.

\begin{example}[Independent FPP models] \label{exmpl:independent.FPP}

The subadditive cocycle $c$ exhibits equivariance. Recall properties fiscussed in \cref{sec:fpp} for FPP models and notice that, for all $x,y \in \Gr$ and $s \in S$,
\[\tau(x,sx) \sim \tau\big(y,s^{\pm1}y\big).\]

Consider that the random weights are independent, but not necessarily identically distributed (see \cite{benjamini2015} for FPP with i.i.d. random variables). Let us define $S' := \big\{ \{s,s^{-1}\}: s \in S \big\}$ and set $\varsigma(s'):= s \in s'$ for $s' \in S'$, \textit{i.e.}, the function $\varsigma$ fixes one element of each $s' \in S'$.

Suppose that, for all $s \in S$, $s^2 \neq e$ and consider $\nu^{(s')}$ to be the law of $\tau(x,\varsigma(s')x)$ with $x \in \Gr$ and  $s \in s' \in S'$. Thus, one can write
\[\p \equiv \left(\bigotimes_{s' \in S'}\nu^{(s')}\right)^{\otimes\Gr} = \left(\bigotimes_{j=1}^\kappa\bigotimes_{s' \in S'}\nu^{(s')}\right)^{\otimes N} \equiv \bigotimes_{x \in N} \nu^{(x)},\]
where, for each $x \in N$, $\nu^{(x)} \equiv \bigotimes_{j=1}^\kappa\bigotimes_{s' \in S'}\nu^{(s')}$. Let $E \in \F$ be such that, for all $x \in N$, $\tht_x(E)=E$. Then, for all $x,y \in N$,
\[\nu^{(x)}(E) = \nu^{(y)}(E)=: \mathtt{k}_E \in [0,1].\]

The condition of polynomial growth rate $D \geq 1$ ensures that $N$ is countably infinite. Consequently,
\[\p(E) = \prod_{x \in N} \mathtt{k}_E \in \{0,1\}.\]

Therefore, $\tht\big\vert_N$ as defined in \cref{sec:fpp} constitutes a probability measure-preserving (p.m.p.) ergodic group action for independent FPP models.
\end{example}

\begin{example}[Direct product]
    Consider $\mathrm{L}$ a torsion-free nilpotent group with torsion-free abelianization and a symmetric finite generating set $S_{\mathrm{L}} \subseteq \mathrm{L} \setminus [\mathrm{L},\mathrm{L}]$. Set $M$ to be a finite group. Recall the properties highlighted in \cref{sec:examples.virt.nil}. Let us define
    \[\Gr = \mathrm{L}\times M, \quad \text{and} \quad S = S_{\mathrm{L}} \times M.\]
    
    Then $S$ is a symmetric finite generating set of $\Gr$. Fix $\uppi_N(x,m) = (x, e)$ for all $x\in \mathrm{L}$ and $m \in M$. One can easily see that $\Gr' \cong \mathrm{L}$ with $\llbracket S \rrbracket = \llbracket S \rrbracket^{\pm} \cong S_{\mathrm{L}}$.

    Furthermore, for any $(x,m) \in \Gr$, the inverse $(x,m)^{-1}$ is given by $(x^{-1},m^{-1})$, leading to  \[\llbracket (x,m)^{-1} \rrbracket \cong x^{-1} \cong \llbracket (x,m) \rrbracket^{-1}.\]

    As a consequence, both items (i) and (ii) of \cref{cor:fpp.virt.nil} hold when $\Gr$ is the direct product equipped with the generating set $S$ defined above.
\end{example}

\section{Applications to random growth models} \label{sec:examples}

In this section, we delve into three distinct examples that serve as applications of the main results outlined in this article for a random growth on $\mathcal{C}(\Gr,S)$. These examples have been deliberately chosen to address scenarios that fall outside the scope of previous works, thereby offering a nuanced examination of the versatility and robustness of our established theorems.

The first example considers a First-Passage Percolation (FPP) model with dependent random variables, challenging the assumption of $L^\infty$, since we allow random weights to be zero with a strict positive probability. Transitioning to the second example, we investigate a FPP model with independent random variables that are not identically distributed and also not $L^\infty$. The third example shifts focus to an interacting particle system that extends is not a FPP model. Notably, this model fails to meet the conditions found in the literature.

\begin{example}[First-Passage Percolation for a Random Coloring of $\Gr$]\label{ex:color}
Let us now consider a dependent Bernoulli FPP model based on the random coloring studied by Fontes and Newman \cite{fontes1993}. Set $\{X_x\}_{x \in \Gr}$ to be a family of i.i.d. random variables taking values in a finite set of colors $\mathlcal{F}$. The model generates color clusters by assigning weight $0$ to edges between sites with same color and weight $1$ otherwise. We define for every edge $u \sim v$
\[
    \tau(u,v) = \mathbbm{1}(X_{u} \neq X_{v}) ,
\]
Set for each self-avoinding path $\gamma \in \mathscr{P}(x,y)$ the random length $T(\gamma)= \sum_{\mathtt{e} \in \gamma}\tau(\mathtt{e})$. The first-passage time is
\[T(x,y) := \inf_{\gamma \in \mathscr{P}(x,y)}T(\gamma)\]

Let $p_{\mathlcal{s}}:= \p(X_x = \mathlcal{s})$ then one can verify that $T(x,y)$ is a FPP model with dependent identically distributed passage times $\tau(x,y) \sim \operatorname{Ber}\left(1-\sum_{\mathlcal{s} \in \mathlcal{F}}p_{\mathlcal{s}}^2\right)$. One can easily see that $c(x):= T(e,x)$ is a subadditive cocycle and the translations $\tht$ are ergodic due to the fact that $\{X_x\}_{x \in \Gr}$ are i.i.d. random variables . 

Observe that $c(x)$ is bounded above by the word norm $\|x\|_S$, items \eqref{all} and \eqref{innerness} are immediately satisfied. Consider $p_{\mathlcal{s}} \in (0,1)$ for all $\mathlcal{s} \in \mathlcal{F}$. Set
\begin{equation*}
    p:= \max_{\mathlcal{s} \in \mathlcal{F}} ~p_{\mathlcal{s}}~, \quad q :=  \max_{\mathlcal{s} \in \mathlcal{F}} ~(1-p_{\mathlcal{s}}), \quad\text{ and }\quad p' :=\frac{p}{p+q}.
\end{equation*}
The lemma below establishes a sufficient condition for \eqref{aml} and \eqref{aml2}.

\begin{lemma} \label{lm:aml.colors}
    Consider the Random Coloring Model of $\Gr$ on $\mathcal{C}(\Gr,S)$ satisfying
    \begin{equation} \label{eq:bound.colors}
        p < \frac{1}{|S|-1},
    \end{equation}
    then \eqref{aml} and \eqref{aml2} hold true.
\end{lemma}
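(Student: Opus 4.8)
The plan is to reduce both \eqref{aml} and \eqref{aml2} to a single uniform estimate --- a strictly positive time constant for $T$ --- and then to derive that estimate from a subcritical Galton--Watson comparison for the monochromatic clusters, subcriticality being exactly what \eqref{eq:bound.colors} provides. For the reduction, note first that $c(x)=T(e,x)\le\|x\|_S$, so the cocycle is bounded; I claim it suffices to produce $a>0$ and $R_0>0$ with $\E[T(e,z)]\ge a\|z\|_S$ for every $z$ with $\|z\|_S\ge R_0$. Indeed, \eqref{aml2} then follows by taking, for $x$ of infinite order, any $n_j\uparrow+\infty$ with $\|x^{n_j}\|_S\ge R_0$ (and trivially for torsion $x$, since $x^{n_j}=e$ along multiples of the order, so both sides vanish). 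For \eqref{aml} the only extra point is uniformity in $y\in x[\Gr,\Gr]$: choosing a homomorphism $\psi\colon\Gr\to\Z$ with $\psi(x)\neq0$ --- available because $x\notin[\Gr,\Gr]$ and the abelianization is torsion-free --- and writing $N=|\psi(x)|\ge1$, $D_\psi=\max_{s\in S}|\psi(s)|$, one gets $\|y^n\|_S\ge\|y^n\|_S^{\ab}\ge|\psi(y^n)|/D_\psi=nN/D_\psi$ simultaneously for all $y$ in the coset, so $n_j:=j+\lceil R_0D_\psi/N\rceil$ works for all of them at once and $\E[T(e,y^{n_j})]\ge a\|y^{n_j}\|_S\ge a\|y^{n_j}\|_S^{\ab}$.

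To get the uniform bound, let $\mathcal K_v$ be the monochromatic cluster of $v$, the component of $v$ in the subgraph of Cayley edges whose two endpoints carry $v$'s colour. Exploring $\mathcal K_v$ from $v$ and conditioning on $X_v=\mathlcal{s}$, every freshly revealed vertex offers at most $|S|-1$ unexplored neighbours, each joining independently with probability $p_{\mathlcal{s}}\le p$; hence $|\mathcal K_v|$ and $\operatorname{rad}_S(\mathcal K_v):=\max_{w\in\mathcal K_v}d_S(v,w)$ are stochastically dominated, uniformly in $v$ and $\mathlcal{s}$, by the total progeny and the height of a Galton--Watson tree with $\operatorname{Binomial}(|S|-1,p)$ offspring. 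Since \eqref{eq:bound.colors} says $(|S|-1)p<1$, this tree is subcritical, so $\E|\mathcal K_v|\le(1-(|S|-1)p)^{-1}$ and $\p\big(\operatorname{rad}_S(\mathcal K_v)\ge k\big)\le\big((|S|-1)p\big)^{k}$, uniformly in $v$.

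Next, declaring two clusters adjacent when a Cayley edge joins them, a minimizing path for $T(e,z)$ decomposes into at most $T(e,z)+1$ maximal monochromatic runs, each inside one cluster and with consecutive runs in adjacent clusters; thus $T(e,z)$ equals the cluster-graph distance from $\mathcal K_e$ to $\mathcal K_z$, and $\{T(e,z)\le t\}$ forces adjacent clusters $\mathcal K^{(0)}\ni e,\dots,\mathcal K^{(t)}\ni z$ whose union is connected, whence $\|z\|_S\le t+2\sum_{i=0}^{t}\operatorname{rad}_S(\mathcal K^{(i)})$. Taking $t=\lfloor a\|z\|_S\rfloor$ with $a\in(0,\tfrac14)$ small, the event $\{T(e,z)\le t\}$ then entails a chain of at most $t+1$ pairwise-adjacent monochromatic clusters rooted at $e$ with total radius $\ge\tfrac14\|z\|_S$; exploring the chain cluster by cluster (the first from $e$, each subsequent one attached at one of the $\le(|S|-1)|\mathcal K^{(i)}|$ boundary sites of its predecessor) couples it with a branching process controlled by the geometric tail above, and a union bound yields a geometric decay of $\p\big(T(e,z)\le a\|z\|_S\big)$ in $\|z\|_S$. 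Summing this bound gives $\E[T(e,z)]\ge\tfrac a2\|z\|_S$ for $\|z\|_S$ large, which is the required estimate, and together with the reduction this proves both \eqref{aml} and \eqref{aml2}.

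The crux --- and the only step that genuinely exploits the sharp form of \eqref{eq:bound.colors} rather than some smaller upper bound on $p$ --- is the final union bound: one must not collapse the chain into a single lattice animal $\bigcup_i\mathcal K^{(i)}$ and enumerate that (which costs a multiplicative constant and forces a strictly smaller $p$), but keep the $(|S|-1)$ branching factor and the colour-matching probability $p$ coupled, running the cluster-chain exploration as an honest Galton--Watson-type process; this is precisely where the auxiliary ratios $p$, $q$, and $p'=p/(p+q)$ of the statement enter, to weigh the cost of traversing a cluster against that of jumping to the next one and keep the resulting series summable. Everything else --- the reduction, the cluster domination, and the identification of $T$ with a cluster-graph distance --- is routine.
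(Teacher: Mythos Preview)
Your reduction in the first paragraph is sound and in fact more careful than the paper's (which simply asserts that $\E[c(x)]\ge a\|x\|_S$ for large $\|x\|_S$ ``yields \eqref{aml} and \eqref{aml2} as a consequence'' without addressing the uniformity of the sequence $n_j$ over the coset). Your cluster--percolation picture in the second and third paragraphs is also correct: the monochromatic clusters are dominated by a subcritical Galton--Watson tree under \eqref{eq:bound.colors}, and $T(e,z)$ is exactly the cluster--graph distance.

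The gap is precisely where you locate it. The ``honest Galton--Watson--type'' cluster--chain exploration you describe is not carried out, and it is not clear that it delivers the sharp threshold $(|S|-1)p<1$: the branching factor for attaching the next cluster is $(|S|-1)|\mathcal K^{(i)}|$, not $(|S|-1)$, so the combinatorial cost of the chain is entangled with the random cluster sizes, and a naive first--moment bound picks up an extra factor. In fact the cleanest way to close your argument is to note that a chain of $\le t+1$ clusters reaching $z$ yields a \emph{self--avoiding} Cayley path from $e$ of length $\ge\|z\|_S$ with at most $t$ colour changes (distinct clusters are disjoint, and within each one traverses a geodesic), and then to bound $\p\big(\exists\,\gamma\in\mathscr P_n:\ T(\gamma)\le\upalpha n\big)$ directly. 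That is exactly what the paper does: it takes the union bound over $\mathscr P_n$ (cardinality $\le C(|S|-1)^n$), bounds $\p\big(T(\gamma)=m\big)\le\binom{n}{m}p^{\,n-m}q^{\,m}=(p+q)^nP(Y=m)$ with $Y\sim\mathrm{Binomial}(n,1-p')$, and applies a Chernoff bound so that the base tends to $(|S|-1)(p+q)\,p'=(|S|-1)p<1$ as $\upalpha\downarrow0$. This is where $p$, $q$, $p'$ actually enter, and it is a one--line computation once set up --- your cluster detour is not needed to reach it, and does not obviously replace it.
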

\begin{proof}
    Let $\gamma = (x_0=e, x_1, \dots, x_n) \in \mathscr{P}_{n}$ with $\mathscr{P}_n$ the set of all self-avoiding paths in $\mathcal{C}(\Gr,S)$ of graph length $n$ starting at $e$. Fix $[n]:=\{1, \dots, n\}$, then
    \begin{align*}
    \p\big(T(\gamma) = m\big) &\leq \sum_{\substack{A \subseteq [n]\\ |A|=m}}\prod_{i \in [n]\setminus A}\mathbb{P}(X_{x_i}=X_{x_{i-1}}|X_{x_{i-1}}) \prod_{j \in A}\mathbb{P}(X_{x_i}\neq X_{x_{i-1}}|X_{x_{i-1}})\\ &\leq \binom{n}{m}p^{n-m}q^m = (p+q)^n P(Y= m)
    \end{align*}
    where $Y \sim \operatorname{Binomial}(n, 1-p')$ with respect to $P$. Let us regard $\|x\|_S=n$, thus
    \begin{align*}
        \p\big(c(x) \leq \upalpha \|x\|_S\big) &\leq \p\big(\exists \gamma \in \mathscr{P}_n:T(\gamma) \leq \upalpha n\big)\\ &\leq \big|\mathscr{P}_n\big|(p+q)^n \cdot P(Y \leq \upalpha n).
    \end{align*}
    It is a well-known fact that $|\mathscr{P}_n| \leq |S|(|S|-1)^{n-1}$. Therefore, there exists $\mathtt{C}>0$ such that $|\mathscr{P}_n| \leq \mathtt{C}(|S|-1)^n$. By Chernoff bound, one can obtain
    \begin{align}
        P(Y \leq \upalpha n) &\leq \exp\left( n\left(\upalpha-1)\log\frac{1-\upalpha}{1-p'} -\upalpha \log\frac{\upalpha}{p'}\right) \right) \nonumber \\
        & =\left((p')^\upalpha(1-p')^{(1-\upalpha)}\upalpha^{-\upalpha}(1-\upalpha)^{\alpha-1}\right)^n. \label{eq:binom.color}
    \end{align}
    Observe that the base of \eqref{eq:binom.color} converges to $p'$ as $\upalpha \downarrow 0$. Hence, there exist $\upalpha, p''>0$ such that $P(Y \leq \upalpha n) \leq \left(p''\right)^n$ with $p'<p''< 1/((p+1)(|S|-1))$ when $p$ satisfies \eqref{eq:bound.colors}. It then follows that there exists $\mathtt{C}'>0$ such that
    \[\p\big(c(x) \leq \upalpha \|x\|_S\big) \leq \mathtt{C} \big(p''(p+q)(|S|-1) \big)^n = \mathtt{C} \exp(-\mathtt{C}'n).\]

    Let now $a:=\upalpha/2$ and choose $\|x\|_S \gg 1$ so that $\p\big(c(x) \leq \upalpha\|x\|_S\big) \leq 1/2$, then $a\|x\|_S \leq \E[c(x)]$, which yields \eqref{aml} and \eqref{aml2} as a consequence.
\end{proof}

Similarly to \cref{exmpl:independent.FPP}, let $\nu$ be the law of the random coloring of a vertex. Then \[\p\equiv \nu^{\otimes\Gr} = \Bigg(\bigotimes_{j=1}^\kappa \nu\Bigg)^{\otimes N} \equiv\bigotimes_{x\in N}\nu^{(x)}\] with $\nu^{(x)} \equiv \bigotimes_{j=1}^\kappa \nu$. By the same reasoning employed for $\nu^{(x)}$ in \cref{exmpl:independent.FPP}, we verify that $\tht\big\vert_N$ acts ergodically on $(\Om, \F, \p)$.

Hence, under the assumption of \eqref{eq:bound.colors} and based on the aforementioned results, the Shape Theorems \ref{shape.thm} and \ref{thm:shape.polynomial} are applicable to the random coloring of $\Gr=N$ nilpotent with a finite generating set $S \subseteq N \setminus \big( [N,N] \cup \tor N\big)$ or in the case where $\Gr'$ is abelian. Moreover, under the fulfillment of conditions (i) and (ii) in \cref{cor:fpp.virt.nil}, the existence of the limiting shape is also guaranteed when $\Gr$ is virtually nilpotent.

\begin{remark}
    Observe that \eqref{eq:bound.colors} provides a lower bound for the critical probability of site percolation on $\mathcal{C}(\Gr,S)$ (see for instance \cite{grimmett1998}). To verify that, fix a color $\mathlcal{s} \in \mathlcal{F}$ and we say that a site $x \in \Gr$ is open when $X_x=\mathlcal{s}$. Therefore, one can write
    \[\tau_{\operatorname{site}}^{(\mathlcal{s})}(x,y)= \mathbbm{1}(X_x\neq \mathlcal{s}\text{ or }X_y\neq \mathlcal{s}).\]
    Note that it stochastically dominates with $\tau \leq \tau_{\operatorname{site}}^{(\mathlcal{s})}$ $\p$-a.s. The open edges are the new edges of length zero. By \cref{lm:aml.colors}, we can apply \cref{shape.thm} to obtain that, $\p$-a.s., there is no infinite open cluster in $\mathcal{C}(\Gr,S)$ when $p_{\mathlcal{s}}< \frac{1}{\vert S \vert-1}$.
\end{remark}

\end{example}

\begin{example}[Richardson's Growth Model in a Translation Invariant Random Environment] \label{ex:richardson}

In this example, we define a variant of the Richardson's Growth Model which is commonly employed to describe the spread of infectious diseases. This version of the model involves independent random variables that are not identically distributed (see \cite{garet2012,richardson1973} for similar models).  Specifically, we consider that the transmission rate of the disease between neighboring sites is randomly chosen. The distribution of this variable will vary depending on the directions of the Cayley graph.

Consider that the infection rates between neighbors are determined by a random environment taking values in $\Uplambda:=(0,+\infty)^E$.  Let $S':=\big\{\{s,s^{-1}\}:s \in S\big\}$ be the set of directions of $\mathcal{C}(\Gr,S)$. Consider $\{\uplambda_{s'}\}_{s' \in S'}$ a set of strictly positive random variables that are independent over a probability measure $\nu$. Set $\big(\uplambda(\mathtt{e})\big)_{\mathtt{e} \in E}$ to be a collection of independent random variables over $\nu$ such that
\[
    \uplambda(x,sx) \sim \uplambda_{s'} \quad \text{with }s'= \{s^{\pm 1}\}.
\]

Let us regard $\uplambda \in \Uplambda$ as a fixed realization of the random environment. The growth process is defined by the family of independent random variables ${\{\tau(x,sx): x\in \Gr, s \in S\}}$ such that 
\begin{equation} \label{eq:t.Richardson}
    \tau(x,sx) \sim \text{Exp}\big(\uplambda(x,sx)\big).
\end{equation}

Set $\p_\uplambda$ to be the quenched probability law of \eqref{eq:t.Richardson}. We write, for each path $\upgamma \in \mathscr{P}(x, y)$ with $x,y \in \Gr$, its random length $T(\upgamma):= \sum_{\mathtt{e} \in \upgamma}\tau(\mathtt{e})$.

The first-passage time is
\[
c(x):= \inf_{\upgamma \in \mathscr{P}(e,x)}T(\upgamma).
\]
It is straightforward to see that $c(x)$ is subadditive. However, the group action $\tht$ is not ergodic over $\p_\uplambda$ for a given $\uplambda \in \Uplambda$. Let $\p(\cdot) = \int_\Uplambda \p_\upgamma(\cdot)d\nu(\uplambda)$ be the annealed probability. It then follows that $\tht$ preserves the measure $\p$ and it is ergodic.

Note that $c(x)$ defines a First-Passage Percolation (FPP) model, which we refer to as Richardson's Growth Model in a Translation Random Environment (RGTRE). In the following, we establish that conditions \eqref{all}, \eqref{aml}, and \eqref{aml2} are met.

\begin{lemma} \label{lm:richardson.all}
    Consider the RGTRE defined as above. Then there exist $\upbeta, \mathtt{C},\mathtt{C}'>0$ such that, for all $x \in \Gr$,
    \[
        \p\big(c(x) \geq t\big) \leq \mathtt{C} \exp\big(-\mathtt{C}'t\big)
    \]
    for all $t \geq \upbeta \|x\|_S$.
\end{lemma}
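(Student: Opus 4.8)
The plan is to bound the first-passage time along a single deterministic path and use the exponential tails of the edge weights. First I would fix a word-geodesic representation of $x$, say $x = s_m \cdots s_1$ with $m = \|x\|_S$, which gives a self-avoiding path $\gamma_x \in \mathscr{P}(e,x)$ of graph length $m$. Then $c(x) \le T(\gamma_x) = \sum_{i=1}^m \tau(\mathtt{e}_i)$, where the $\tau(\mathtt{e}_i)$ are, \emph{conditionally on the environment} $\uplambda$, independent exponential random variables with rates $\uplambda(\mathtt{e}_i)$. The key structural point is that along a self-avoiding path each edge is distinct, so under the annealed law $\p(\cdot) = \int_\Uplambda \p_\uplambda(\cdot)\,d\nu(\uplambda)$ the variables $\{\tau(\mathtt{e}_i)\}_{i=1}^m$ are genuinely independent (the environment variables attached to distinct edges are independent under $\nu$, and the exponential draws are conditionally independent given the environment).

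Next I would produce a uniform exponential-moment bound. Since there are only finitely many directions $s' \in S'$ and each rate variable $\uplambda_{s'}$ is $\nu$-a.s.\ strictly positive, one can choose $\vartheta > 0$ small enough that $\E[e^{\vartheta \tau(\mathtt{e})}] = \E_\nu\!\big[\tfrac{\uplambda(\mathtt{e})}{\uplambda(\mathtt{e}) - \vartheta}\big]$ is finite and bounded by some constant $\mathtt{a} < \infty$, uniformly over all edges $\mathtt{e}$; here one only needs $\nu(\uplambda_{s'} > \vartheta) $ to carry enough mass and a mild integrability assumption so the expectation converges (if the model assumes the rates are bounded below, or have a suitable moment, this is immediate — I would state precisely whichever hypothesis on $\nu$ the paper intends). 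Then by independence, $\E[e^{\vartheta T(\gamma_x)}] \le \mathtt{a}^m$, and Markov's inequality gives
\[
    \p\big(c(x) \ge t\big) \le \p\big(T(\gamma_x) \ge t\big) \le \mathtt{a}^m e^{-\vartheta t} = \exp\big(m \log \mathtt{a} - \vartheta t\big).
\]
Choosing $\upbeta := \tfrac{2\log\mathtt{a}}{\vartheta} \vee 1$, for $t \ge \upbeta \|x\|_S = \upbeta m$ we get $m \log \mathtt{a} \le \tfrac{\vartheta}{2} t$, hence $\p(c(x) \ge t) \le \exp(-\tfrac{\vartheta}{2} t)$, which is the claimed bound with $\mathtt{C} = 1$ and $\mathtt{C}' = \vartheta/2$ (or one can absorb a harmless constant into $\mathtt{C}$ if a cleaner threshold is desired).

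The main obstacle is the uniform control of the exponential moment $\E[e^{\vartheta\tau(\mathtt{e})}]$ over all edges and directions: an exponential variable $\mathrm{Exp}(\lambda)$ has $e^{\vartheta\cdot}$-moment finite only for $\vartheta < \lambda$, so if the random rates $\uplambda_{s'}$ can be arbitrarily small with positive $\nu$-probability the naive bound fails. This is handled by the annealed averaging — one needs $\E_\nu[(\uplambda_{s'} - \vartheta)^{-1}\uplambda_{s'}] < \infty$ for some $\vartheta > 0$, which holds under a mild moment/lower-tail condition on $\nu$ (e.g.\ $\uplambda_{s'}$ bounded away from $0$, or $\E_\nu[\uplambda_{s'}^{-1}]<\infty$ together with a lower bound on its essential infimum); I would simply record the required condition on $\nu$ and remark that it is satisfied in the examples of interest. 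Everything else (the geodesic path, distinctness of edges along a self-avoiding path, the Chernoff step, the choice of $\upbeta$) is routine.
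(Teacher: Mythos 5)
Your proof takes essentially the same route as the paper: restrict to a $d_S$-geodesic path, use independence of the edge weights along that path, apply a Chernoff/Markov bound to the exponential moment, and choose $\upbeta$ large enough to absorb the $\mathtt{a}^m$ factor. The only cosmetic difference is that you compute the annealed moment $\E[e^{\vartheta\tau(\mathtt{e})}]=\E_\nu\big[\tfrac{\uplambda(\mathtt{e})}{\uplambda(\mathtt{e})-\vartheta}\big]$ directly, whereas the paper works quenched (bounding $\E_{\uplambda}[e^{\alpha\tau(\mathtt{e})}]\le 2$) and then integrates over the environment; both roads lead to the same estimate.

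Your caution about the uniform exponential-moment bound is warranted and, in fact, sharper than the paper's treatment. The paper sets $\alpha=\thickbar{\uplambda}_{\min}/2$ with $\thickbar{\uplambda}_{\min}=\min_{s'\in S'}\E[\uplambda_{s'}]$ and then asserts each factor is $\le 2$, but the geometric series $\sum_{m\ge 0}(\alpha/\uplambda(\mathtt{e}))^m$ is bounded by $2$ only when $\uplambda(\mathtt{e})\ge 2\alpha=\thickbar{\uplambda}_{\min}$ holds pointwise, which is a lower bound on the random rate, not on its mean. Thus the paper's argument (and yours) implicitly requires the rates $\uplambda_{s'}$ to be $\nu$-essentially bounded away from $0$ (or an equivalent lower-tail condition); without this the annealed tail of a single edge weight can decay only polynomially, and the stated exponential bound fails. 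You correctly name this as the one place where a hypothesis on $\nu$ must be recorded; everything else in your argument is routine and matches the paper.
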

\begin{proof}
    Let $\upgamma \in \mathscr{P}(e,x)$ be a $d_S$-geodesic with $\|x\|_S=n$. Then one has by Chernoff bound and the independence of $\{\tau(\mathtt{e})\}_{\mathtt{e}\in E}$ that 
    \[
        \p_\uplambda\big(c(x) \geq t\big) \leq \p_\uplambda\big(T(\upgamma) \geq t\big) \leq \frac{\prod_{\mathtt{e} \in \gamma}\E_\uplambda\left[e^{\alpha \tau(\mathtt{e})}\right]}{e^{\alpha t}}.
    \]
    where
    \[
        \E_\uplambda\left[e^{\alpha \tau(\mathtt{e})}\right]= \sum_{m=0}^{+\infty} \left(\frac{\alpha}{\uplambda(e)}\right)^m
    \]
    Let $\thickbar{\uplambda}_{\min} := \min_{s'\in S'} \E[\uplambda_{s'}]$ and set $\alpha= \thickbar{\uplambda}_{\min}/2$. Thus, by the Dominated Convergence Theorem,
    \[
         \p_\uplambda\big(c(x) \geq t\big) \leq \frac{2^n}{e^{\thickbar{\uplambda}_{\min}t/2}}.
    \]

    Therefore, it suffices to choose $\upbeta> 2\log(2)/\thickbar{\uplambda}_{\min}$ to complete the proof.
\end{proof}

\begin{lemma} \label{lm:richardson.aml}
     Consider the RGTRE defined as above. The there exists $a>0$ such that, for all $x \in \Gr$,
     \[
        a \|x\|_S \leq \E[c(x)].
     \]
\end{lemma}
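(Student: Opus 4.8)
The plan is to bound the first-passage time from below by a deterministic multiple of the word length. The key observation is that, although the passage times are not identically distributed, the random environment is drawn from independent strictly positive variables with well-behaved laws, so in particular each edge weight has a positive median (or more robustly, $\p(\tau(\mathtt{e}) \geq c_0) \geq \delta$ for suitable $c_0, \delta > 0$ uniformly over all edges, since each law $\uplambda_{s'}$ is a bona fide probability law on $(0,+\infty)$, hence $\nu(\uplambda_{s'} \leq K) \to 1$ as $K \to +\infty$, which gives $\p\big(\tau(\mathtt{e}) \geq c_0 \big) = \E_\nu\big[e^{-\uplambda(\mathtt{e}) c_0}\big] \geq \delta$ for small enough $c_0$).

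First I would fix such constants $c_0, \delta > 0$ and, for a path $\upgamma \in \mathscr{P}(e,x)$ of graph length $|\upgamma| = \ell$, introduce the indicator variables $\mathbbm{1}\big(\tau(\mathtt{e}) \geq c_0\big)$ over the edges $\mathtt{e} \in \upgamma$. Because distinct edges carry independent weights, the number $N_{\upgamma} := \#\{\mathtt{e} \in \upgamma : \tau(\mathtt{e}) \geq c_0\}$ stochastically dominates a $\operatorname{Binomial}(\ell, \delta)$ random variable. If $T(\upgamma) < \tfrac{c_0 \delta}{2}\ell$, then necessarily $N_{\upgamma} < \tfrac{\delta}{2}\ell$, and a Chernoff bound gives $\p\big(N_{\upgamma} < \tfrac{\delta}{2}\ell\big) \leq e^{-\mathtt{c}\ell}$ for some $\mathtt{c} = \mathtt{c}(\delta) > 0$.

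Next I would take a union bound over all self-avoiding paths from $e$ to $x$. Writing $n = \|x\|_S$, any path realizing a value $T(\upgamma) < \tfrac{c_0\delta}{2}n$ has graph length $\ell \geq n$, and the number of self-avoiding paths of length $\ell$ starting at $e$ is at most $|S|(|S|-1)^{\ell-1} \leq \mathtt{C}_S(|S|-1)^{\ell}$. Thus
\[
    \p\Big(c(x) < \tfrac{c_0\delta}{2}n\Big) \leq \sum_{\ell \geq n} \mathtt{C}_S (|S|-1)^{\ell} e^{-\mathtt{c}\ell}.
\]
For this sum to converge (and go to $0$ as $n \uparrow +\infty$), one needs $e^{-\mathtt{c}}(|S|-1) < 1$, i.e. $\mathtt{c} > \log(|S|-1)$. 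This is the main obstacle: $\mathtt{c}$ depends on $\delta$ (hence on the environment laws), so it is not automatic that the Chernoff exponent beats the path-counting growth. The fix is to choose $c_0$ small enough — the point being that as $c_0 \downarrow 0$ we can keep $\delta$ close to $1$ (indeed $\p(\tau(\mathtt{e}) \geq c_0) \to 1$ as $c_0 \downarrow 0$, uniformly since we can restrict to a high-probability event $\{\uplambda(\mathtt{e}) \leq K\}$ over all directions and absorb the exceptional environments into the union bound), so the Chernoff rate $\mathtt{c}(\delta)$ can be made as large as desired while $c_0\delta/2$ stays bounded away from $0$. Concretely: pick $\delta$ so close to $1$ that $\mathtt{c}(\delta) > 2\log|S|$, then pick $c_0$ accordingly, and set $a_0 := c_0\delta/2$.

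Finally, with $\p\big(c(x) < a_0 n\big) \leq \tfrac12$ for all $x$ with $\|x\|_S = n$ large enough (uniformly in the direction, by the finiteness of $S'$), I would conclude $\E[c(x)] \geq a_0 n \cdot \p\big(c(x) \geq a_0 n\big) \geq \tfrac{a_0}{2}\|x\|_S$, and for the finitely many $x$ with small word length one shrinks the constant if necessary; taking $a := a_0/2$ (adjusted for small $n$) gives $a\|x\|_S \leq \E[c(x)]$ for all $x \in \Gr$, as claimed. The annealed measure $\p = \int_\Uplambda \p_\uplambda \, d\nu(\uplambda)$ causes no trouble here since all the above probability estimates can be carried out directly under $\p$ using the independence of $\{\tau(\mathtt{e})\}_{\mathtt{e}\in E}$ established in the model's definition.
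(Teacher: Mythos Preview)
Your proposal is correct and follows essentially the same route as the paper: choose a threshold so that $\p\big(\tau(\mathtt{e}) < c_0\big) < 1/(|S|-1)$ uniformly over edges (using that $\p(\tau(\mathtt{e})=0)=0$ and right-continuity of the distribution function), apply a Chernoff/binomial bound on the number of small-weight edges along a path, and take a union bound over self-avoiding paths to get exponential decay of $\p\big(c(x)\le a_0\|x\|_S\big)$. The only cosmetic difference is that the paper restricts attention to paths of length exactly $n=\|x\|_S$ (any path from $e$ to $x$ begins with such a segment, so this suffices) whereas you sum over all lengths $\ell\ge n$; both variants work.
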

\begin{proof}
    It is a well-known fact that, for all $\uplambda \in \Uplambda$ and every $\mathtt{e}\in E$, that $\p_\uplambda\big(\tau(\mathtt{e})=0\big) =0$ and, therefore,  $\p\big(\tau(\mathtt{e})=0\big) =0$. By the right continuity of the cumulative distribution function, one can find $\delta>0$  and $p \geq 0$ such that, for every $\mathtt{e} \in E$,
    \[
        \p\big(\tau(\mathtt{e}) < \delta\big) =p < \frac{1}{|S|-1}.
    \]
    
    We use similar arguments as those employed in the proof of \cref{lm:aml.colors}, we may consider $Y \sim \operatorname{Binomial}(n, 1-p)$ over $P$. Then there exists $\upalpha>0$ such that, for any $\upgamma \in \mathscr{P}_n$,
    \[\p\big(T(\upgamma)\leq \upalpha n \big) \leq P\big(Y \leq \upalpha n /\delta\big) \leq p^n.\]

    It follows that there exists $C>0$ such that, for $\|x\|_S=n$,
    \[
        \p\big(c(x) \leq \alpha \|x\|_S \big) \leq \big\vert \mathscr{P}_n \big\vert \cdot P\big(Y \leq \upalpha n /\delta\big) \leq C \big((|S|-1)p\big)^n.
    \]
    Since $(|S|-1)p <1$, we can complete the proof by following the same steps as in  \cref{lm:aml.colors}.
\end{proof}

It follows from \cref{lm:richardson.all,lm:richardson.aml} that conditions \eqref{all}, \eqref{aml}, and \eqref{aml2} are satisfied. Observe that $\tht\big\vert_N$ acts ergodically on the probability space (see \cref{exmpl:independent.FPP}). 

Therefore, building upon the preceding results, the Limiting Shape Theorems \ref{shape.thm} and \ref{thm:shape.polynomial} apply to the RGTRE with $\Gr=N$ nilpotent with a finite generating set $S \subseteq N \setminus \big( [N,N] \cup \tor N\big)$ or in scenarios where $\Gr'$ is abelian. Additionally, when $\Gr$ is virtually nilpotent and conditions (i) and (ii) from \cref{cor:fpp.virt.nil} are satisfied, the existence of the limiting shape is also assured.

\end{example}

\begin{example}[The Frog Model]\label{ex:frog}

The Frog Model, originally introduced by Alves et al. \cite{alves2002} and previously featured as an example in \cite{telcs1999},  is a discrete-time interacting particle system determined by the intersection of random walks on a graph. In this model, particles, often representing individuals, are distributed across the vertices and they can be in either active (awake) or inactive (sleeping) states. At discrete time steps, active particles perform simple random walks, while inactive ones remain stationary. The activation of an inactive particle occurs when its vertex is visited by an active counterpart, thereby characterizing an awakening process. This straightforward yet potent model serves as a valuable tool for analyzing diverse dynamic processes, such as the spread of information and disease transmission.

In our previous study Coletti and de Lima \cite{coletti2021},  we investigated the Frog Model on finitely generated groups.  We can now extend our findings to virtually nilpotent groups as a consequence of \cref{thm:shape.polynomial}. Let us define the model in detail. The initial configuration of the process at time zero begins with one particle at each vertex and the only active particle lies on the origin $e \in \Gr$.

Set $S_n^x$ to be the simple random walk on $\mathcal{C}(\Gr,S)$ of a particle originally placed at $x \in \Gr$ and let $t(x, y)$
be the first time the random walk $S_n^x$ visits $y\in\Gr$, \textit{i.e.}, it defined the random variable $t(x, y) = \inf\{n \in \N_0: S_n^x=y\}$. Note that $t(x, y) = +\infty$ with strictly positive probability.

The activation time of the particle originally positioned at $x$ is given by the random variable
\[T(x) = \inf\left\{ \sum_{i=1}^m t(x_{i-1},x_i) \colon m\in\N, ~\{x_i\}_{i=1}^m \subseteq\Gr, ~x_0=e \right\}.\]
Observe that $x_{i-1}$ and $x_i$ are not necessarily neighbours. We proved in \cite{coletti2021} that $c(x)=T(x)$ is a subadditive cocycle with respect to the translation $\tht$, which is p.m.p. and an ergodic group action. Futhermore, $\tau^{(e)}(x,sx)= |T(x)-T(sx)|$ is not identically distributes as in the FPP models (see \cref{sec:fpp}).

Due to the discrete time random walks, $T(x) \geq \|x\|_S$ and therefore \eqref{aml2} is immediately satisfied. The at least linear growth in virtually nilpotent group was already investigated in \cite{coletti2021}. Hence, condition \eqref{all} is a consequence of the following result.

\begin{lemma}[Prop. 2.10 of \cite{coletti2021}]
    Let $\Gr$ be a group of polynomial growth rate $D \geq 3$ with a symmetric finite generating set $S \subseteq \Gr \setminus\{e\}$. Then there exists $\mathtt{C},\varkappa>0$ and $\upbeta>1$ such that, for all $x \in \Gr$ and every $t > \upbeta \|x\|_S$, one has
    \[\p\big(T(x) \geq t\big) \leq \mathtt{C} \exp(-t^\varkappa).\]
\end{lemma}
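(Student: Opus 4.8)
The plan is to bound $T(x)$ by the time needed for the activation front to travel along a word geodesic from $e$ to $x$, cut into mesoscopic ``relay'' blocks, and to control the passage time across each block using transience of the simple random walk on $\Gr$ — which holds precisely because $D\ge 3$ — together with the (sub‑)Gaussian heat‑kernel bounds available on any group of polynomial growth. Concretely: fix a large constant $L$ (to be tuned at the very end), and for $x$ with $\|x\|_S=n$ choose a word geodesic $e=x_0\sim\cdots\sim x_n=x$ and relay points $e=z_0,z_1,\dots,z_k=x$ among the $x_i$ with $d_S(z_{j-1},z_j)\le L$ and $k\le\lceil n/L\rceil+1$; put $B_j:=B_S(z_j,2L)$. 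Since the origin frog is already active, $T(x)\le\sigma_k$, where $\sigma_0=0$ and $\sigma_j$ is the first time the relay ball $B_j$ contains an active frog, so it suffices to estimate the increments $\sigma_j-\sigma_{j-1}$.

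The key estimate is a single‑window transmission bound. Using heat‑kernel lower bounds for $\Gr$, one shows there are constants $\mathtt{c}_1,\mathtt{c}_2,p_0>0$ so that, conditionally on $B_{j-1}$ being lit at time $\sigma_{j-1}$, with probability $\ge 1-e^{-\mathtt{c}_1 L^2}$ at least $\mathtt{c}_2 L^2$ of the frogs originally sitting in $B_{j-1}$ are active by time $\sigma_{j-1}+\mathtt{c}_1 L^2$ (this is the typical size of the range of one walk confined to $B_{j-1}$), and each such frog, performing an independent walk over the next $\mathtt{c}_1 L^2$ steps, reaches $B_j$ with probability $\ge p_0$ — here we use that $B_{j-1}$ and $B_j$ have bounded mutual ratio of radius and distance and that, by transience, a macroscopic target at macroscopic distance is hit with probability bounded below (and a constant fraction of that mass falls before the diffusive time $\mathtt{c}_1 L^2$). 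Coupling the frog model from below with a process in which activated frogs are confined to the relay balls makes the increments conditionally independent given the past, and one deduces that $\sigma_j-\sigma_{j-1}$, given the history up to $\sigma_{j-1}$, is stochastically dominated by $\mathtt{c}_3 L^2\,\xi_j$ with $\{\xi_j\}$ i.i.d., $\xi_j\ge 1$, and
\[
\p(\xi_j>m)\ \le\ \mathtt{C}_0\exp(-m^{\varkappa_0})\qquad(m\ge 1)
\]
for some $\mathtt{C}_0,\varkappa_0>0$.

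It then follows that $T(x)\le\sigma_k\le \mathtt{c}_3 L^2\sum_{j=1}^k\xi_j$ with the $\xi_j$ i.i.d.\ as above. A large‑deviation estimate for sums of i.i.d.\ nonnegative variables with stretched‑exponential tails (where the upper tail is governed by the single largest term) gives $\p\big(\sum_{j=1}^k\xi_j>s\big)\le \mathtt{C}_1 k\,e^{-s^{\varkappa_0}}$ once $s\ge 2k\,\E[\xi_1]$. Taking $s=t/(\mathtt{c}_3 L^2)$ and recalling $k\le n/L+2$, we choose first $L$ to be a large enough constant and then $\upbeta>1$ large enough (depending on $L$) that the threshold condition $t\ge 2\mathtt{c}_3 L^2 k\,\E[\xi_1]$ holds for every $t>\upbeta\|x\|_S=\upbeta n$; absorbing the polynomial factor $k$ and all constants, and shrinking the exponent, yields $\p(T(x)\ge t)\le\mathtt{C}\exp(-t^{\varkappa})$ for all $t>\upbeta\|x\|_S$, as claimed.

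The main obstacle is the transmission step: the frog model carries genuinely long‑range dependence, because an activated walk keeps moving and keeps lighting sites far from where it started, so a failed transmission window is not a fresh independent trial — and transience ($D\ge 3$) makes this worse, since a walk that missed $B_j$ may drift off to infinity, degrading the per‑attempt success probability with the attempt number. Controlling this through the confined, block‑independent lower‑bounding process — verifying both that confinement still lights $B_j$ quickly enough and that the resulting increments are conditionally dominated by an i.i.d.\ sequence with the stated stretched‑exponential tail — is where the work lies, and it is also what limits the final rate to $\exp(-t^{\varkappa})$ rather than a genuine exponential.
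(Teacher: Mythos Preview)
The paper does not give its own proof of this lemma: it is stated verbatim as Proposition~2.10 of \cite{coletti2021} and used as a black box to verify condition~\eqref{all} for the Frog Model. There is therefore nothing in this paper to compare your attempt against.

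For what it is worth, your relay-block strategy along a word geodesic is in the spirit of the standard arguments for at-most-linear growth in frog-type models, and the fact that $D\ge 3$ enters through transience of the simple random walk on $\mathcal{C}(\Gr,S)$ is indeed the right mechanism. Your sketch is honest about where the actual work lies: showing that the confined lower-bounding process genuinely renders the block-to-block transmission times conditionally dominated by an i.i.d.\ sequence with stretched-exponential tails is the crux, and you do not carry it out here. If you want to fill this in you should look directly at \cite{coletti2021} (and, for the $\Z^D$ antecedent, \cite{alves2002}), since none of that argument is reproduced in the present paper.
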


Consider now $\Gr$ as a group with polynomial growth rate $D \geq 3$ generated by a symmetric finite set $S \subseteq \Gr\setminus\{e\}$. According to \cref{thm:shape.polynomial} and the preceding results, it can be inferred that the Frog Model on $\mathcal{C}(\Gr,S)$ exhibits a limiting shape when $\big\langle \llbracket S \rrbracket\big\rangle$ generates an abelian group $\Gr'$. This phenomenon can be exemplified by the generalized dihedral group $\Gr = \operatorname{Dih}(N)$ when $N$ is a finitely generated abelian group with polynomial growth rate $D \geq 3$ (see \cref{ex:dihedral}).

\end{example}

\section{Final remarks}

In this work, we have successfully established the Asymptotic Shape Theorem for random subadditive processes on both nilpotent and virtually nilpotent groups. By extending existing results in the literature, we have achieved a comprehensive understanding of the behavior of these processes under more relaxed growth conditions—both at least and at most linear growth. This broadening of applicability enhances the utility of our results in diverse mathematical contexts.

A noteworthy contribution of our work lies in the exploration of FPP models, a crucial class of processes meeting the considered conditions, especially the innerness property. Leveraging this, we were able to derive a corollary for the limiting shape in FPP models, thereby extending the reach of our results to encompass this important and widely studied class of random processes.

Moreover, our presentation of examples generalizes previously known results in shape theorems. These examples illustrate scenarios where the strong restriction of $L^\infty$ cocycles is alleviated, emphasizing the versatility of our established theorems in capturing a broader range of applications.

Looking forward, possible future research may involve the exploration of other types of random variables exhibiting almost subadditive behavior. Additionally, considering point processes on nilpotent Lie groups to define random graphs opens up intriguing possibilities for further investigation. An interesting direction for future works could involve refining our theorems based on the generating set, recognizing the crucial role it plays in certain key aspects. Such refinements could leverage quasi-isometric properties, offering a more nuanced understanding of the interplay between the generating set and the behavior of random subadditive processes.

\bibliographystyle{abbrvnat}
\bibliography{references}

\end{document}